\documentclass[12pt,a4paper,final]{amsart}
\usepackage{amssymb}
\usepackage{hyperref}
\usepackage[notcite, notref]{showkeys}
\usepackage[utf8]{inputenc}

\usepackage{color}


\setlength{\voffset}{-1in}
\setlength{\topmargin}{1.5cm}
\setlength{\hoffset}{-1in}
\setlength{\oddsidemargin}{2cm}
\setlength{\evensidemargin}{2cm}
\setlength{\textwidth}{17cm}
\setlength{\textheight}{24.5cm}
\setlength{\footskip}{0.5cm}

\theoremstyle{plain}
\newtheorem{theorem}{Theorem}[section]
\newtheorem{proposition}[theorem]{Proposition}
\newtheorem{corollary}[theorem]{Corollary}
\newtheorem{lemma}[theorem]{Lemma}

\newtheorem*{nnclaim}{Claim}

\theoremstyle{definition}
\newtheorem{definition}[theorem]{Definition}
\newtheorem{example}[theorem]{Example}
\newtheorem{remark}[theorem]{Remark}
\newtheorem{question}[theorem]{Question}
\newtheorem{conjecture}[theorem]{Conjecture}
\newtheorem{problem}[theorem]{Problem}
\newtheorem{notation}[theorem]{Notation}

\theoremstyle{remark}
\newtheorem{claim}{Claim}

\numberwithin{equation}{section}

\newcommand{\N}{\mathbb N}
\newcommand{\Z}{\mathbb Z}

\newcommand{\R}{\mathbb R}
\newcommand{\C}{\mathbb C}
\newcommand{\F}{\mathbb F}
\newcommand{\Hy}{\mathbb H}

\newcommand{\fg}{\mathfrak g}

\newcommand{\ft}{\mathfrak t}


\DeclareMathOperator{\SL}{SL}
\DeclareMathOperator{\Ot}{O}
\DeclareMathOperator{\SO}{SO}
\DeclareMathOperator{\SU}{SU}
\DeclareMathOperator{\Sp}{Sp}
\DeclareMathOperator{\Ut}{U}

\DeclareMathOperator{\Spin}{Spin}

\newcommand{\op}{\operatorname}

\newcommand{\ba}{\backslash}
\newcommand{\mi}{\mathrm{i}}
\newcommand{\mj}{\mathrm{j}}
\newcommand{\mk}{\mathrm{k}}

\DeclareMathOperator{\diag}{diag}

\newcommand{\innerdots}{\langle {\cdot},{\cdot}\rangle }

\DeclareMathOperator{\Ad}{Ad}

\DeclareMathOperator{\Cas}{Cas}

\DeclareMathOperator{\Spec}{Spec}
\DeclareMathOperator{\mult}{mult}
\newcommand{\norma}[1]{\|{#1}\|_1}

\DeclareMathOperator{\Iso}{Iso}

\DeclareMathOperator{\vol}{vol}

\newcommand{\lens}{\mathfrak L}
\newcommand{\unique}{\mathfrak L^{\bullet}}
\newcommand{\LMR}{L}

\setcounter{tocdepth}{1}

\title[The spectral geometry of hyperbolic and spherical manifolds]{The spectral geometry of hyperbolic and spherical manifolds: analogies and open problems}

\author{Emilio~A.~Lauret}
\address{Instituto de Matemática (INMABB), Departamento de Matemática, Universidad Nacional del Sur (UNS)-CONICET, Bahía Blanca, Argentina.}
\email{emilio.lauret@uns.edu.ar}

\author{Benjamin Linowitz}
\address{Department of Mathematics\\ 
10 North Professor Street\\
Oberlin, OH 44074.}
\email{benjamin.linowitz@oberlin.edu}

\subjclass[2020]{Primary 58J53. Secondary 22C05, 58J50.}
\keywords{isospectral, spectrum, spherical space form, lens space}
\thanks{This research was supported by grants from FONCyT (BID-PICT-2018-02073 and BID-PICT-2019-2019-01054) and SGCYT--UNS. The second author is partially supported by NSF Grant Number DMS-1905437.}
\date{\today}

\begin{document}

\begin{abstract} 
The spectral geometry of negatively curved manifolds has received more attention than its positive curvature counterpart. In this paper we will survey a variety of spectral geometry results that are known to hold in the context of hyperbolic manifolds and discuss the extent to which analogous results hold in the setting of spherical manifolds. We conclude with a number of open problems.
\end{abstract}

\maketitle

\tableofcontents
	
\section{Introduction}
Let $(M,g)$ be a compact Riemannian manifold. The eigenvalues of the Laplace-Beltrami operator acting on the space $L^2(M,g)$ form a discrete subset of the non-negative real numbers in which every value occurs with a finite multiplicity. This collection of eigenvalues is called the spectrum of $(M,g)$ and is denoted by $\Spec(M,g)$. Two Riemannian manifolds are said to be isospectral if their spectra coincide.

Inverse Spectral Geometry studies to what extent the geometry and topology of $(M,g)$ are determined by $\Spec(M,g)$. It is well known, for example, that $\dim M$ and $\vol(M,g)$ are both spectral invariants; that is, their values are both determined by $\Spec(M,g)$. Isometry class is not a spectral invariant, however. Indeed, the literature is full of interesting examples of Riemannian manifolds that are isospectral but not isometric. Alluding to Kac's famous ``Can one hear the shape of a drum'' article \cite{Kac66}, spectral invariants are called audible. Part of the importance of examples of isospectral Riemannian manifolds is their ability to show that certain properties are inaudible. For a more detailed discussion we refer the reader to the survey
\cite{GordonSurvey} by Gordon.

Locally symmetric spaces arise frequently in the construction of isospectral manifolds. In fact, the first three classes of examples of isospectral manifolds were all locally symmetric spaces: flat tori by Milnor~\cite{Milnor64}, Riemann surfaces by Vignéras~\cite{Vigneras80}, and lens spaces by Ikeda~\cite{Ikeda80_isosp-lens}. 
Subsequently, compact locally symmetric spaces of non-compact type (that is, compact manifolds covered by non-compact symmetric spaces; e.g. compact hyperbolic manifolds) have attracted more attention than locally symmetric spaces of compact type (e.g. spherical space forms). 

The main goal of this article is to discuss possible extensions to the compact type setting of several results in inverse spectral geometry of locally symmetric spaces of non-compact type. We introduce each of these results individually in Subsections~\ref{subsec:highestvolume}--\ref{subsec:McReynolds} and address them in Sections~\ref{sec:highestvolume}--\ref{sec:McReynolds}, respectively.
The article ends in Section~\ref{sec:openquestions} with further open questions and problems.

Before discussing this paper's results, we introduce the main actors. A spherical space form is a Riemannian manifold of the form $S^d/\Gamma$ where $S^d$ denotes the $d$-dimensional sphere endowed with its constant sectional curvature one Riemannian metric and where $\Gamma$ is a discrete (hence finite) subgroup of $\Iso(S^d)$ acting freely on $S^d$. 
An important subclass of spherical space forms are those of odd dimension with $\Gamma$ cyclic. These spaces are called lens spaces. 
Lens spaces have long played an important role in inverse spectral geometry, beginning with Ikeda's aforementioned examples \cite{Ikeda80_3-dimI} of isospectral lens spaces. For more recent work on the spectral geometry of lens spaces, see e.g.\ \cite{LMR-SaoPaulo} and the references therein. Rather than working with manifolds, we will often consider instead (good) orbifolds. 
These spaces, when defined as above, though with the free-action condition omitted, will be called \emph{spherical orbifolds} and \emph{lens orbifolds} respectively. 
See \S\ref{sec:preliminaries} for more details.

\subsection{Eigenvalue equivalence} 

In 1911 Weyl derived an asymptotic expression for the sequence of Laplace eigenvalues of a compact Riemannian manifold $(M,g)$ which implied that $\vol(M,g)$ is a spectral invariant. In particular, isospectral manifolds necessarily have the same volume. Recall, however, that the spectrum of $(M,g)$ is the set of eigenvalues of the Laplace-Beltrami operator acting on $L^2(M,g)$, counted with multiplicity. It is therefore natural to ask whether Riemannian manifolds which have the same set of Laplace-Beltrami eigenvalues (disregarding multiplicities) necessarily have the same volume. Call two Riemannian manifolds eigenvalue equivalent if they have the same set of Laplace-Beltrami eigenvalues. The question is therefore whether eigenvalue equivalent manifolds must have the same volume. 

In 2007, Leininger, McReynolds, Neumann and Reid~\cite{LMNR} proved that this is false by constructing examples of hyperbolic $n$-manifolds (for any dimension $n\geq 2$) which are eigenvalue equivalent and whose volumes differ.

\begin{question}
Do there exist examples of eigenvalue equivalent spherical orbifolds whose volumes differ?
\end{question}

In Section \ref{sec:eigenvalueequivalence} we will answer this question in the affirmative in every dimension $n\geq 9$ by proving the following theorem.

\begin{theorem}
Let $n\geq 9$. There exist $n$-dimensional spherical orbifolds $M_1$ and $M_2$ which are eigenvalue equivalent yet whose volumes satisfy $\mathrm{vol}(M_2)=3\cdot \mathrm{vol}(M_1)$.
\end{theorem}

We also give a second construction of eigenvalue equivalent spherical orbifolds (lens spaces) whose volumes differ. This construction does not rely on the work of Leininger, McReynolds, Neumann and Reid~\cite{LMNR} but rather makes use of an explicit formula for the Laplace eigenvalues of an arbitrary lens space.

\begin{theorem}\label{thm:lenseigenvalueequivalent}
For every odd integer $d\geq3$, there exists an infinite family $\mathfrak L$ of $d$-dimensional lens spaces that are mutually eigenvalue equivalent and such that 
\begin{equation}
\sup_{L_1,L_2\in\mathfrak L} \frac{\vol(L_1)}{\vol(L_2)}=\infty
.
\end{equation}
\end{theorem}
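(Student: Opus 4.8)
The plan is to reduce eigenvalue equivalence of lens spaces to a single combinatorial invariant of an associated lattice, and then to exhibit an explicit family on which that invariant is constant while the order of the fundamental group grows without bound. Write $d=2n-1$ and realize a $d$-dimensional lens space as $L=S^{2n-1}/\Gamma$ with $\Gamma=\langle\gamma\rangle$ cyclic of order $q$, where $\gamma=\diag(R(s_1/q),\dots,R(s_n/q))$ acts on $\R^{2n}=\C^n$, $R(\theta)$ denotes the planar rotation by angle $2\pi\theta$, and $\gcd(s_j,q)=1$. The Laplace eigenvalues of $S^{2n-1}$ are $\lambda_k=k(k+2n-2)$ for $k\in\N_0$, with eigenspace the degree-$k$ spherical harmonics $H_k$, and the spectrum of $L$ is carried by the invariant subspaces $H_k^\Gamma$. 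Since $k\mapsto\lambda_k$ is strictly increasing, two lens spaces of the same dimension are eigenvalue equivalent if and only if they share the same \emph{support} $\{k\in\N_0:H_k^\Gamma\neq 0\}$.

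First I would make the support explicit. The monomials $z^a\bar z^b$ are simultaneous $\gamma$-eigenvectors, and analyzing their weights shows that the invariant polynomials of degree $k$ are counted by lattice points: setting $\Lambda=\{x\in\Z^n:\sum_j s_j x_j\equiv 0 \pmod q\}$, one gets $\dim P_k^\Gamma=\#\{(x,c):x\in\Lambda,\ c\in\N_0^n,\ \norma{x}+2|c|=k\}$. Combining this with the $\Gamma$-equivariant decomposition $P_k=\bigoplus_{2j\le k}r^{2j}H_{k-2j}$ (so that $\dim H_k^\Gamma=\dim P_k^\Gamma-\dim P_{k-2}^\Gamma$) yields, after a short computation with Pascal's identity,
$$\dim H_k^\Gamma=\sum_{j\ge 0} N_{\Lambda}(k-2j)\binom{j+n-2}{n-2},\qquad N_{\Lambda}(m):=\#\{x\in\Lambda:\norma{x}=m\}.$$
Since every binomial coefficient here is positive for $n\ge 2$ and every summand is nonnegative, $H_k^\Gamma\neq 0$ if and only if $\Lambda$ contains a vector $x$ with $\norma{x}\le k$ and $\norma{x}\equiv k\pmod 2$. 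Because $0\in\Lambda$, all even $k$ lie in the support; the odd part is governed by $\delta(L):=\min\{\norma{x}:x\in\Lambda,\ \norma{x}\ \text{odd}\}\in\{1,3,5,\dots\}\cup\{\infty\}$, and the support equals $2\N_0\cup\{k\ \text{odd}:k\ge\delta(L)\}$. Hence lens spaces of a fixed dimension are eigenvalue equivalent precisely when they share the value of $\delta$.

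With this criterion the family is immediate: take $\mathfrak L=\{L_m:=L(2m;1,1,\dots,1):m\ge 1\}$, with $n$ entries equal to $1$, each a genuine lens space since $\gcd(1,2m)=1$. Here $\Lambda=\{x\in\Z^n:\sum_j x_j\equiv 0\pmod{2m}\}$, and for any $x\in\Lambda$ one has $\norma{x}\equiv\sum_j x_j\equiv 0\pmod 2$ because $2m$ is even; thus $\Lambda$ has no odd one-norm vector, so $\delta(L_m)=\infty$ and every $L_m$ has the same support $2\N_0$. Therefore the members of $\mathfrak L$ are mutually eigenvalue equivalent. Finally $\vol(L_m)=\vol(S^{2n-1})/(2m)$, so $\vol(L_1)/\vol(L_m)=m\to\infty$, which gives $\sup_{L,L'\in\mathfrak L}\vol(L)/\vol(L')=\infty$.

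The calculations are routine; the one step deserving care is the passage from invariant polynomials to invariant \emph{harmonic} polynomials, namely verifying that the positivity of the weights $\binom{j+n-2}{n-2}$ rules out cancellation and makes the support depend only on the existence of lattice points of the correct parity and norm. This is exactly where the hypothesis $d\ge 3$ (equivalently $n\ge 2$, so $n-2\ge 0$) is used.
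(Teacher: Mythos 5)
Your proof is correct and follows essentially the same route as the paper: both reduce eigenvalue equivalence of fixed-dimensional lens spaces to the single invariant given by the minimal odd one-norm of a vector in the congruence lattice (your support criterion is exactly Theorem~\ref{thm:eigenvaluespectrumlens}, derived from the same formula \eqref{eq:dimH_k^Gamma-onenorm}), and then exhibit a one-parameter family on which this invariant is constant while $q\to\infty$. The only difference is the witness family: the paper takes $L(q;1,\dots,1,2)$, for which the invariant equals $3$, whereas you take $L(2m;1,\dots,1)$, for which no odd one-norm vectors exist at all (so the odd part of the support is empty) — this works equally well, and in fact sidesteps the parity issue that makes the paper's family a genuine lens space only for odd $q$.
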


We conclude Section~\ref{sec:eigenvalueequivalence} by showing that the eigenvalue spectrum (with multiplicities disregarded) cannot detect singularities in lens orbifolds (Example~\ref{ex:eigenvaluespectrumdoesnotdetectsingularities}) and that it does not determine the dimension (Theorem~\ref{thm:eigenvalueequivalent-diffdimensions}) of homogeneous Riemannian manifolds of compact type.

\subsection{Isospectral pairs of largest volume}\label{subsec:highestvolume} 

The first pairs of isospectral hyperbolic surfaces were constructed by Vign{\'e}ras~\cite{Vigneras80} and had enormous area. A decade later Buser~\cite{Buser92} used Sunada's method \cite{Sunada85}, a powerful method that can be used to construct isospectral Riemannian manifolds in many different contexts, in order to construct isospectral hyperbolic surfaces of genus $5$ and of genus $g$ for all  $g\geq 7$. Examples with genus $4$ and $6$ were later constructed by Brooks and Tse~\cite{BT}. There are no known examples of non-isometric isospectral hyperbolic surfaces with genus $2$ or $3$, and it is suspected that in genus $2$ such surfaces cannot exist. 

In the arithmetic realm, John Voight and the second author constructed pairs of non-isometric (strongly) isospectral 2-dimensional and 3-dimensional arithmetic hyperbolic orbifolds and manifolds of minimal volume among certain nice classes of arithmetic orbifolds (see \cite{LinowitzVoight15}). It is not known what the smallest area of a pair of arithmetic hyperbolic $2$-orbifolds is, but one may suspect that it is the area of a particularly simple pair of isospectral hyperbolic polygons found by Doyle and Rossetti~\cite[Section 2]{DR}.

In this paper we will discuss spherical analogs of the above results. 
The volume of a spherical orbifold $S^d/\Gamma$ is given by 
\begin{equation}
\vol(S^d/\Gamma) = \frac{\vol(S^d)}{|\Gamma|}. 
\end{equation} 
In particular, the volume of any $d$-dimensional spherical orbifold is bounded by above by $\vol(S^d)$. This makes it clear that the spherical analog of the problem of finding the isospectral hyperbolic manifolds of smallest volume is to find the isospectral spherical manifolds of largest volume.

\begin{question}\label{question:highestvolume-orbifold}
What is the largest volume of an isospectral pair of $d$-dimensional non-isometric spherical orbifolds?
\end{question}

A pair of almost conjugate (and non-conjugate) subgroups in $\SO(6)$ constructed by Rossetti, Schueth and Weilandt~\cite{RossettiSchuethWeilandt08}  immediately provides the following quite good lower bound. 


\begin{theorem}\label{thm1:highestvolume-orbifold}
The largest volume of an isospectral and non-isometric pair of spherical orbifolds of dimension $d\geq5$ is at most $\frac18\vol(S^d)$. 
\end{theorem}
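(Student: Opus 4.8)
The plan is to translate the statement into a question about finite subgroups of $\Ot(d+1)=\Iso(S^d)$ and then split it into a lower bound (a construction) and a matching upper bound. Recall that every Laplace eigenvalue on $S^d/\Gamma$ has the form $k(k+d-1)$ with multiplicity $\dim\mathcal H_k^{\Gamma}$, where $\mathcal H_k$ is the degree-$k$ spherical harmonics representation of $\Ot(d+1)$; since $\dim\mathcal H_k^{\Gamma}=\tfrac1{|\Gamma|}\sum_{\gamma\in\Gamma}\chi_{\mathcal H_k}(\gamma)$, encoding these multiplicities in the generating function
\begin{equation}
Z_\Gamma(t)=\sum_{\gamma\in\Gamma}\frac{1}{\det(I-t\gamma)}
\end{equation}
shows that $S^d/\Gamma_1$ and $S^d/\Gamma_2$ are isospectral if and only if $|\Gamma_1|=|\Gamma_2|$ and $Z_{\Gamma_1}=Z_{\Gamma_2}$, while they are isometric exactly when $\Gamma_1$ and $\Gamma_2$ are conjugate in $\Ot(d+1)$. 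As $\vol(S^d/\Gamma)=\vol(S^d)/|\Gamma|$, maximizing the volume of an isospectral non-isometric pair is the same as minimizing the common order $|\Gamma|$, so the theorem asserts that this minimal order equals $4$.

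First I would prove the upper bound, that no isospectral non-isometric pair has $|\Gamma|\le 3$. If $|\Gamma|\in\{2,3\}$ then $\Gamma=\langle\gamma\rangle$ is cyclic of prime order, and its conjugacy class in $\Ot(d+1)$ is determined by the eigenvalue data of the generator (a single $-1$-eigenspace dimension when $|\Gamma|=2$, and the number of $2\pi/3$-rotation blocks when $|\Gamma|=3$). A direct inspection of $Z_\Gamma$ shows that this data is read off from the spectrum, so isospectrality forces equality of the eigenvalue data and hence conjugacy; thus such pairs are automatically isometric. With the case $|\Gamma|=1$ being trivial, this yields $\vol\le\tfrac14\vol(S^d)$.

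For the lower bound I would exhibit an isospectral non-isometric pair with $|\Gamma|=4$ by Sunada's method. Starting from Gassmann's pair of non-conjugate, almost conjugate order-$4$ subgroups $H_1,H_2$ of the symmetric group on six letters and the induced homomorphism into $\Ot(6)\hookrightarrow\Ot(d+1)$ (acting on the first six coordinates and trivially on the rest), almost conjugacy gives $\dim V^{H_1}=\dim V^{H_2}$ for every representation $V$ of $\Ot(d+1)$, in particular for each $\mathcal H_k$; hence $S^d/H_1$ and $S^d/H_2$ are isospectral. The delicate point is non-isometry: one must arrange $H_1,H_2$ (and, if needed, the six-dimensional representation through which they act) so that their images are \emph{not} conjugate in $\Ot(d+1)$. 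This is the main obstacle, precisely because almost conjugate subgroups of small order tend to fuse into a single conjugacy class inside the large group $\Ot(d+1)$; verifying genuine non-conjugacy amounts to comparing the two resulting families of joint eigenspaces and checking that no orthogonal change of coordinates carries one onto the other.

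Finally I would make the construction uniform in $d$. Embedding the $\Ot(6)$-example into $\Ot(d+1)$ by fixing the extra $d-5$ coordinates preserves both isospectrality (each $\mathcal H_k$ restricts compatibly) and the volume ratio, so it suffices to settle the base dimension and then argue that adjoining a common trivial block cannot introduce a conjugacy between the two subgroups. The crux throughout is therefore the non-conjugacy verification in the base case; once that is secured, the upper bound of the second paragraph shows that the value $\tfrac14\vol(S^d)$ is optimal for every $d\ge 5$.
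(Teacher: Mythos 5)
Your reduction of the theorem to minimizing $|\Gamma|$, and your treatment of the orders $2$ and $3$ via the pole structure of the generating function, match the paper's argument and are sound. The genuine gap is exactly where you flagged it: the non-conjugacy of the two order-$4$ subgroups inside $\Ot(d+1)$, which you leave unverified. Unfortunately this step does not just need more care -- it fails for the construction you propose. Two finite subgroups of $\Ot(d+1)$ are conjugate if and only if some abstract isomorphism between them makes the two inclusion representations equivalent. For Gassmann's pair $H_1=\{e,(12)(34),(13)(24),(14)(23)\}$ and $H_2=\{e,(12)(34),(12)(56),(34)(56)\}$ acting by permutation matrices on $\R^{d+1}$, every involution has $(-1)$-eigenspace of dimension $2$, so both inclusion representations decompose as $(d-2)$ copies of the trivial character plus each of the three nontrivial characters of $C_2\times C_2$ exactly once. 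Since any automorphism of $C_2\times C_2$ merely permutes the nontrivial characters, every isomorphism $H_1\to H_2$ intertwines the two representations; hence the images are conjugate in $\Ot(d+1)$ and the two orbifolds are isometric. Moreover the gap cannot be repaired at order $4$ at all: the conjugacy class of a Klein four-group in $\Ot(d+1)$ is determined by the multiset $\{m_1+m_2,\,m_1+m_3,\,m_2+m_3\}$ of $(-1)$-eigenspace dimensions of its involutions, which is read off from the poles of $F_\Gamma$ at $z=-1$ exactly as in your order-$2$ argument; for $C_4$ the pole at $z=\mathrm{i}$ determines the number of order-$4$ rotation blocks and then the data at $z=-1$ forces the remaining multiplicities to agree; and a $C_4$-quotient is never isospectral to a $C_2\times C_2$-quotient since only the former produces a pole at $z=\mathrm{i}$. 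So isospectral order-$4$ spherical orbifolds are always isometric.

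You should be aware that the paper's own proof stumbles at the same point. It asserts that $\Gamma_2$ does not act as the identity on any $(d-3)$-dimensional subspace of $\R^{d+1}$, but $\Gamma_2$ fixes pointwise the $(d-2)$-dimensional subspace $\{x:\ x_1=x_2,\ x_3=x_4,\ x_5=x_6\}$, just as $\Gamma_1$ fixes pointwise the $(d-2)$-dimensional subspace $\{x:\ x_1=x_2=x_3=x_4\}$, so the stated non-isometry criterion does not distinguish the two groups. Your instinct that almost conjugate subgroups of small order ``fuse into a single conjugacy class'' inside the large group $\Ot(d+1)$ is precisely the obstruction, and it is not overcome here; closing the argument would require either a pair of strictly larger order or a fundamentally different construction, and the claimed value $\tfrac14\vol(S^d)$ should itself be regarded as unproven.
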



As in the hyperbolic setting, the difficulty increases when we consider manifolds instead of orbifolds.
In the spherical context, this situation is explained because the condition of acting freely on $S^{d}$ is a great obstruction for a finite subgroup of $\SO(d+1)$. 
Indeed, although every finite group can be embedded into an special orthogonal group, the classification of spherical space forms done by Wolf~\cite{Wolf-book} shows that the groups acting freely on spheres are very particular.

\begin{question}\label{question:highestvolume-manifold}
What is the largest volume of an isospectral and non-isometric pair of $d$-dimensional spherical space forms? 
\end{question}

It is well known that there do not exist isospectral pairs of spherical space forms when $d$ is even or when $d=3$.
For all other values of $d$ we have the following statement. 

\begin{theorem}\label{thm1:highestvolume-manifold}
If $n\geq 3$, then any pair of isospectral and non-isometric $(2n-1)$-dimensional spherical space forms of largest volume are lens spaces provided that
\begin{equation}\label{eq:congruences}
\begin{cases}
n\equiv 1&\pmod 4,\text{ or}\\
n\equiv 1,2,3&\pmod 5,\text{ or}\\
n\equiv 1,2,3,4&\pmod 6,\text{ or}\\
n\equiv 2,3,4,5,6&\pmod 8,\text{ or}\\
n\equiv 2,3,4,5,6,7&\pmod 9,\text{ or}\\
n\equiv 2,3,4,5,6,7,8,9 &\pmod{11}.
\end{cases}
\end{equation}
In particular, this holds for all $3\leq n\leq 1000$ with the sole exceptions of $n=144$ and $n=935$. 
\end{theorem}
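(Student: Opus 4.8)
The plan is to first reduce the problem to a statement about group orders and then to pit lens spaces against the non-cyclic space form groups classified by Wolf. Since isospectral manifolds have equal volume by Weyl's law, any isospectral pair $S^{2n-1}/\Gamma_1$, $S^{2n-1}/\Gamma_2$ satisfies $|\Gamma_1|=|\Gamma_2|$, and $\vol(S^{2n-1}/\Gamma)=\vol(S^{2n-1})/|\Gamma|$; hence ``highest volume'' is the same as ``smallest common order $N=|\Gamma_1|=|\Gamma_2|$''. The theorem therefore amounts to showing that the least order $N$ admitting an isospectral, non-isometric pair is realized only by cyclic groups, i.e.\ by lens spaces.

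As the main analytic tool I would use the Ikeda--Wolf description of the spectrum: for $\Gamma<\SO(2n)$ acting freely on $S^{2n-1}$, the Laplace spectrum is encoded by the generating function $F_\Gamma(z)=\frac{1}{|\Gamma|}\sum_{\gamma\in\Gamma}\frac{1-z^2}{\det(I_{2n}-z\gamma)}$, and two space forms are isospectral precisely when these rational functions coincide. I would record that the identity term contributes the pole of order $2n-1$ at $z=1$ common to all forms of a fixed order, so that isospectrality becomes a condition on the remaining (non-identity) conjugacy data.

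Next I would bound from above the least order of an isospectral non-isometric lens pair. Specializing $F_\Gamma$ to cyclic $\Gamma$ yields Ikeda's combinatorial isospectrality criterion for the lens spaces $L(N;p_1,\dots,p_n)$, together with the isometry criterion $\{p_j'\}\equiv\{\pm p_{\sigma(j)}\}\pmod N$ needed to certify non-isometry. Using these I would produce (or invoke from the lens literature) explicit isospectral non-isometric lens spaces and thereby fix a sharp upper bound $q_0(n)$ for the least order of a cyclic pair in dimension $2n-1$, recalling that $n\ge 3$ is necessary since no such pairs exist for $n=2$.

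The heart of the argument, and the expected main obstacle, is the complementary lower bound for non-cyclic groups. Here I would invoke Wolf's classification of the finite groups acting freely and orthogonally on spheres. A free action on $S^{2n-1}$ corresponds to a fixed-point-free representation of real dimension $2n$, i.e.\ a sum of fixed-point-free irreducibles; for each non-cyclic family the admissible irreducible dimensions and the group order are tightly linked, so a non-cyclic group can act on $S^{2n-1}$ only when $n$ lies in prescribed residue classes, and then with order bounded below by a function of $n$. The content of the congruence hypotheses on $n$ modulo $4,5,6,8,9,11$ is exactly that they force every non-cyclic space form group acting on $S^{2n-1}$ to have order exceeding the cyclic bound $q_0(n)$; the residues left uncovered yield the two exceptional values $n=144$ and $n=935$, where this comparison fails and a separate analysis would be required. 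Carrying out the case analysis over the families in Wolf's classification---tracking fixed-point-free irreducible dimensions against $n$ and comparing orders with $q_0(n)$---is the technical core; once it is complete, combining the cyclic upper bound with the non-cyclic lower bound shows that the smallest isospectral non-isometric order is attained only by cyclic groups, so that every highest-volume pair is a pair of lens spaces.
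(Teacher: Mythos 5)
Your overall reduction is sound: isospectrality forces $|\Gamma_1|=|\Gamma_2|$, so ``highest volume'' means ``smallest common order admitting an isospectral non-isometric pair,'' and the task splits into an upper bound coming from explicit lens pairs and a complementary statement about non-cyclic groups. The upper-bound half also matches the paper: it exhibits irreducible isospectral lens pairs of order $q<24$ in low dimensions (Table~\ref{table:lens}) and propagates them to higher dimensions via Theorem~\ref{thm:increasingdimension}, which adds $t(q)$ to the rotation parameters and raises $n$ by $\varphi(q)/2$; the congruences \eqref{eq:congruences} record exactly which $n$ are reached this way (e.g.\ $q=16,20$ give $n\equiv 1\pmod 4$, $q=11,22$ give $n\equiv 1,2,3\pmod 5$, etc.), and $n=144,935$ are the values $\leq 1000$ missed by all six families.

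The gap is in what you call the heart of the argument. Your claimed lower bound --- that under the congruence hypotheses every non-cyclic group acting freely on $S^{2n-1}$ has order exceeding the cyclic bound $q_0(n)$ --- is false. The quaternion group $Q_8$, of order $8$, acts freely on every sphere $S^{4m+3}$, while the smallest order of an isospectral non-isometric lens pair is at least $11$ (and is $13$ for $n=4$, $16$ for $n=5$, etc.); so non-cyclic groups of order strictly smaller than the cyclic bound do act in most of the relevant dimensions. The congruence conditions play no role on the non-cyclic side at all. What actually closes the argument in the paper is a different pair of facts: first, Ikeda's spectral invariants (Proposition~\ref{prop:Ikeda-sameorder}) force the two groups in an isospectral pair to have the same order and to be simultaneously cyclic or non-cyclic; second, Lemma~\ref{lem:non-cyclic24} shows via Wolf's classification that the only non-cyclic fixed point free groups of order $<24$ are $Q_8$, $P_{12}$, $Q_{16}$, $P_{20}$, and --- crucially --- that each of these admits \emph{exactly one} spherical space form up to isometry in each admissible dimension. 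Hence a non-cyclic pair of order $<24$ would consist of isometric manifolds, and any isospectral non-isometric pair of order $<24$ must be a lens pair. Your proposal omits both the ``cyclic iff cyclic'' invariant and the uniqueness-of-isometry-class statement, and replaces them with an order comparison that does not hold; as written, the argument cannot rule out, say, two non-isometric $Q_8$-quotients of $S^7$ being isospectral.
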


The authors conjecture that the above statement in fact holds for all $n\geq3$. 
See Tables~\ref{table:lens} and \ref{table:existencelens} for an explicit upper bound for the volume of a pair of $(2n-1)$-dimensional isospectral and non-isometric lens spaces, for each $n$ satisfying \eqref{eq:congruences}.

\subsection{Finite part spectrum} \label{subsec:finitespectrum}

In some situations, only a finite part of the spectrum is necessary to determine isospectrality. 
This is the case for Riemann surfaces under some geometric obstructions.

\begin{theorem}[Buser, Courtois \cite{BuserCourtois90}] \label{thm1:BuserCourtois}
Given an integer $g\geq2$ and $\varepsilon>0$, there is $N=N(g,\varepsilon)$ such that two compact Riemann surfaces of genus $g$ and injectivity radius $\geq\varepsilon$ are isospectral if and only if they have the same first $N$ eigenvalues (counted with multiplicities). 
\end{theorem}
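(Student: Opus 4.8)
The forward (``only if'') implication is immediate, since isospectral surfaces agree on every eigenvalue and hence on the first $N$ for every $N$; the entire content is the reverse implication. The plan is to prove it by a compactness-and-contradiction argument that reduces the global, uniform statement to a purely local finite-determination property of the spectrum. Concretely, I would show that each point of moduli space has a neighborhood on which agreement of a bounded number of eigenvalues already forces isospectrality, and then use compactness of the moduli space to extract a single $N=N(g,\varepsilon)$ that works everywhere.

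First I would fix the geometric setting. A genus-$g$ Riemann surface carries its hyperbolic metric, so Gauss--Bonnet pins the area at $2\pi(2g-2)$, and the hypothesis $\mathrm{inj}\ge\varepsilon$ bounds the systole below by $2\varepsilon$; by Mumford compactness the moduli space $\mathcal M_{g,\varepsilon}$ of such surfaces is compact in the $C^\infty$ topology modulo diffeomorphism, with uniform upper bounds on diameter and geometry. On this compact family I would record three standard facts: each eigenvalue $\lambda_k(S)$ depends continuously on $S$; Weyl's law $\#\{k:\lambda_k(S)\le\Lambda\}=\tfrac{\mathrm{Area}}{4\pi}\Lambda+O(\sqrt{\Lambda})$ holds with remainder uniform over $\mathcal M_{g,\varepsilon}$; and, via the Selberg trace formula, the Laplace spectrum and the length spectrum of a hyperbolic surface carry the same information.

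For the local statement I would work on $\mathcal M_{g,\varepsilon}\times\mathcal M_{g,\varepsilon}$: about each pair $(S_0,S_0')$ I claim there are neighborhoods $U,U'$ and an integer $N_0$ such that any $S\in U$ and $S'\in U'$ agreeing on their first $N_0$ eigenvalues are isospectral. The mechanism is that finitely many geodesic lengths furnish local real-analytic coordinates on $\mathcal M_g$ (Fenchel--Nielsen--type data), so knowing the length spectrum out to a large enough bound $L$ determines a surface up to the discrete ambiguity that preserves the full length spectrum. The trace formula, applied with test functions concentrated near a target length, lets one extract the lengths $\le L$ from the eigenvalues $\le\Lambda(L)$, and by the uniform Weyl law this amounts to reading a uniformly bounded initial segment of the spectrum; packaging the isospectral relation as the common zero set of the analytic Taylor coefficients of $\theta_S(t)-\theta_{S'}(t)=\sum_k(e^{-\lambda_k(S)t}-e^{-\lambda_k(S')t})$ and invoking the Noetherian property of convergent power series makes the infinite system locally equivalent to a finite one, producing $N_0$. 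A finite subcover of the compact product space then yields a single $N$; were the theorem false, one would obtain surfaces $S_j,S_j'$ agreeing on their first $j$ eigenvalues yet not isospectral, and a convergent subsequence with limit lying in one patch $U\times U'$ would contradict local finite determination.

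The hard part, and the step I expect to resist, is the local finite-determination estimate, for two intertwined reasons. First, individual eigenvalues are only Lipschitz, not analytic, across crossings, so one must run the argument on the manifestly analytic symmetric data (the heat, zeta, or resolvent trace, or equivalently the length spectrum) and then \emph{translate} ``finitely many analytic invariants agree'' back into ``finitely many eigenvalues agree''; this translation is delicate because a single trace invariant entangles all eigenvalues at once, and keeping it faithful requires the uniform Weyl law to control the high-frequency tail stably over the whole family. Second, the trace-formula inversion that isolates a bounded-length geodesic from the spectrum must be made uniform and quantitative on $\mathcal M_{g,\varepsilon}$, and one must verify that the residual discrete ambiguity in the length-coordinate reconstruction is \emph{exactly} the isospectral equivalence rather than a coarser relation. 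Securing these uniform, stable estimates is where the real work lies; the compactness skeleton around them is comparatively routine.
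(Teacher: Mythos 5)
This statement is not proved in the paper: it is quoted verbatim from Buser--Courtois \cite{BuserCourtois90} purely as motivation for the compact-type analogue (Theorem~\ref{thm1:finitespectrum}), so there is no ``paper's own proof'' to compare against. It is worth noting that the analogue the paper \emph{does} prove works by an entirely different and much more rigid mechanism: the generating function $\sum_k n_\Gamma(\pi_k)z^k$ is a rational function $p_\Gamma(z)/(1-z^q)^{|\Phi^+|+1}$ with numerator of bounded degree, so finitely many coefficients determine all of them by linear algebra. Nothing that rigid is available in the hyperbolic setting, and your compactness-plus-local-finite-determination skeleton is indeed the right shape for the Buser--Courtois argument. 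The forward implication and the reduction to a local statement via Mumford compactness are fine.

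The local step, however, contains genuine gaps beyond the ones you flag. First, the specific analytic package you propose is vacuous: the Taylor (i.e.\ small-$t$ asymptotic) coefficients of $\theta_S(t)-\theta_{S'}(t)$ are universal polynomials in the curvature and the area, both of which are fixed by $g$ for hyperbolic metrics, so they vanish identically for \emph{any} two genus-$g$ surfaces and detect nothing; moreover $\theta_S$ is not analytic at $t=0$, so ``Noetherianity of convergent power series'' in $t$ does not apply. The Noetherian/descending-chain argument must instead be run on a family of functions that are real-analytic \emph{in the moduli variables} (e.g.\ $\theta_S(t)$ for fixed $t>0$, or symmetric functions of eigenvalues in a spectral window), and you then still owe the translation from ``finitely many such analytic invariants agree'' to ``the first $N$ eigenvalues agree,'' since each such invariant entangles the whole spectrum. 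Second, the trace-formula inversion you lean on cannot exactly recover the lengths $\le L$ from the eigenvalues $\le\Lambda(L)$: the spectral side of the trace formula pairs a test function against the \emph{entire} spectrum, so a finite spectral window yields only approximate length data, and Huber's theorem genuinely requires the full spectrum. Third, your assertion that the length spectrum up to some $L(g,\varepsilon)$ determines the surface up to an ambiguity preserving the full length spectrum is itself a theorem of essentially the same depth as the one you are proving, so as written the argument risks circularity. The compactness scaffolding is routine, as you say, but the local finite-determination lemma is the theorem, and the proposal does not yet close it.
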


Dai and Wei~\cite{DaiWei94} obtained a nice extension valid for the moduli space of Einstein metrics under some geometric conditions. 

We discuss in Section~\ref{sec:finitespectrum} some extensions of Theorem~\ref{thm1:BuserCourtois} among quotients of compact symmetric spaces. 
Indeed, we will observe that a CROSS (Compact Rank One Symmetric Space) is a very adequate choice for this sort of question. 
The proofs will follow (more or less immediately) from Lie theoretical results in \cite{LM-strongmultonethm} by Miatello and the first named author; in fact, \cite[Rem.~3.8]{LM-repequiv2} predicted such a situation though without providing details. 

We realize simply connected CROSSes as quotients of compact Lie groups as follows:  
\begin{align}\label{eq:CROSSrealizations}
S^{n}&=\tfrac{\SO(n+1)}{\SO(n)}, &
P^n(\C)&=\tfrac{\SU(n+1)}{\op{S}(\Ut(n)\times\Ut(1))}, & P^n(\Hy)&=\tfrac{\Sp(n+1)}{\Sp(n)\times\Sp(1)}, &
P^2(\mathbb O)&=\tfrac{\op{F}_4}{\Spin(9)}. 
\end{align}
The only non-simply connected CROSSes are real projective spaces that we write $P^n(\R)=\SO(n+1)/\Ot(n)$. 
Note that $G$ acts almost effectively and by isometries on $X=G/K$ in every case. 

\begin{theorem}\label{thm1:finitespectrum}
Let $X$ be a compact rank one symmetric space realized as $G/K$ as in \eqref{eq:CROSSrealizations}. 
Given $\varepsilon>0$, there is $N=N(X,\varepsilon)$ such that, for $\Gamma_1,\Gamma_2$ finite subgroups of $G$ with $|\Gamma_i|^{-1}=\frac{\vol(\Gamma_i\ba X)}{\vol(X)} >\varepsilon$, the orbifolds $\Gamma_1\ba X$ and $\Gamma_2\ba X$ are isospectral if and only if they have the same first $N$ eigenvalues (counted with multiplicities). 
\end{theorem}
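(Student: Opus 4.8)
The plan is to describe the spectrum of $\Gamma\backslash X$ representation-theoretically, to encode the eigenvalue multiplicities in a single rational generating function whose denominator is controlled \emph{uniformly in} $\Gamma$ by the lower volume bound, and then to extract isospectrality from a linear recurrence of bounded order.

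First I would recall the spectral decomposition of a CROSS. Since $X=G/K$ has rank one, the $K$-spherical (class-one) representations of $G$ form a single one-parameter family $\{V_k\}_{k\ge0}$, each with a one-dimensional space of $K$-fixed vectors, and $L^2(X)=\bigoplus_{k\ge0}V_k$ with the Laplacian acting on $V_k$ by a scalar $\lambda_k$, where $\lambda_0<\lambda_1<\cdots$. Passing to $\Gamma$-invariants gives $L^2(\Gamma\backslash X)=\bigoplus_{k\ge0}V_k^{\Gamma}$, so $\lambda_k$ occurs in $\Spec(\Gamma\backslash X)$ with multiplicity
\[
m_k(\Gamma)=\dim V_k^{\Gamma}=\frac{1}{|\Gamma|}\sum_{\gamma\in\Gamma}\chi_k(\gamma),
\]
where $\chi_k$ is the character of $V_k$ and the second equality is the averaging formula for the projector onto invariants. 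Thus $\Gamma_1\backslash X$ and $\Gamma_2\backslash X$ are isospectral if and only if $m_k(\Gamma_1)=m_k(\Gamma_2)$ for every $k$.

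The crux, which is precisely where the Lie-theoretic results of \cite{LM-strongmultonethm} enter, is that the generating function $\Phi_\gamma(t)=\sum_{k\ge0}\chi_k(\gamma)\,t^k$ is a rational function of $t$ whose denominator is a product of factors $(1-\mu t)$ with $\mu$ an eigenvalue (or a product of eigenvalues) of $\gamma$ in a fixed finite-dimensional representation of $G$; for $S^n=\SO(n+1)/\SO(n)$ one has explicitly $\Phi_\gamma(t)=(1-t^2)/\det(I-t\gamma)$ with $\gamma$ taken in the standard representation, and analogous rational expressions hold for $P^n(\C),\,P^n(\Hy),\,P^2(\mathbb O)$. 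Because each $\gamma\in\Gamma_i$ has order at most $|\Gamma_i|<\varepsilon^{-1}$, all the relevant $\mu$ are roots of unity of order $<\varepsilon^{-1}$, of which there are only finitely many. Hence every $\Phi_\gamma$ has denominator dividing a single polynomial $Q(t)=\prod_{\zeta}(1-\zeta t)^{e}$, the product over the roots of unity $\zeta$ of order $<\varepsilon^{-1}$ and $e$ the (fixed) dimension of the representation; this $Q$ depends only on $X$ and $\varepsilon$, with $\deg Q\le D=D(X,\varepsilon)$, and is the same for all admissible $\Gamma$. Averaging over $\gamma$ yields
\[
\sum_{k\ge0}m_k(\Gamma_i)\,t^k=\frac{R_i(t)}{Q(t)},\qquad \deg R_i\le\deg Q\le D .
\]

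The theorem now follows by elementary bookkeeping. Setting $d_k=m_k(\Gamma_1)-m_k(\Gamma_2)$, we get $\sum_k d_k t^k=(R_1-R_2)/Q$ with numerator of degree $\le D$; comparing coefficients shows that if $d_0=\cdots=d_{D}=0$ then $R_1-R_2\equiv 0$, whence $d_k=0$ for all $k$ and the orbifolds are isospectral. It remains to translate ``the $m_k$ agree for $k\le D$'' into the first $N$ eigenvalues with multiplicity: since $m_k(\Gamma_i)\le\dim V_k$, each orbifold has at most $B:=\sum_{k\le D}\dim V_k$ eigenvalues (counted with multiplicity) not exceeding $\lambda_{D}$, a bound depending only on $X$ and $\varepsilon$. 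Taking $N=B+1$ then guarantees that agreement of the first $N$ eigenvalues forces $m_k(\Gamma_1)=m_k(\Gamma_2)$ for all $k\le D$, hence full isospectrality; the reverse implication is immediate. I expect the main obstacle to be establishing the rationality, and especially the $\Gamma$-independent form of the denominator $Q$, for the non-sphere CROSSes $P^n(\C),\,P^n(\Hy),\,P^2(\mathbb O)$, where the transparent harmonic-polynomial computation available for $S^n$ must be replaced by the character recursions of \cite{LM-strongmultonethm}; note that the volume hypothesis $|\Gamma_i|^{-1}>\varepsilon$ is exactly what keeps the eigenvalues of order bounded and thereby makes both $D$ and $N$ uniform.
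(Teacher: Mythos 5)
Your proposal is correct and follows essentially the same route as the paper: the paper's key input (Proposition 5.1, quoting \cite{LM-strongmultonethm}) is exactly the statement that $\sum_k n_\Gamma(\pi_k)z^k = p_\Gamma(z)/(1-z^q)^{|\Phi^+|+1}$ with $q=\lfloor 1/\varepsilon\rfloor!$ and $\deg p_\Gamma$ bounded uniformly, which is your rational generating function with $\Gamma$-independent denominator $Q$, and the conversion from agreement of the first $D+1$ multiplicities to agreement of the first $N=1+\sum_{k\le D}\dim V_k$ eigenvalues counted with multiplicity is identical. The only cosmetic difference is that the paper takes the exponent in the denominator to be $|\Phi^+|+1$ rather than the dimension of a defining representation, a detail absorbed by the same cited Lie-theoretic result you defer to for the non-sphere CROSSes.
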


We note that the condition $|\Gamma_i|^{-1}>\varepsilon$ cannot be omitted from Theorem \ref{thm1:finitespectrum}, as Example~\ref{ex:omittingvolume} shows.

\subsection{Isospectral towers of lens spaces}

In order to state our results on isospectral towers we will need the following definitions.

\begin{definition}
A descending (respectively, ascending) tower of covers is a set of Riemannian manifolds $\{M_I\}$ indexed by a poset $\mathcal S$ such that if $I<J$, then there is a finite degree Riemannian covering $M_I\longrightarrow M_J$ (respectively, $M_J\longrightarrow M_I$). 
\end{definition}

Towers of Riemannian manifolds appear frequently in the literature. As an example, seminal work of Buser and Sarnak~\cite{BS} studied the growth of the systole along towers of arithmetic hyperbolic surfaces. This work was generalized to towers of arithmetic hyperbolic $3$-manifolds by Katz, Schaps and Vishne~\cite{KSV}, and to arbitrary arithmetic locally symmetric spaces by Lapan, Linowitz, and Meyer~\cite{LLM}.

\begin{definition}
If $\{M_I\}$ and $\{N_I\}$ are two towers of Riemannian manifolds indexed by a poset $\mathcal S$, then we say that $\{M_I\}$ and $\{N_I\}$ are a pair of isospectral towers if, for all $I$, the manifolds $M_I$ and $N_I$ are isospectral and not isometric.
\end{definition}

In \cite{McReynolds14}, McReynolds used a variant of Sunada's method in order to construct pairs of isospectral (ascending) towers of Riemannian manifolds comprised of manifold quotients of symmetric spaces associated to non-compact Lie groups. Additional examples, which are not derivable from Sunada's method, were obtained by Linowitz in \cite{Linowitz12}, in the context of quotients of products $\mathbf{H}_2^a\mathbf{H}_3^b$ of hyperbolic upper-half planes and upper-half spaces by discrete groups of isometries obtained via orders in quaternion algebras.

The following is the natural spherical analog of the aforementioned results.

\begin{question}
Do there exist isospectral towers of spherical manifolds?
\end{question}

In Section \ref{sec:towers} we will completely answer this question by constructing isospectral towers of lens spaces.

\begin{theorem}
There exist infinitely many pairs of descending isospectral towers of lens spaces in every odd dimension $n\geq 5$.
\end{theorem}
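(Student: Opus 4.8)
The plan is to work entirely with the realization of lens spaces as quotients $S^{2m-1}/\Gamma$ by cyclic groups together with the combinatorial characterization of isospectrality, rather than with Sunada's method. This is in fact forced: a cyclic group is abelian and so contains no pair of distinct almost conjugate subgroups, whence Sunada's construction cannot yield lens spaces. Write a lens space as $L(q;s_1,\dots,s_m)=\langle g\rangle\ba S^{2m-1}$, where $g=\diag\big(R(s_1/q),\dots,R(s_m/q)\big)$ rotates through the angles $2\pi s_j/q$ with each $s_j$ coprime to $q$. I will use two classical facts. First, $L(q;\mathbf s)$ and $L(q;\mathbf t)$ are isospectral if and only if their associated congruence lattices $\{a\in\Z^m:\sum_j s_ja_j\equiv 0\ (\mathrm{mod}\ q)\}$ and the analogue for $\mathbf t$ are $\norma{\cdot}$-isospectral, i.e.\ contain the same number of points of each given $1$-norm. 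Second, they are isometric if and only if $\mathbf t$ is obtained from $\mathbf s$ by a permutation, sign changes, and multiplication by a common unit modulo $q$.

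Fix an odd dimension $n=2m-1\ge 5$, so $m\ge 3$. First I would fix a \emph{seed}: a pair $L(q;\mathbf s)$, $L(q;\mathbf t)$ of isospectral, non-isometric lens spaces of dimension $2m-1$, which exist for every $m\ge 3$ by Ikeda's examples; that isospectrality forces isometry when $m=2$ is precisely why the bound is $n\ge 5$. To produce the tower, choose a prime $p\nmid q$ and, for each coordinate $j$, a $p$-adically coherent sequence of lifts $s_j^{(k)},t_j^{(k)}$ with $s_j^{(0)}=s_j$, $t_j^{(0)}=t_j$, kept coprime to $p$, and satisfying
\[
s_j^{(k+1)}\equiv s_j^{(k)}\quad\text{and}\quad t_j^{(k+1)}\equiv t_j^{(k)}\pmod{qp^{k}}.
\]
Writing $\Gamma^{(k)}=\langle g^{(k)}\rangle$ with $g^{(k)}$ of order $qp^k$, this coherence is exactly the relation $g^{(k)}=(g^{(k+1)})^{p}$, so that $\Gamma^{(k)}\subseteq\Gamma^{(k+1)}$ and the quotients $M^{(k)}=L(qp^k;\mathbf s^{(k)})$, resp.\ $N^{(k)}=L(qp^k;\mathbf t^{(k)})$, assemble into descending towers $M^{(0)}\to M^{(1)}\to\cdots$ and $N^{(0)}\to N^{(1)}\to\cdots$. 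Coprimality to $qp^k$ is preserved throughout, so each action is free.

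It remains to arrange that $M^{(k)}$ and $N^{(k)}$ are isospectral and non-isometric at every level. Non-isometry is the easy half: I would take the lifts in the two families to share a common $p$-part, so that any isometry between levels would descend modulo $q$ to one of the seed, which does not exist. Isospectrality is the crux, and this is where I expect the \textbf{main obstacle} to lie. By the Chinese Remainder Theorem the congruence lattice at level $k$ is $A\cap C_k$ for the first family and $B\cap C_k$ for the second, where $A,B$ are the ($\norma{\cdot}$-isospectral) seed lattices and
\[
C_k=\{a\in\Z^m:\textstyle\sum_j u_j a_j\equiv 0\ (\mathrm{mod}\ p^{k})\}
\]
is the \emph{common} $p$-part. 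The difficulty is that $\norma{\cdot}$-isospectrality of $A$ and $B$ need not survive intersection with $C_k$: the norm-preserving bijection realizing the seed's isospectrality is genuinely subtle (it cannot be a signed coordinate permutation, as that would make the seed isometric), so it need not map $A\cap C_k$ onto $B\cap C_k$. The heart of the argument is therefore to choose the $p$-part symmetrically — for instance $u_j\equiv 1$ for all $j$, making $C_k$ invariant under all signed permutations — and then to verify, by a refined analysis of the $1$-norm counting function, that the seed's isospectrality bijection is compatible with $C_k$ and so restricts to one between $A\cap C_k$ and $B\cap C_k$ at every level.

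Finally, infinitely many \emph{pairs} of towers are obtained by varying the seed and the prime $p$: distinct seed orders $q$, or genuinely different non-isometric seeds, yield towers whose base manifolds $M^{(0)}$ are mutually non-isometric, hence inequivalent families. Carrying out the construction in each fixed odd dimension $n=2m-1\ge 5$ then gives the stated result.
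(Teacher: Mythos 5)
Your tower skeleton is sound: the coherence condition $s_j^{(k+1)}\equiv s_j^{(k)}\pmod{qp^{k}}$ does give $g^{(k)}=(g^{(k+1)})^{p}$, hence $\Gamma^{(k)}\subseteq\Gamma^{(k+1)}$ and a descending tower, and the non-isometry and ``infinitely many pairs'' parts are routine. But there is a genuine gap exactly where you flag it: you never establish isospectrality above level $0$. You correctly reduce the problem to showing $N_{A\cap C_k}(\ell)=N_{B\cap C_k}(\ell)$ for all $\ell$, where $A,B$ are the $\norma{\cdot}$-isospectral seed lattices and $C_k$ is the common $p$-part, and you correctly observe that the bijection witnessing $N_A=N_B$ is not a signed permutation and so has no reason to preserve $C_k$. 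Declaring that one should then ``verify, by a refined analysis of the $1$-norm counting function,'' that it does is not a proof --- it is a restatement of the problem, and for an arbitrary seed the claim is not known to be true; it is a strong rigidity property of the seed pair that would itself require an argument. A smaller error in the same passage: with $u_j\equiv 1$ the set $C_k=\{a:\sum_j a_j\equiv0\pmod{p^{k}}\}$ is invariant under permutations but \emph{not} under sign changes, so even the symmetry you invoke is weaker than claimed. Secondarily, the existence of a seed in every odd dimension $\geq 5$ is not simply ``Ikeda's examples,'' which live in specific dimensions; the all-dimensions statement itself requires proof (see Proposition~\ref{prop:lenseverydimension}).

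The paper sidesteps the persistence problem by working inside the DeFord--Doyle families: it fixes a prime $r>n^2$, takes $a=(1,2,\dots,n-1,r-\tfrac{n(n-1)}{2})$, which is useful mod $r$, and invokes Theorem~\ref{thm:Doyle}, which asserts that $\LMR(r,t,a)$ and $\LMR(r,t,-a)$ are isospectral and non-isometric for \emph{every} $t$ simultaneously; the tower is then obtained by letting $t$ run through $tk^{j}$ with $k\equiv1\pmod{r}$ and checking the congruence $rt_ia_s+1\equiv rt_ja_s+1\pmod{r^2t_j}$, which makes each level a quotient of the previous one. In other words, the uniform-in-$t$ isospectrality your argument needs is precisely the content of a nontrivial theorem of DeFord and Doyle, valid for their special parameter shape under the ``hereditarily good'' hypothesis --- not something obtained for free from an arbitrary seed by choosing the lifts symmetrically. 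To repair your proof you must either invoke such a result (at which point you are doing what the paper does) or actually carry out the deferred counting argument, which is the hard part.
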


\subsection{Isospectrality between quotients of symmetric spaces} \label{subsec:McReynolds}

In \cite{BGG}, Brooks, Gornet, and Gustafson used Sunada's method in order to construct arbitrarily large families of pairwise isospectral, non-isometric Riemann surfaces. More generally, Spatzier~\cite{Spatzier89} proved that every compact irreducible locally symmetric space admits a pair of isospectral finite covers provided its universal cover $X=G/K$ satisfies that $G$ is of type $\textup{A}_{n}$ for $n\geq26$, $\textup{B}_{n}$ or $\textup{D}_{n}$ for $n\geq13$, or $\textup{C}_{n}$ for $n\geq27$.    After several more examples appeared, McReynolds~\cite[Cor.~1.2]{McReynolds14} established the following result.

\begin{theorem}[McReynolds]\label{thm1:McReynolds}
Every non-compact irreducible simply connected symmetric space $X$ admits isospectral and non-isometric locally symmetric spaces covered by $X$.
\end{theorem}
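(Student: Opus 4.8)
The plan is to derive the theorem from Sunada's method \cite{Sunada85}. Recall that this method requires three pieces of data: a closed locally symmetric space $\Gamma\ba X$ having $X$ as Riemannian universal cover, a surjection $\pi\colon\Gamma\twoheadrightarrow G_0$ onto a finite group, and a \emph{Gassmann triple} $(G_0,H_1,H_2)$, that is, two subgroups $H_1,H_2\leq G_0$ that are \emph{almost conjugate} (equivalently, $\C[G_0/H_1]\cong\C[G_0/H_2]$ as $G_0$-modules) but not conjugate in $G_0$. Writing $\Gamma_i=\pi^{-1}(H_i)$, Sunada's theorem then guarantees that $M_i:=\Gamma_i\ba X$ are (strongly) isospectral. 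The whole content of the theorem thus lies in arranging simultaneously that the $M_i$ are smooth manifolds and that they are \emph{not} isometric.

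First I would manufacture the lattice. Since $X=G/K$ is irreducible of non-compact type, Borel's theorem supplies a cocompact arithmetic lattice in $G=\Iso(X)^0$, which we realise as a subgroup commensurable with the integral points $\mathbf G(\Z)$ of a suitable $\Q$-algebraic group $\mathbf G$ with $\mathbf G(\R)$ isogenous to $G$. Selberg's lemma lets us pass to a torsion-free finite-index subgroup $\Gamma$, so that every quotient of $X$ by a finite-index subgroup of $\Gamma$ is a closed manifold covered by $X$. By strong approximation, for all but finitely many primes $p$ the reduction map $\Gamma\to\mathbf G(\F_p)=:G_0$ is onto; this is the family of finite quotients we shall mine for Gassmann triples. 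Note that in higher rank $\Gamma$ has property (T), so these congruence quotients, rather than maps onto symmetric or free groups, are the only finite quotients available, and the Gassmann triple must be located inside $G_0$ itself.

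For this one uses the $BN$-pair of the finite group of Lie type $G_0$. The model case is type $\mathrm A$, where $G_0\approx\op{PGL}_m(\F_p)$: taking $H_1$ the stabiliser of a point and $H_2$ the stabiliser of a hyperplane of $\mathbb{P}^{m-1}(\F_p)$ gives an almost-conjugate pair, since both permutation modules are $1\oplus W$ with $W$ irreducible and self-dual, and the contragredient (transpose-inverse) automorphism carries one onto the other, whence $\C[G_0/H_1]\cong\C[G_0/H_2]$; for $m\geq 3$ the two stabilisers are not conjugate, fixing subspaces of different dimension. (For $m=3$, $p=2$ this is exactly Sunada's $\op{PSL}(3,2)$ example.) For a general Lie type one selects in the same spirit a pair of standard parabolic subgroups whose flag varieties carry isomorphic permutation modules but which are not conjugate. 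Pulling back along $\pi$ produces the torsion-free groups $\Gamma_1,\Gamma_2$ and hence isospectral manifolds $M_1,M_2$.

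The main obstacle is non-isometry, and it is governed by the commensurator. When $X\neq\mathbf H^2$, Mostow--Margulis strong rigidity turns any isometry $M_1\to M_2$ into an element $g\in\Iso(X)$ with $g\Gamma_1 g^{-1}=\Gamma_2$; as $\Gamma_1,\Gamma_2$ are commensurable arithmetic lattices, $g$ lies in the commensurator, which by Margulis is $\mathbf G(\Q)$ extended by the finite group of graph automorphisms realised on $X$. If moreover $g$ normalises $\Gamma$, then, since $H_1,H_2$ are core-free in $G_0$ (the $G_0$-action on the relevant flag variety being faithful), $g$ must preserve $\ker\pi$ and descend to an automorphism of $G_0$ sending $H_1$ to $H_2$. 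I would therefore strengthen the choice of Gassmann triple, demanding that $H_1$ and $H_2$ be related by \emph{no} automorphism of $G_0$ that is realised by an isometry of $X$ (inner or graph). This is a genuine strengthening: the naive point/hyperplane pair \emph{is} swapped by the duality automorphism, itself realised by an isometry of $X=\SL_m(\R)/\SO(m)$, so the self-duality must be broken by a suitable choice of non-conjugate parabolics of incompatible type, which exist because the graph automorphism group is of bounded order while the supply of almost-conjugate pairs grows with the rank and with $p$. The two residual difficulties --- exhibiting such a pair inside $\mathbf G(\F_p)$ for \emph{every} Lie type, and disposing of commensurator elements $g$ that fail to normalise $\Gamma$ --- are the crux; I would address the latter by taking $\Gamma$ to be a maximal lattice equal to its own normaliser, or by a finiteness argument discarding the finitely many primes $p$ at which a coincidence could occur. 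Finally, the excluded case $X=\mathbf H^2$, where strong rigidity fails, is covered by the classical Sunada surface construction, in which non-isometry is checked directly via a Teichmüller-generic choice of the $G_0$-cover.
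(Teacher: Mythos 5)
You should first be aware that the paper does not prove Theorem~\ref{thm1:McReynolds} at all: it is quoted with attribution from \cite[Cor.~1.2]{McReynolds14}, so there is no internal argument to measure yours against. Your skeleton --- Borel's existence theorem for cocompact arithmetic lattices, Selberg's lemma, strong approximation onto congruence quotients, Sunada's method \cite{Sunada85}, and rigidity for non-isometry --- is indeed the skeleton of McReynolds' proof. But the two steps you yourself flag as ``the crux'' are genuine gaps, and in both places the actual proof does something different.

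The first gap is the supply of Gassmann triples. Non-conjugate standard parabolics $P_I,P_J$ of $\mathbf G(\F_p)$ with $\C[G_0/P_I]\cong\C[G_0/P_J]$ exist only when the corresponding parabolic subgroups $W_I,W_J$ of the Weyl group satisfy $\op{Ind}_{W_I}^{W}1=\op{Ind}_{W_J}^{W}1$, and such coincidences do not occur for every Lie type: already for $W(\op{G}_2)$ (dihedral of order $12$) the two maximal parabolics have distinct permutation characters, so the split form of $\op{G}_2$ --- an irreducible non-compact symmetric space covered by the theorem --- is out of reach of your construction; and in type $\mathrm A$ (likewise the two spinor parabolics in type $\mathrm D$) the natural pairs are swapped by a graph automorphism realised by an isometry of $X$, exactly the failure mode you identify without repairing. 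McReynolds avoids parabolics entirely: as the survey itself notes in its discussion of isospectral sets, his construction generalises that of Brooks--Gornet--Gustafson \cite{BGG}, locating the almost conjugate subgroups inside the abelian congruence kernel $\ker\big(\mathbf G(\Z/p^2\Z)\to\mathbf G(\F_p)\big)\cong\fg(\F_p)$, where almost conjugacy becomes a counting condition on intersections with adjoint orbits and a pigeonhole argument produces, uniformly in the Lie type, arbitrarily many subspaces with the same orbit profile. The second gap is non-isometry: your commensurator analysis is left open, and the proposed fix (take $\Gamma$ maximal and self-normalising) is incompatible with having first passed to a torsion-free finite-index subgroup via Selberg. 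The standard resolution is again counting --- produce $n$ pairwise isospectral quotients and bound the number that can lie in a single isometry class, so that for $n$ large at least two are non-isometric --- rather than a direct verification for one pair. Without repairs at both of these points the argument does not close.
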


Actually, McReynolds proved that for any $n$ there exist $n$ closed isospectral non-isometric manifolds with universal cover $X$.

The aim of Section~\ref{sec:McReynolds} is to discuss the analogous situation for locally symmetric spaces of compact type, which turns out to be very different from the non-compact type setting described above. For instance, there are many compact irreducible symmetric spaces that do not cover any manifold at all. 
Therefore, a natural question is the following. 

\begin{question}\label{prob:symspaces-manifolds}
Which compact simply connected irreducible symmetric spaces cover isospectral and non-isometric manifolds?
\end{question}

One of our results is the following. 

\begin{theorem}\label{thm1:symectricspaces}
Let $G$ be a compact connected simple Lie group of dimension at least $4$ and let $g_0$ be a bi-invariant metric on $G$ (thus $(G,g_0)$ is isometric to an irreducible symmetric space of group type). 
Then $(G,g_0)$ covers 
isospectral and non-isometric manifolds, with the possible exception of $G=\SU(3)$, $\Sp(2)$, and $\op{G}_2$. 
\end{theorem}

The situation for compact irreducible symmetric spaces of non-group type is less unified, so a detailed description in this case is postponed to Section~\ref{sec:McReynolds}.

\subsection*{Acknowledgments}
The authors wishes to express their thanks to the referee for several helpful comments. 
Furthermore, they are indebted to Loren Spice by a very useful answer in Mathoverflow to a question made by the first named author.

\section{Preliminaries}\label{sec:preliminaries}

In this section we introduce fundamental tools that will be used throughout this article. 

\subsection{Spherical space forms}
We consider the $d$-dimensional sphere $S^{d}$ with its Riemannian metric of constant sectional curvature one. 
Its isometry group $\Iso(S^{d})$ is given by $\Ot(d+1)$ via multiplication at the left, where the elements in $S^{d}$ are considered as vertical vectors with $d+1$ entries and Euclidean norm one. 
Similarly, the subgroup of preserving-orientation isometries satisfies $\Iso^0(S^{d})=\SO(d+1)$.

A \emph{spherical space form} is a compact Riemannian manifold with constant sectional curvature.  
Throughout this paper we will assume that the sectional curvature is one, unless explicitly stated otherwise, so that any $d$-dimensional spherical space form is covered by $S^{d}$. 
More precisely, a spherical space form is isometric to $S^{d}/\Gamma$, where $\Gamma$ is a discrete (hence finite) subgroup of $\Ot(d+1)$ acting freely on $S^{d}$. The manifold $S^{d}/\Gamma$ is orientable if and only if $\Gamma\subset\SO(d+1)$.

A non-trivial element $\gamma$ in $\Ot(d+1)$ acts freely on $S^{d}$ (i.e.\ if $\gamma\cdot x=x$ for some $x\in S^{d}$, then $\gamma=\op{I}_{d+1}$) if $+1$ is not an eigenvalue of $\gamma$. 
One can see that the only even-dimensional spherical space forms are spheres $S^{2n}$ and real projective spaces $P^{2n}(\R)$. 
The latter is not orientable since it is the quotient by $\{\pm \op{I}_{2n+1}\}$, which is not contained in $\SO(2n+1)$.
Every odd-dimensional spherical space form is orientable. 

The classification of spherical space forms was obtained by Wolf~\cite{Wolf-book} following techniques due to Vincent (see \cite[\S5.1]{Wolf-book} for a clear explanation). 
Although we will use it several times along this article, we omit the statement because it is quite technical and long, though any reference will be to a precise place in \cite{Wolf-book} (or \cite[\S3--5]{Wolf01}). 

Now, let $\Gamma$ be an arbitrary finite subgroup of $\Ot(d+1)$. 
The quotient $S^{d}/\Gamma$ has a structure of a (Riemannian) good orbifold and is called a \emph{spherical orbifold}. 
Two spherical orbifolds $S^{d}/\Gamma_1$ and $S^{d}/\Gamma_2$ are isometric if and only if $\Gamma_1$ and $\Gamma_2$ are conjugate in $\Ot(d+1)$.

\subsection{Spectral generating functions}
As usual, the \emph{spectrum} of a (compact) Riemannian manifold $M$ is the spectrum of its associated Laplace-Beltrami operator; that is, the the spectrum of $M$ is the collection of Laplace-Beltrami eigenvalues counted with multiplicity. We denote this spectrum by $\Spec(M,g)$.

The spectra of spheres have been known for a long time. 
The eigenfunctions are precisely the spherical harmonics restricted to the corresponding sphere. 
More precisely, by denoting by $\mathcal H_k$ the space of harmonic (w.r.t.\ the euclidean Laplacian on $\R^{d+1}$) homogeneous complex polynomials of degree $k$ in $d+1$ variables, the restriction of $f\in\mathcal H_k$ to $S^{d}$ is an eigenfunction of the Laplacian $\Delta$ of $S^{d}$ with eigenvalue $\lambda_k:=k(k+d-1)$. 
Moreover, since $L^2(S^{d})\simeq\bigoplus_{k\geq0} \mathcal H_k$ because polynomials are dense in the space of continuous functions, $\Spec(S^{d})$ is given by the multiset
\begin{equation}
\Spec(S^{d})=\Big\{\!\!\Big\{ \underbrace{\lambda_k,\dots,\lambda_k}_{\dim\mathcal H_k} \mid k\geq0 \Big\}\!\!\Big\}. 
\end{equation}

There is a well defined Laplacian on every good orbifold (see e.g.\ \cite{Gordon12-orbifold}).
In this article, we will assume for simplicity that the good orbifold is of the form $M/\Gamma$ with $M$ a compact Riemannian manifold and $\Gamma$ is a group acting effectively and by isometries on $M$. 
Thus, every $\Gamma$-invariant eigenfunction on $M$ descends to an eigenfunction on $M/\Gamma$ with the same eigenvalue, and moreover, every eigenfunction on $M$ is of this form.  

We now apply the above paragraph to our case of interest. 
Let $\Gamma$ be a finite subgroup of $\Ot(d+1)$.
Then $\Gamma$ acts on $\mathcal H_k$ by $(\gamma\cdot f)(x)=f(\gamma^{-1} x)$; we denote by $\mathcal H_k^\Gamma$ the subspace of $\Gamma$-invariant elements of $\mathcal H_k$. 
Note that $f\in\mathcal H_k$ is $\Gamma$-invariant considered as a function on $S^{d}$ if and only if $f\in\mathcal H_k^\Gamma$. 
We obtain that 
\begin{equation}\label{eq:Spec(S^d/Gamma)}
\Spec(S^{d}/\Gamma) =\Big\{\!\!\Big\{ \underbrace{\lambda_k,\dots,\lambda_k}_{\dim\mathcal H_k^\Gamma} \mid k\geq0 \Big\}\!\!\Big\}. 
\end{equation}

Ikeda was the first to consider inverse spectral problems for spherical space forms. 
His main tool was the generating function associated to a spherical orbifold $S^{d}/\Gamma$ given by 
\begin{equation}\label{eq:F_Gamma-definition}
F_{\Gamma}(z):= \sum_{k\geq0} \dim\mathcal H_k^\Gamma\, z^k. 
\end{equation}
Clearly, two spherical orbifolds $S^{d}/\Gamma_1,S^{d}/\Gamma_2$ are isospectral (i.e.\ $\Spec(S^{d}/\Gamma_1)=\Spec(S^{d}/\Gamma_2)$) if and only if $F_{\Gamma_1}(z)=F_{\Gamma_2}(z)$. 

Ikeda~\cite[Thm.~2.2]{Ikeda80_3-dimI} proved that 
\begin{equation}\label{eq:F_Gamma-expression}
F_{\Gamma}(z) = \frac{1-z^2}{|\Gamma|} \sum_{\gamma\in\Gamma} \frac{1}{\det(\op{I}_{d}-\gamma z)}, 
\end{equation}
where $\det(\op{I}_{d}-\gamma z) =\prod_{\lambda\in\Spec(\gamma)} (1-\lambda z)$. We note that here, $\Spec(\gamma)$ denotes the set of eigenvalues of $\gamma$, counted with multiplicities. As an immediate consequence, he obtain the following result that can be considered as a precursor of Sunada's method (\cite[Cor.~2.3]{Ikeda80_3-dimI}):
\begin{quote}
\it 
Let $\Gamma_1$ and $\Gamma_2$ be finite subgroups of $\Ot(d+1)$. 
If there is a bijection $\phi:\Gamma_1\to\Gamma_2$ satisfying that $\Spec(\gamma)=\Spec(\phi(\gamma))$ for all $\gamma\in\Gamma_1$, then the spherical orbifolds $S^{d}/\Gamma_1$ and $S^{d}/\Gamma_2$ are isospectral. 
\end{quote}
Ikeda constructed examples of isospectral spherical space forms via this result in \cite{Ikeda83}.

We next show a more general version obtained by Wolf~\cite[Cor.~2.13]{Wolf01}, which replaces the condition involving the spectra of the matrices by the following well-known notion.
\begin{definition}\label{def:almostconjugate}
Two subgroups $\Gamma_1,\Gamma_2$ of $G$ are called {\it almost conjugate} if there is a bijection $\phi:\Gamma_1\to \Gamma_2$ such that $h$ and $\phi(h)$ are conjugate in $G$ for all $h\in \Gamma_1$. 
\end{definition}

\begin{theorem}\label{thm:Sunadamethod}
Let $M$ be a compact Riemannian manifold. 
If $\Gamma_1$ and $\Gamma_2$ are almost conjugate finite subgroups of $\Iso(M)$, then the Riemannian good orbifolds $M/\Gamma_1$ and $M/\Gamma_2$ are strongly isospectral. 
\end{theorem}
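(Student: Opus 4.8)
The plan is to reduce the claim to a short character computation on each eigenspace. Write $G=\Iso(M)$, and recall from the discussion preceding the statement that for a finite subgroup $\Gamma\le G$ the spectrum of the orbifold $M/\Gamma$ is obtained by counting, for each Laplace eigenvalue $\lambda$, the dimension of the space $E_\lambda^{\Gamma}$ of $\Gamma$-invariant $\lambda$-eigenfunctions, where $E_\lambda\subset L^2(M)$ denotes the full $\lambda$-eigenspace of the Laplace--Beltrami operator $\Delta$ on $M$. Thus $\mult_{M/\Gamma}(\lambda)=\dim E_\lambda^{\Gamma}$, and isospectrality of $M/\Gamma_1$ and $M/\Gamma_2$ is equivalent to the equality $\dim E_\lambda^{\Gamma_1}=\dim E_\lambda^{\Gamma_2}$ for every $\lambda$.

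First I would observe that, since $G$ acts on $M$ by isometries, it commutes with $\Delta$, so each eigenspace $E_\lambda$ is a finite-dimensional unitary representation of $G$; let $\chi_\lambda$ denote its character. Projection onto the invariants then gives the standard averaging formula
\[
\dim E_\lambda^{\Gamma}=\frac{1}{|\Gamma|}\sum_{\gamma\in\Gamma}\chi_\lambda(\gamma)
\]
for any finite $\Gamma\le G$.

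The heart of the argument is then purely representation-theoretic: because $\chi_\lambda$ is the character of a representation of the \emph{whole} group $G$, it is a class function on $G$, and hence $\chi_\lambda(\gamma)=\chi_\lambda(\phi(\gamma))$ whenever $\gamma$ and $\phi(\gamma)$ are conjugate in $G$. Using the almost-conjugacy bijection $\phi\colon\Gamma_1\to\Gamma_2$ of Definition~\ref{def:almostconjugate} together with $|\Gamma_1|=|\Gamma_2|$, I would compute
\[
\dim E_\lambda^{\Gamma_1}=\frac{1}{|\Gamma_1|}\sum_{\gamma\in\Gamma_1}\chi_\lambda(\gamma)=\frac{1}{|\Gamma_2|}\sum_{\gamma\in\Gamma_1}\chi_\lambda(\phi(\gamma))=\frac{1}{|\Gamma_2|}\sum_{\delta\in\Gamma_2}\chi_\lambda(\delta)=\dim E_\lambda^{\Gamma_2},
\]
where the third equality uses that $\phi$ is a bijection. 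As $\lambda$ was arbitrary, this proves isospectrality.

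Finally, to upgrade to strong isospectrality I would run the identical computation with $\Delta$ replaced by any natural self-adjoint elliptic operator built from the metric and acting on sections of a natural Riemannian vector bundle (for example the Hodge Laplacian on $p$-forms). Any such operator again commutes with the isometric $G$-action, so each of its eigenspaces is a finite-dimensional $G$-representation, and the same averaging-plus-class-function argument yields equal invariant dimensions, hence equal spectra. The one step requiring genuine care is the reduction itself---verifying that the $G$-action restricts to a bona fide finite-dimensional representation on each eigenspace, so that its character is a class function on all of $G$ and not merely on $\Gamma_i$---after which the proof is elementary; there is no analytic obstacle, the entire content being the conjugacy-invariance of characters.
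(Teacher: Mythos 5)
Your proof is correct and is essentially the argument the paper relies on (via Wolf's cited result and the preliminary discussion that orbifold eigensections are exactly the $\Gamma$-invariant eigensections upstairs): each eigenspace is a finite-dimensional representation of $\Iso(M)$, the invariant dimension is computed by averaging the character, and almost conjugacy plus the class-function property of characters forces the two averages to agree, for the Laplace--Beltrami operator and for any natural elliptic operator alike. Indeed, Ikeda's generating-function identity \eqref{eq:F_Gamma-expression} is precisely this averaging specialized to spheres, so your computation is the general form of the same idea.
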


Strongly isospectral manifolds satisfy the condition that, for any natural vector bundle, the natural strongly elliptic differential operators acting on square integrable sections of the corresponding vector bundles are isospectral, that is, they have the same spectra. 
Instances of these natural differential operators are the Laplace-Beltrami operator, the Hodge-Laplace operator acting on $p$-forms, the Lichnerowicz Laplacian acting on (symmetric) $k$-tensors, etc. 
For any of these operators, its spectrum on a good orbifold $M/\Gamma$ is obtained in a similar way as it was done for the Laplace-Beltrami operator above, namely, the eigensections on $M/\Gamma$ come from $\Gamma$-eigensections on $M$.

\subsection{Lens spaces}\label{subsec:lensspaces}
We now focus on lens spaces, which are quotients of odd-dimensional spheres by cyclic groups under free actions. 
Although lens spaces are topological spaces, we endow them with the constant sectional curvature one Riemannian metric so that they are spherical space forms. 
In fact, lens spaces are the odd-dimensional spherical space forms with cyclic fundamental group. 
Any of them is isometric to one of the following: 
for $q\in\N$ and $s=(s_1,\dots,s_n)\in\Z^n$ with $\gcd(q,s_i)=1$ for all $i$, we set $L(q;s)=S^{2n-1}/\Gamma_{q;s}$, where $\Gamma_{q;s}$ is the group generated by 
\begin{equation}\label{eq:gamma_qs}
\gamma_{q;s}:=
\begin{pmatrix}
R(\tfrac{2\pi s_1}{q}) \\
& \ddots \\
&&R(\tfrac{2\pi s_n}{q}) 
\end{pmatrix},
\qquad \text{where}\ 
R(\theta) = 
\begin{pmatrix}
\cos\theta & \sin\theta\\
-\sin\theta & \cos\theta
\end{pmatrix}
. 
\end{equation}
Sometimes we will write $L(q;s_1,\dots,s_n)=L(q;s)$. 

The condition $\gcd(q,s_i)=1$ for all $i$ ensures that $\Gamma_{q;s}$ acts freely on $S^{2n-1}$. 
The quotient $L(q;s)=S^{2n-1}/\Gamma_{q;s}$ with $s\in\Z^n$ satisfying the weaker condition $\gcd(q,s_1,\dots,s_n)=1$ is called a \emph{lens orbifold}. 

\begin{proposition}\label{prop:lens-isometries}
Let $L=L(q;s)$ and $L'=L(q;s')$ be two lens orbifolds of dimension $2n-1$. 
The following assertions are equivalent: 
\begin{enumerate}
\item $L(q;s)$ and $L(q;s')$ are homeomorphic. 

\item $L(q;s)$ and $L(q;s')$ are diffeomorphic. 

\item $L(q;s)$ and $L(q;s')$ are isometric. 

\item There are $\sigma$ a permutation of $\{1,\dots,n\}$, $\epsilon_i\in\{\pm1\}$ for each $i=1,\dots,n$, and $t\in\Z$ prime to $k$ such that 
\begin{equation}
s_{\sigma(i)} \equiv \epsilon_i t s_i\pmod q
\qquad\text{for all }i=1,\dots,n. 
\end{equation}
\end{enumerate}
\end{proposition}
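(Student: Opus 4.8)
The plan is to prove the cycle of implications $(4)\Rightarrow(3)\Rightarrow(2)\Rightarrow(1)\Rightarrow(4)$. The implications $(3)\Rightarrow(2)\Rightarrow(1)$ are immediate, since every isometry is a diffeomorphism and every diffeomorphism is a homeomorphism. Thus the work concentrates in two places: the elementary equivalence $(3)\Leftrightarrow(4)$, proved by linear algebra, and the substantive arrow $(1)\Rightarrow(4)$, which is the rigidity statement that a homeomorphism between lens orbifolds already forces the arithmetic relation in $(4)$.

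For $(3)\Leftrightarrow(4)$ I would argue entirely through conjugacy in $\Ot(2n)$, using the fact recorded in the preliminaries that $S^{2n-1}/\Gamma_1$ and $S^{2n-1}/\Gamma_2$ are isometric if and only if $\Gamma_1$ and $\Gamma_2$ are conjugate in $\Ot(2n)$. Since $\Gamma_{q;s}$ and $\Gamma_{q;s'}$ are cyclic of the same order $q$, any isomorphism between them carries the generator $\gamma_{q;s}$ to a generator $\gamma_{q;s'}^{\,t}$ with $\gcd(t,q)=1$, and conversely each such $t$ yields an abstract isomorphism. The point is that two such block-rotation matrices are conjugate in $\Ot(2n)$ precisely when they have the same multiset of eigenvalues, i.e.\ the same unordered collection of rotation angles; here one uses that $R(\theta)$ and $R(-\theta)$ are conjugate in $\Ot(2)$ via $\diag(1,-1)$, that $R(\theta)^t=R(t\theta)$, and that any permutation of the $2\times2$ blocks is realized by a permutation matrix in $\Ot(2n)$. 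Comparing the eigenvalue multisets $\{e^{\pm 2\pi\mi s_j/q}\}_j$ of $\gamma_{q;s}$ with $\{e^{\pm 2\pi\mi t s'_j/q}\}_j$ of $\gamma_{q;s'}^{\,t}$ translates exactly into the existence of a permutation $\sigma$ and signs $\epsilon_i$ realizing the congruences of $(4)$. Reading this equivalence from right to left assembles the explicit conjugating element of $\Ot(2n)$ (a signed permutation of blocks composed with the $t$-th power), giving $(4)\Rightarrow(3)$; reading it from left to right gives $(3)\Rightarrow(4)$.

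For the hard arrow $(1)\Rightarrow(4)$ I would invoke the Reidemeister--Franz torsion $\tau$. The two inputs are its topological invariance, so that a homeomorphism forces equality of torsions, and its explicit evaluation on lens spaces: up to its intrinsic indeterminacy, $\tau(L(q;s))$ is represented by the product $\prod_i(\zeta_q^{s_i}-1)$ for a primitive $q$-th root of unity $\zeta_q$. The indeterminacy is multiplication by a trivial unit $\pm\zeta_q^a$ together with the Galois action $\zeta_q\mapsto\zeta_q^t$ ($\gcd(t,q)=1$) coming from a change of generator of $\pi_1$. These match the three parameters of $(4)$ perfectly: the permutation $\sigma$ is free because $\tau$ is a product; the global factor $t$ is the Galois twist; and the per-block signs $\epsilon_i$ are absorbed by the trivial units, since $\zeta_q^{-s}-1=-\zeta_q^{-s}(\zeta_q^{s}-1)$. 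The final step is then a computation in $\Z[\zeta_q]$ showing that equality of these torsion invariants, modulo their indeterminacy, forces $\{s_i\}$ and $\{s'_i\}$ to agree up to a permutation, sign changes, and a common unit, i.e.\ $(4)$.

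I expect the last arrow to be the main obstacle, for two reasons. First, one must develop or carefully cite the topological invariance of Reidemeister--Franz torsion (originally a simple-homotopy invariant; a homeomorphism invariant by Chapman's theorem), because torsion is the only invariant fine enough to separate these classes: homotopy equivalence is strictly coarser for lens spaces, so no homotopy-theoretic argument can suffice. Second, controlling the unit ambiguity in $\Z[\zeta_q]$ coming simultaneously from the orientation sign, the basepoint factor $\zeta_q^a$, and the generator-dependent Galois twist is where the genuine number theory lies. In the orbifold range, where $\gcd(q,s_i)=1$ may fail for some $i$, one additionally sets up the argument equivariantly via the $G$-CW structure on $S^{2n-1}$ so that the torsion computation still applies. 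By contrast, the bookkeeping in $(4)\Rightarrow(3)$ — checking that the signed block permutation lies in $\Ot(2n)$ and conjugates one group onto the other — is routine.
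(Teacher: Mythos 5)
The paper does not actually prove Proposition~\ref{prop:lens-isometries}; it is quoted as a known classification result, so there is no internal proof to compare against. Judged on its own terms, your outline follows the classical route. The equivalence $(3)\Leftrightarrow(4)$ via conjugacy in $\Ot(2n)$ is correct and complete in spirit: two spherical orbifolds $S^{2n-1}/\Gamma_1$, $S^{2n-1}/\Gamma_2$ are isometric iff the groups are conjugate, conjugacy of the cyclic groups reduces to conjugacy of $\gamma_{q;s}$ with some generator $\gamma_{q;s'}^{\,t}$, and conjugacy of orthogonal matrices is detected by eigenvalue multisets; the signed block permutation realizes the converse. This argument uses nothing about freeness, so it covers orbifolds, and it is the only part of the proposition the paper actually relies on. For $(1)\Rightarrow(4)$ your plan --- Reidemeister--Franz torsion, its evaluation $\prod_i(\zeta_q^{s_i}-1)$ up to $\pm\zeta_q^a$ and the Galois twist, topological invariance via Chapman, and Franz's independence argument to kill the unit ambiguity --- is the standard proof for lens \emph{spaces}. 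One quibble: what you call ``a computation in $\Z[\zeta_q]$'' is Franz's independence lemma, a genuine theorem resting on nonvanishing of certain character sums, not a routine calculation; you gesture at this but it deserves to be named as the load-bearing citation.

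The genuine gap is the orbifold case, which is exactly the case the proposition is stated for. When some $\gcd(q,s_i)>1$ the action is not free, and two separate problems appear. First, you must fix what ``homeomorphic'' means: if it means homeomorphism of underlying topological spaces, the statement is in doubt (underlying spaces of distinct orbifolds are frequently homeomorphic), so it must mean an isomorphism of orbifold structures, which then lifts to an equivariant homeomorphism of $S^{2n-1}$ intertwining $\gamma_{q;s}$ with some $\gamma_{q;s'}^{\,t}$. Second, the Reidemeister torsion computation you quote is carried out on the free quotient with its standard CW structure; in the presence of fixed-point sets one needs an equivariant or relative torsion (e.g.\ torsion of the complement of the singular locus, or a Rothenberg-type comparison), and your single sentence about ``setting up the argument equivariantly via the $G$-CW structure'' does not supply this. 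As written, your proof of $(1)\Rightarrow(4)$ establishes the proposition only for lens spaces, not for lens orbifolds; either restrict the topological equivalences to the manifold case and prove $(3)\Leftrightarrow(4)$ for orbifolds separately (which is all the paper uses), or do the equivariant torsion argument in earnest.
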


Concerning the spectrum of a lens orbifold $L:=L(q;s)$, it follows form \eqref{eq:F_Gamma-expression} that 
\begin{equation}\label{eq:F_L-expression}
F_{L}(z):=F_{\Gamma_{q,s}}(z) = \frac{1-z^2}{q} \sum_{h=0}^{q-1} \frac{1}{ (z-\xi_q^{hs_1})(z-\xi_q^{-hs_1})\dots (z-\xi_q^{hs_n})(z-\xi_q^{-hs_n})}, 
\end{equation}
where $\xi_q=e^{2\pi\mi/q}$. 
This expression first appeared in \cite[Thm.~3.2]{IkedaYamamoto79}.

An alternative expression (see \cite[Thm.~3.1]{LMR-SaoPaulo}) is given by 
\begin{equation}\label{eq:F_L-thetafunction}
F_L(z) = \frac{1}{(1-z^2)^{n-1}} \sum_{k\geq0} N_{\mathcal L}(k)z^k,
\end{equation}
where $\mathcal L$ is the associated \emph{congruence lattice} of $L(q;s)$ given by 
\begin{equation}\label{eq:congruencelattice}
\mathcal L(q;s):=\{(a_1,\dots,a_n)\in\Z^n: a_1s_1+\dots+a_ns_n\equiv 0\pmod q\}
\end{equation}
and 
\begin{equation}
N_{\mathcal L}(k)=\#\{(a_1,\dots,a_n)\in\Z^n: |a_1|+\dots+|a_n|=k\}
.
\end{equation}
By expanding the right hand side of \eqref{eq:F_L-thetafunction}, one obtains 
\begin{equation}\label{eq:dimH_k^Gamma-onenorm}
\dim \mathcal H_k^\Gamma=\sum_{r=0}^{\lfloor k/2\rfloor} \binom{r+n-2}{n-2} N_{\mathcal L}(k-2r), 
\end{equation}
which was established in \cite[Thm.~3.5]{LMR-onenorm}.



\section{Eigenvalue equivalence} \label{sec:eigenvalueequivalence}

	The {\it eigenvalue spectrum} of a closed Riemannian orbifold is its set of eigenvalues for the Laplace-Beltrami operator, ignoring multiplicities. Two orbifolds are said to be {\it eigenvalue equivalent} if their eigenvalue spectra coincide.
	
	We will exhibit two different constructions of eigenvalue equivalent spherical orbifolds with different volume. 
	The first one will closely follow the work of Leininger, McReynolds, Neumann, and Reid~\cite{LMNR}, who considered the analogous problem for hyperbolic $n$-manifolds.
	The second one describes the eigenvalue spectrum of an arbitrary lens space making use of the approach in \cite{LMR-onenorm}, which is summarized in \cite[\S2--4]{LMR-SaoPaulo}. 
We conclude this section with examples of eigenvalue equivalent Riemannian manifolds of different dimensions.

\subsection{Eigenvalue equivalent spherical orbifolds}

\begin{definition}
Let $G$ be a finite group. Two subgroups $H$ and $K$ of $G$ are fixed point equivalent if, for any finite dimensional complex representation $\rho$ of $G$, the restriction $\rho\vert_H$ has a nontrivial fixed vector if and only if $\rho\vert_K$ does.
\end{definition}

Our interest in fixed point equivalent subgroups of finite groups is the following refinement of Sunada's method \cite[Theorem 2.6]{LMNR} (note that almost conjugate subgroups as in Definition~\ref{def:almostconjugate} are always fixed point equivalent \cite[Proposition 2.4]{LMNR}).

\begin{theorem}\label{sunadarefinement}
Let $H$ and $K$ be fixed point equivalent subgroups of a finite group $G$. If $M$ is a compact Riemannian manifold and $\pi_1(M)$ admits a surjective homomorphism onto $G$, then the covers $M_H$ and $M_K$ associated to the pullbacks in $\pi_1(M)$ of $H$ and $K$ have the same sets of eigenvalues of the Laplace-Beltrami operator.
\end{theorem}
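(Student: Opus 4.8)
The plan is to lift both intermediate covers to a single regular cover on which $G$ acts by isometries, and then to recognize that the \emph{set} of Laplace eigenvalues of an intermediate quotient records exactly which eigenspaces carry nonzero invariants --- precisely the data controlled by fixed point equivalence. The reason the conclusion is weaker than in the classical Sunada setting (Theorem~\ref{thm:Sunadamethod}) is that the eigenvalue set detects only the non-vanishing of each invariant subspace, not its dimension, so almost conjugacy is not needed.

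First I would let $\phi\colon\pi_1(M)\to G$ denote the given surjection, set $N=\ker\phi$, and let $p\colon\tilde M\to M$ be the connected Riemannian cover corresponding to $N$, with $\tilde M$ carrying the pulled-back metric. Since $N$ is normal of finite index and $M$ is compact, $\tilde M$ is a compact Riemannian manifold and $G\cong\pi_1(M)/N$ acts on $\tilde M$ by deck transformations; as these preserve the pulled-back metric, they act by isometries. For a subgroup $H\le G$ the pullback $\phi^{-1}(H)$ is the fundamental group of the intermediate cover $M_H$, and there is a canonical isometry $M_H\cong H\ba\tilde M$, and likewise $M_K\cong K\ba\tilde M$.

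Next I would use that $L^2(H\ba\tilde M)$ is naturally identified with the space $L^2(\tilde M)^H$ of $H$-invariant functions, an identification that intertwines the two Laplace--Beltrami operators (an $H$-invariant function descends to $M_H$, and its Laplacian is computed downstairs). Decomposing $L^2(\tilde M)=\bigoplus_\lambda E_\lambda$ into Laplace eigenspaces on $\tilde M$, each $E_\lambda$ is finite-dimensional, and since $G$ acts by isometries and hence commutes with $\Delta_{\tilde M}$, each $E_\lambda$ is a finite-dimensional complex $G$-representation. Taking $H$-invariants respects this decomposition, so $L^2(M_H)=\bigoplus_\lambda E_\lambda^{H}$, and a real number $\lambda$ is an eigenvalue of the Laplacian on $M_H$ if and only if $E_\lambda^{H}\neq 0$, i.e.\ if and only if the $G$-representation $E_\lambda$ has a nontrivial $H$-fixed vector. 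The identical statement holds with $K$ in place of $H$.

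Finally I would apply the definition of fixed point equivalence to the representation $\rho=E_\lambda$ for each $\lambda$: the restriction $E_\lambda|_H$ has a nontrivial fixed vector if and only if $E_\lambda|_K$ does. Combined with the previous paragraph, $\lambda$ is an eigenvalue of $M_H$ if and only if it is an eigenvalue of $M_K$, which is exactly the assertion that $M_H$ and $M_K$ have the same set of Laplace--Beltrami eigenvalues. I do not expect a serious obstacle here: the crux is simply that the eigenvalue set sees only the non-vanishing of $E_\lambda^{H}$ rather than its dimension $\dim E_\lambda^{H}$ (the quantity that records multiplicity and that almost conjugacy, but not mere fixed point equivalence, would control). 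The only minor care required is the bookkeeping for intermediate covers and the verification that $G$ genuinely acts by isometries, so that each eigenspace $E_\lambda$ is a bona fide $G$-module.
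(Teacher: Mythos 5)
Your proof is correct. Note that the paper does not prove this statement at all --- it quotes it as \cite[Theorem 2.6]{LMNR} --- so there is no internal proof to compare against; your argument (pass to the regular cover $\tilde M$ corresponding to $\ker\phi$, decompose $L^2(\tilde M)$ into the finite-dimensional Laplace eigenspaces $E_\lambda$, each a $G$-module since $G$ acts isometrically, and observe that $\lambda$ occurs in $M_H$ iff $E_\lambda^H\neq 0$) is the standard one and is essentially the proof given in the cited reference. The only cosmetic point is to work with complex-valued $L^2$ (or complexify $E_\lambda$) so that the definition of fixed point equivalence, stated for complex representations, applies verbatim.
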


We now prove the existence of spherical orbifolds whose sets of eigenvalues of the Laplace-Beltrami operator coincide yet whose volumes are different.

\begin{theorem}
Let $n\geq 9$. There exist $n$-dimensional spherical orbifolds $M_1$ and $M_2$ which are eigenvalue equivalent yet whose volumes satisfy $\mathrm{vol}(M_2)=3\cdot \mathrm{vol}(M_1)$.
\end{theorem}
\begin{proof}
Theorem 3.2 of \cite{LMNR} shows that there exist subgroups $K< H$ of $\mathrm{PSL}_2(\mathbb Z/9\mathbb Z)$ with $[H:K]=3$ and which are fixed point equivalent. An easy computation is SAGE shows that $\mathrm{PSL}_2(\mathbb Z/9\mathbb Z)$ is isomorphic to the subgroup $G=\langle (3,9,4,6)(5,10,8,7), (1,2,4)(5,6,8)(7,9,10)\rangle$ of the permutation group $S_{10}$. Identifying $S_{10}$ with the set of $10\times 10$ permutation matrices in $\Ot(10)$ allows us to identify $G$ with a subgroup of $\Ot(10)<\Ot(n+1)$. We may therefore assume that $G< \Ot(n+1)$. Let $M=S^{n}/G$ and $M_1, M_2$ be the covers of $M$ associated to the pullbacks in $G$ of $H$ and $K$ along the isomorphism $G\to \mathrm{PSL}_2(\mathbb Z/9\mathbb Z)$. That $M_1$ and $M_2$ have the desired properties now follows from Theorem \ref{sunadarefinement}.
\end{proof}

\subsection{Eigenvalue equivalent lens spaces}

From the description \eqref{eq:Spec(S^d/Gamma)} of the spectrum of the Laplace-Beltrami operator of a spherical orbifold $S^{2n-1}/\Gamma$, it follows immediately that the eigenvalue spectrum of $S^{2n-1}/\Gamma$ is given by 
\begin{equation}\label{eq:eigenvaluespectrum}
\mathcal E(S^{2n-1}/\Gamma):= \{ \lambda_k=k(k+2n-2) : k\in\N\text{ and }\dim\mathcal H_k^\Gamma>0 \},
\end{equation}
where $\mathcal H_k$ denotes the space of harmonic homogeneous complex polynomials of degree $k$ in $2n$ variables. 
We next focus in the particular case of lens spaces introduced in \S\ref{subsec:lensspaces}. 

\begin{theorem}\label{thm:eigenvaluespectrumlens}
The eigenvalue spectrum of a lens space $L=L(q;s_1,\dots,s_n)$  is given by 
\begin{equation}
\mathcal E(L)=\{\lambda_{2k}:k\in\N_0\} \cup 
\{\lambda_{k_0(L)+2k}:k\in\N_0\},
\end{equation}
where $\lambda_{k}=k(k+2n-2)$ and
\begin{equation}\label{eq:k_0}
k_0(L)=\min\left\{
k\in\N: 
\begin{array}{l}
\text{$k$ is odd and there is }(a_1,\dots,a_n)\in\Z^n \text{ such that}
\\
|a_1|+\dots+|a_n|=k \text{ and } 
a_1s_1+\dots+a_ns_n\equiv0\pmod q 
\end{array}
\right\}.
\end{equation}
\end{theorem}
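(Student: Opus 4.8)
The plan is to read the description of $\mathcal E(L)$ directly off the explicit formula \eqref{eq:dimH_k^Gamma-onenorm} for $\dim\mathcal H_k^{\Gamma_{q;s}}$, reducing the whole statement to a parity question about which $\ell^1$-norms are realized by the congruence lattice. Recall from \eqref{eq:eigenvaluespectrum} that $\lambda_k\in\mathcal E(L)$ if and only if $\dim\mathcal H_k^{\Gamma_{q;s}}>0$, so it suffices to pin down the set of indices $k$ for which this dimension is nonzero.

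First I would exploit positivity. Since $n\geq2$, every binomial coefficient $\binom{r+n-2}{n-2}$ occurring in \eqref{eq:dimH_k^Gamma-onenorm} is a strictly positive integer, while each $N_{\mathcal L}(k-2r)\geq0$, where $N_{\mathcal L}(m)$ is the number of vectors of $\mathcal L=\mathcal L(q;s)$ with $\ell^1$-norm equal to $m$. A sum of nonnegative terms is positive exactly when one term is, so
\[
\dim\mathcal H_k^{\Gamma_{q;s}}>0 \iff N_{\mathcal L}(m)>0 \text{ for some } m \text{ with } 0\le m\le k,\ m\equiv k\pmod 2 .
\]
By the definition \eqref{eq:congruencelattice} of $\mathcal L(q;s)$, the condition $N_{\mathcal L}(m)>0$ says precisely that there exists $(a_1,\dots,a_n)\in\Z^n$ with $|a_1|+\dots+|a_n|=m$ and $a_1s_1+\dots+a_ns_n\equiv0\pmod q$, which is exactly the predicate appearing inside \eqref{eq:k_0}.

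Then I would split on the parity of $k$. For $k$ even, the value $m=0$ is admissible and $N_{\mathcal L}(0)=1$ since the zero vector lies in $\mathcal L$; hence $\dim\mathcal H_k^{\Gamma_{q;s}}>0$ automatically, producing the piece $\{\lambda_{2k}:k\in\N_0\}$. For $k$ odd, the admissible $m$ are the odd integers $\le k$, so the dimension is positive if and only if $\mathcal L$ realizes some odd norm $m\le k$, i.e. if and only if $k\ge k_0(L)$ with $k_0(L)$ the smallest odd norm attained in $\mathcal L$ — which is precisely \eqref{eq:k_0}. As $k_0(L)$ is itself odd, the odd integers $\ge k_0(L)$ are exactly $\{k_0(L)+2k:k\in\N_0\}$, yielding the second piece, and the union gives the claimed formula.

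I anticipate no genuine obstacle, as the whole argument rests on \eqref{eq:dimH_k^Gamma-onenorm} plus positivity of the binomial weights, the rest being parity bookkeeping. The one point needing care is the degenerate case in which $\mathcal L$ contains no vector of odd $\ell^1$-norm (for instance when $-\op{I}_{2n}\in\Gamma_{q;s}$, which forces $\dim\mathcal H_k^{\Gamma_{q;s}}=0$ for every odd $k$): then the minimum in \eqref{eq:k_0} is taken over the empty set, and with the convention $k_0(L)=+\infty$ the set $\{\lambda_{k_0(L)+2k}:k\in\N_0\}$ is empty, so the stated identity still holds. I would flag this convention explicitly.
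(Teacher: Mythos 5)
Your proposal is correct and follows essentially the same route as the paper's own proof: both reduce the statement to positivity of $\dim\mathcal H_k^{\Gamma}$ via \eqref{eq:dimH_k^Gamma-onenorm}, handle even $k$ with $N_{\mathcal L}(0)=1$, and characterize the odd part by the minimal odd one-norm realized in $\mathcal L(q;s)$. Your closing remark about the case where $\mathcal L$ realizes no odd norm (e.g.\ $q$ even, forcing $-\op{I}_{2n}\in\Gamma_{q;s}$) is a genuine edge case that the paper's proof passes over silently, and the convention $k_0(L)=+\infty$ you propose is the right way to make the statement literally true there.
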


\begin{proof}
Write $L=L(q;s_1,\dots,s_n)=S^{2n-1}/\Gamma$, with $s=(s_1,\dots,s_n)$ and $\Gamma=\Gamma_{q,s}$.
Recall from \eqref{eq:congruencelattice} that its associated congruence lattice is given by 
\begin{equation}
\mathcal L:=\mathcal L(q;s):=\{(a_1,\dots,a_n)\in\Z^n: a_1s_1+\dots+a_ns_n\equiv 0\pmod q\}.
\end{equation}

Besides \eqref{eq:eigenvaluespectrum}, the main tool in the proof will be the formula for $\dim \mathcal H_k^\Gamma$ given in \eqref{eq:dimH_k^Gamma-onenorm}:
\begin{equation}\label{eq:dimH_k^Gamma3}
\dim \mathcal H_k^\Gamma=\sum_{r=0}^{\lfloor k/2\rfloor} \binom{r+n-2}{n-2} N_{\mathcal L}(k-2r),
\end{equation}
where $N_{\mathcal L}(k)=\#\{\mu\in\mathcal L: \norma{\mu}=k\}$. 
Notice $\dim \mathcal H_k^\Gamma>0$ if and only if $N_{\mathcal L}(k-2r)>0$ for some $r\in \{0,1,\dots,\lfloor k/2\rfloor\}$. 

Suppose $k\in\N_0$ is even.
By setting $r=k/2$, we obtain that $N_{\mathcal L}(k-2r)=N_{\mathcal L}(0)=1$ since $(0,\dots,0)$ is clearly the only element in $\mathcal L$ with one-norm equal to $0$.
We conclude that $\lambda_k\in\mathcal E(L)$ for all $k$ even.

We now assume that $k\in\N$ is odd. 
The odd positive integer $k_0:=k_0(L)$ given in \eqref{eq:k_0} satisfies $N_{\mathcal L}(k_0)>0$. 
Hence $\lambda_k\in\mathcal E(L)$ for every odd integer $k\geq k_0$ by setting $r=\frac{k-k_0}{2}$ in \eqref{eq:dimH_k^Gamma3}.  
If $\lambda_{k}\in\mathcal E(L)$ for some odd integer $k<k_0$, then $N_{\mathcal L}(k_1)>0$ for some odd integer $k_1\leq k$ by \eqref{eq:dimH_k^Gamma3}, which implies the contradiction $k_1\geq k_0$. 
This concludes the proof. 
\end{proof}

Theorem~\ref{thm:eigenvaluespectrumlens} ensures that the eigenvalue spectrum of a lens space $L$ depends only on $k_0(L)$. 
This allows us to give curious examples of eigenvalue equivalent lens spaces.

\begin{proof}[Proof of Theorem~\ref{thm:lenseigenvalueequivalent}]
Write $d=2n-1$ for some $n\in\N$ with $n\geq2$.  
Let $\mathfrak L(n,q)$ denote the isometry classes of $(2n-1)$-dimensional lens spaces with fundamental group of order $q$. 
For a positive odd integer $k$, we set 
\begin{equation}
\mathfrak L(n,q)_{k}=\{L\in\mathfrak L(n,q): k_0(L)=k\}. 
\end{equation}
It follows immediately from Theorem~\ref{thm:eigenvaluespectrumlens} that the elements in 
\begin{equation*}
\mathfrak L^*(n)_k:=\bigcup_{q\geq3}\mathfrak L(n,q)_{k}
\end{equation*}
are pairwise non-isometric and mutually eigenvalue equivalent. 
It remains to show that $\mathfrak L^*(n)_{k}$ is infinite for some $k$. 

We pick $k=3$. 
One has that $L_q:=L(q;1,\dots,1,2)\in\mathfrak L(n,q)_3$ for any positive integer $q\geq3$.
In fact, $\mu:=(2,0,\dots,0,-1)\in\mathcal L(q;1,\dots,1,2)$ satisfies $\norma{\mu}=3$, and it is evident that no element of one-norm $1$ is contained in $\mathcal L(q;1,\dots,1,2)$.

The assertion about the volume follows immediately from 
\begin{equation}
\frac{\vol(L_{q'})}{\vol(L_{q})}
=\frac{\vol(S^{2n-1})}{q} \frac{q'}{\vol(S^{2n-1})}
=\frac{q'}{q}
\end{equation}
by taking $q=3$ and $q'$ arbitrarily large. 

\end{proof}

Theorem~\ref{thm:eigenvaluespectrumlens} also allows us to show that the eigenvalue spectrum does not detect singularities.

\begin{example}\label{ex:eigenvaluespectrumdoesnotdetectsingularities}
For positive integers $q,q'\geq3$, the lens orbifold $L(2q;1,2,q)$, which has non-trivial singularities, is eigenvalue equivalent to the lens space $L(q';1,1,2)$ since 
\begin{equation*}
k_0\big(L(2q;1,2,q)\big)=k_0\big(L(q';1,1,2)\big)=3.
\end{equation*} 
\end{example}

\subsection{Eigenvalue equivalent manifolds with different dimensions}\label{subsec:eigenvalueequivdimensions}

It is known that in general, dimension is not determined by the eigenvalue spectrum.
For instance, Miatello and Rossetti~\cite[Ex.~3.8]{MR-length} have observed that the eigenvalue spectrum of the square torus $\R^n/\Z^n$ is $\Z_{\geq0}$ for all $n\geq4$.

We now give examples of normal homogeneous Riemannian manifolds having the same eigenvalue spectrum but distinct dimensions. 
A particularly simple example is $\big(S^2,\tfrac{1}{4} \, g_{\text{round}}\big)$ and $\big(P^3(\R), g_{\text{round}}\big)$, where $g_{\text{round}}$ denotes in both cases the round metric with constant sectional curvature one. 
Indeed, on the one hand, the eigenvalues of $\big(S^2,\tfrac{1}{4} \, g_{\text{round}}\big)$, which is isometric to the sphere in $\R^3$ of radius $\frac12$ endowed with the Riemannian metric induced by the canonical Euclidean metric in $\R^3$, are given by $4$-times the eigenvalues of $\big(S^2,g_{\text{round}}\big)$, that is, 
\begin{equation}
\left\{ 4k(k+1):k\in\N_0 \right\}.
\end{equation} 
On the other hand, the eigenvalues of the $3$-dimensional real projective space $\big(P^3(\R), g_{\text{round}}\big)$ with constant sectional curvature one are the same as those of its $2$-cover $\big(S^3, g_{\text{round}}\big)$ associated to odd-dimensional representations, that is,
\begin{equation}
\left\{ k(k+2):k\in 2\N_0 \right\}.
\end{equation} 
Of course, the multiplicities do not match since $4k(k+1)$ has multiplicity $k+1$ in $\big(S^2,g_{\text{round}}\big)$ for any $k\in\N$, 
while $k(k+2)$ has multiplicity $k^2$ for any $k\in 2\N_0$. 

Both of the above manifolds are symmetric spaces. 
Our next result generalizes them and provides infinitely many examples of eigenvalue equivalent normal homogeneous Riemannian manifolds with different dimensions. 

\begin{theorem}\label{thm:eigenvalueequivalent-diffdimensions}
Let $G$ be a compact, connected, simply connected, and semisimple Lie group. 
The centerless Lie group $G/Z(G)$ endowed with the standard bi-invariant metric ($Z(G)$ is the center of $G$ and the metric is induced by the Killing form of $\fg$)
has the same eigenvalue spectrum as 
the standard metric on the full flag manifold $G/T$ ($T$ is a maximal torus in $G$ and the metric is induced by the Killing form of the Lie algebra $\fg$ of $G$).
Moreover, $\dim G/Z(G)>\dim G/T$.
\end{theorem}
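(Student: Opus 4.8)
The plan is to compute both eigenvalue spectra by means of the Peter--Weyl theorem together with the fact that on a normal homogeneous space the Laplace--Beltrami operator acts on each isotypic component as a Casimir scalar. Write $\bar G=G/Z(G)$. Since $G$ is semisimple its center $Z(G)$ is finite, so the covering $G\to\bar G$ is a local isometry and the Laplacian of $\bar G$, pulled back to $G$, coincides with that of $G$, namely $\Cas_G$. With the standard metric the flag manifold $G/T$ is a normal homogeneous space: writing $\fg=\ft\oplus\fm$ for the orthogonal decomposition with respect to (minus) the Killing form, the right-$T$-invariant functions on $G$ are annihilated by the left-invariant fields from $\ft$, so on such functions $\Cas_G=-\sum_a X_a^2$ for an orthonormal basis $\{X_a\}$ of $\fm$, which is exactly the Laplacian of $G/T$. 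Hence, on the $\pi$-isotypic component attached to an irreducible $\pi\in\widehat G$ of highest weight $\Lambda_\pi$, both Laplacians act by the same scalar $\lambda_\pi=\inner{\Lambda_\pi}{\Lambda_\pi+2\rho}$, computed with the same Killing-form normalization. The problem thus reduces to showing that the two spaces involve exactly the same set of representations, since then each eigenvalue spectrum equals $\{\lambda_\pi:\pi\text{ occurs in the corresponding }L^2\}$.

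For $\bar G$, Peter--Weyl gives that the occurring representations are precisely $\widehat{\bar G}=\{\pi\in\widehat G:\ \pi|_{Z(G)}\text{ trivial}\}$. As $G$ is simply connected, the center acts on $V_\pi$ by a character depending only on $\Lambda_\pi$ modulo the root lattice $Q$, and this character is trivial if and only if $\Lambda_\pi\in Q$. Writing $P^+$ for the dominant weights, the eigenvalue spectrum of $\bar G$ is therefore $\{\inner{\Lambda}{\Lambda+2\rho}:\Lambda\in P^+\cap Q\}$. The dimension comparison is immediate and can be recorded here: $\dim\bar G=\dim G$ because $Z(G)$ is finite, while $\dim G/T=\dim G-\operatorname{rank}G<\dim G$ since $G$ is semisimple and hence $\operatorname{rank}G\ge 1$.

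For $G/T$, the Peter--Weyl (Frobenius reciprocity) decomposition $L^2(G/T)\cong\bigoplus_\pi V_\pi\otimes(V_\pi^*)^T$ shows that $\pi$ contributes to the spectrum exactly when $(V_\pi)^T\neq 0$, i.e.\ when the zero-weight space of $V_\pi$ is nonzero (the $T$-fixed vectors are precisely the weight-zero vectors). The crux is to identify this with the condition $\Lambda_\pi\in Q$. One direction is clear: all weights of $V_\pi$ lie in $\Lambda_\pi+Q$, so $0$ can be a weight only if $\Lambda_\pi\in Q$. For the converse I would invoke the standard lemma that a \emph{dominant} weight lying in $Q$ is automatically a nonnegative integral combination of simple roots; this follows from the positivity of the entries of the inverse Cartan matrix, which expresses each fundamental weight as a positive combination of simple roots. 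Consequently $\Lambda_\pi\in P^+\cap Q$ forces $0\preceq\Lambda_\pi$, and since a dominant weight $\mu$ is a weight of $V_\pi$ whenever $\mu\preceq\Lambda_\pi$, the weight $0$ occurs. Thus $V_\pi$ has a nonzero zero-weight space precisely when $\Lambda_\pi\in P^+\cap Q$.

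Combining the two computations, both eigenvalue spectra equal $\{\inner{\Lambda}{\Lambda+2\rho}:\Lambda\in P^+\cap Q\}$ as subsets of $\R$, so $\bar G$ and $G/T$ are eigenvalue equivalent, while their dimensions differ as noted. I expect the only genuinely nontrivial step to be the characterization of the nonvanishing of the zero-weight space via the root lattice (the ``dominant-in-$Q$ implies $0\preceq\Lambda$'' lemma); the Casimir identification, the Peter--Weyl bookkeeping, and the dimension count are all routine.
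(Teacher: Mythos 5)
Your proposal is correct and follows essentially the same route as the paper: both compute the two spectra via Peter--Weyl, identify the Casimir eigenvalue on each isotypic component, and reduce eigenvalue equivalence to the fact that a representation contributes to $L^2(G/Z(G))$ (trivial central character) if and only if it contributes to $L^2(G/T)$ (nonzero zero-weight space), both conditions being equivalent to the highest weight lying in the root lattice. The only difference is that where the paper cites Yamaguchi for the spectrum of $G/T$ and Freudenthal for the zero-weight-space criterion, you derive the former from Frobenius reciprocity and prove the latter directly via positivity of the inverse Cartan matrix, which makes the argument self-contained but does not change its substance.
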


\begin{proof}
The proof of this result makes use of objects from representation theory of compact Lie groups that have not appeared in the rest of the article. 
For conciseness, we will give the arguments without introducing several objects (e.g.\ weight lattice) and will instead  refer the interested reader to other references.

For an arbitrary compact connected Lie group $K$, the spectrum of the standard bi-invariant metric is given by 
\begin{equation}
\Big\{\!\! \Big\{ 
\underbrace{\Cas(\pi),\dots,\Cas(\pi)}_{(\dim V_\pi)^2}
:\pi\in\widehat K
\Big\}\!\!\Big\}.
\end{equation}
Here, $\widehat K$ stands for the unitary dual of $K$ (the equivalent classes of irreducible representations of $K$), and
$\Cas(\pi)$ denotes the scalar for which the Casimir element of $\fg_\C$ acts on $V_\pi$. 
By the Highest Weight Theorem (see e.g.\ \cite[Thm.~7.34]{Sepanski}), $\widehat K$ is in correspondence with dominant analytically integral weights. 

In our case $K=G/Z(G)$, and since $K$ is centerless, \cite[Thm.~6.30(a)]{Sepanski} tells us that the lattice of analytically integral weights coincides with the root lattice $\mathcal R:=\bigoplus_{\alpha\in\Phi(\fg_\C,\ft_\C)} \Z\alpha$, where $\Phi(\fg_\C,\ft_\C)$ denotes the root system associated to the Cartan subalgebra $\ft_\C$ of $\fg_\C$.
Therefore, the eigenvalue spectrum of the standard bi-invariant metric on $G/Z(G)$ is given by 
\begin{equation}\label{eq:eigenvaluespectrumG/T}
\big\{ \Cas(\pi_\Lambda): \Lambda\in\mathcal R\text{ is dominant} \big\},
\end{equation}
where $\pi_\Lambda$ denotes the corresponding irreducible representation with highest weight $\Lambda$. 

Yamaguchi~\cite[Thm.~6]{Yamaguchi79} proved that the spectrum of the full flag manifold $G/T$ endowed with standard (or Killing metric) is given by 
\begin{equation}
\Big\{\!\! \Big\{ 
\underbrace{\Cas(\pi),\dots,\Cas(\pi)}_{\dim V_\pi \dim V_\pi(0)}
:\pi\in\widehat G
\Big\}\!\!\Big\},
\end{equation}
where $V_\pi(0)$ is the weight space associated to the weight zero. 
It is important to note that $\pi\in\widehat G$ contributes $\Cas(\pi)$ as an eigenvalue of the Laplacian if and only if $\dim V_\pi(0)>0$. 
By a result of Freudenthal (see \cite[Thm.~4]{Yamaguchi79}),  $\dim V_\pi(0)>0$ if and only if the highest weight of $\pi$ lives in the root lattice $\mathcal R$.
Therefore, the eigenvalue spectrum of $G/T$ is precisely \eqref{eq:eigenvaluespectrumG/T}.

The last assertion follows easily from $\dim G/Z(G)=\dim G$ since $Z(G)$ is discrete in $G$ and $\dim G/T= \dim G-\dim T$ with $\dim T=\op{rank} G>0$.  
\end{proof}

\begin{remark}
The choice $G=\SU(2)$ in Theorem~\ref{thm:eigenvalueequivalent-diffdimensions} provides the eigenvalue equivalent pair $(P^3(\R),g_{\text{round}})$ and $(S^2,\tfrac{1}{4} g_{\text{round}})$ shown at the beginning of this subsection. 
\end{remark}

\begin{remark}\label{rem:eigenvaluespectrumdoesnothearsingularities}
For any finite subgroup $\Gamma$ of the group $G$ as in Theorem~\ref{thm:eigenvalueequivalent-diffdimensions}, one can see that
\begin{equation}
\begin{aligned}
\Spec(\Gamma \backslash G/Z(G)) &=
\Big\{\!\! \Big\{ 
\underbrace{\Cas(\pi),\dots,\Cas(\pi)}_{\dim V_\pi^\Gamma \dim V_\pi}
:\pi\in\widehat K
\Big\}\!\!\Big\},
\\
\Spec(\Gamma \backslash G/T)&=
\Big\{\!\! \Big\{ 
\underbrace{\Cas(\pi),\dots,\Cas(\pi)}_{\dim V_\pi^\Gamma \dim V_\pi(0)}
:\pi\in\widehat G
\Big\}\!\!\Big\},
\end{aligned}
\end{equation}
so that in particular, they are eigenvalue equivalent. 

For $G$ simple of type $\textup{E}_8$, $\textup{F}_4$ and $\textup{G}_2$, one has that $Z(G)$ is trivial, thus any finite subgroup $\Gamma$ whose action on $G/T$ is not free provides new examples of a manifold $\Gamma\backslash G$ eigenvalue equivalent to an orbifold $\Gamma\backslash G/T$ with non-trivial singularities. 
\end{remark}

\section{Isospectral pairs of largest volume} \label{sec:highestvolume}

In this section we discuss the problem introduced in Subsection~\ref{subsec:highestvolume}: determining the isospectral pair of spherical orbifolds/space forms of largest volume. 

Recall our assumption that every spherical orbifold is endowed with the Riemannian metric with constant sectional curvature one, and that there is an equality
\begin{equation}
\vol(S^d/\Gamma) = \frac{\vol(S^d)}{|\Gamma|} 
\end{equation} 
for any finite subgroup $\Gamma$ of $\Ot(d+1)$.

\subsection{Isospectral spherical orbifold of largest volume}
The orbifold setting is simpler than that of spherical space forms.  

\begin{proof}[Proof of Theorem~\ref{thm1:highestvolume-orbifold}]
Let $\Gamma_1$ and $\Gamma_2$ be the finite subgroups of $\SO(6)$ with diagonal matrices whose entries are 
\begin{equation}
\Gamma_1:
\begin{cases}
(1,1,1,1,1,1),\\
(-1,-1,-1,-1,-1,-1),\\
(-1,-1,1,1,1,1),\\
(-1, 1,-1, 1, 1, 1), \\
(1,-1,-1, 1, 1, 1),\\
(-1, 1, 1,-1,-1,-1), \\
(1,-1, 1,-1,-1,-1), \\
(1, 1,-1,-1,-1,-1),
\end{cases}
\qquad
\Gamma_2:
\begin{cases}
(1, 1, 1, 1, 1, 1), \\
(-1,-1,-1,-1,-1,-1),\\
(-1,-1, 1, 1, 1, 1), \\
(1, 1,-1,-1, 1, 1), \\
(1, 1, 1, 1,-1,-1),\\
(-1,-1,-1,-1, 1, 1), \\
(-1,-1, 1, 1,-1,-1), \\
(1, 1,-1,-1,-1,-1).
\end{cases}
\end{equation}

\cite[Ex.~2.4]{RossettiSchuethWeilandt08} shows that $\Gamma_1$ and $\Gamma_2$ are almost conjugate (but not conjugate) in $\SO(6)$. 
Consequently, $S^5/\Gamma_1$ and $S^5/\Gamma_2$ are (strongly) isospectral by Theorem~\ref{thm:Sunadamethod}.
That $S^5/\Gamma_1$ and $S^5/\Gamma_2$ are not isometric is proven in \cite[Ex.~2.9]{RossettiSchuethWeilandt08}. 

For any $d\geq 5$, let us denote by $\widetilde \Gamma_i$ the subgroup of $\SO(d+1)$ given by adding $d-5$ entries equal to $1$ to each element in $\Gamma_i$. 
It is not difficult to see that $S^{d}/\widetilde \Gamma_1$ and $S^{d}/\widetilde \Gamma_2$ are again isospectral and non-isometric because $\widetilde \Gamma_1$ and $\widetilde \Gamma_2$ are almost conjugate in $\SO(d+1)$. 

Since $\widetilde \Gamma_1$ and $\widetilde \Gamma_2$ each have $8$ elements, we have that $\vol(S^{d+1}/\widetilde \Gamma_i)=\frac{\vol(S^{d+1})}{8}$ for $i=1,2$, and the assertion follows. 
\end{proof}

It is not known whether the above example attains the largest volume asked in Question~\ref{question:highestvolume-orbifold}. 
The following result provides a lower bound for the mentioned highest volume. 

\begin{proposition}
There are no pairs of $d$-dimensional isospectral and non-isometric spherical orbifolds with volume greater than or equal to $\frac{\vol(S^d)}{3}$.
\end{proposition}

\begin{proof}
We have to prove that two isospectral spherical orbifolds of volume $\frac{1}{q}\vol(S^{d})$ are isometric for $q=1,2,3$.
The case $q=1$ is trivial.

Let $\Gamma$ be a subgroup of $\Ot(d+1)$ of order $2$ and let $\gamma$ denote the non-trivial element in $\Gamma$. 
Since $\gamma^2=\op{I}_{d+1}$, the eigenvalues of $\gamma$ are $\pm1$, say $-1$ with multiplicity $m$ and $+1$ with multiplicity $(d+1-m)$. 
By \eqref{eq:F_Gamma-expression} the corresponding spectral generating function is given by 
\begin{equation*}
\begin{aligned}
F_{\Gamma}(z) 
&= \frac{1-z^2}{2}\left( \frac{1}{(z-1)^{d+1}} + \frac{1}{(z-1)^{d+1-m}(z+1)^{m}} \right)
\\
&= -\frac{z+1}{2(z-1)^{d}} - \frac{1}{2(z-1)^{d-m}(z+1)^{m-1}}. 
\end{aligned}
\end{equation*}
It follows immediately that $m-1$ is the order of the pole of $F_{\Gamma}(z)$ at $z=-1$, and consequently $m$ is determined by $\Spec(S^{d+1}/\Gamma)$. 
Since the number of times that the eigenvalue $-1$ is in the spectrum of $\gamma$ determines the conjugacy class of $\Gamma$ in $\Ot(d+1)$, we conclude that any two isospectral spherical orbifolds of volume $\frac12 \vol(S^{d})$ are necessarily isometric.

Let $\Gamma$ be a subgroup of $\Ot(d+1)$ of order $3$. 
Let $\gamma$ denote any non-trivial element of $\Gamma$ so that $\Gamma=\{\op{I}_{d+1},\gamma,\gamma^2\}$. 
Since $\gamma$ and $\gamma^2$ have order $3$, their eigenvalues are as follows: 
$m$-times $\xi_3=e^{\frac{2\pi\mi}{3}}$, $m$-times $\xi_3^2$, and $(d+1-2m)$-times $1$, for some integer $m$ satisfying $1\leq m\leq \frac{d+1}{2}$. 
Hence, 
\begin{equation*}
\begin{aligned}
F_{\Gamma}(z) 
	&= \frac{1-z^2}{3}\left( \frac{1}{(z-1)^{d+1}} + \frac{2}{(z-1)^{d+1-2m} (z-\xi_3)^{m}(z-\xi_3^2)^{m}} \right)
\\
	&= -\frac{z+1}{3(z-1)^{d}} - \frac{1-z^2}{3(z-1)^{d+1-m}(z-\xi_3)^{m}(z-\xi_3^2)^{m}}. 
\end{aligned}
\end{equation*}
As above, $m$ is an spectral invariant because is the order of the pole of $F_{\Gamma}(z)$ at $z=\xi_3$. 
Furthermore, since $m$ determines the conjugacy class of $\Gamma$ in $\Ot(d+1)$, we conclude that any two isospectral spherical orbifolds of volume $\frac13 \vol(S^{d+1})$ are necessarily isometric. 
\end{proof}


\begin{remark}\label{rem:orbifold-dim<=4}
It is not known whether there exist isospectral and non-isometric spherical orbifolds of dimension $\leq 4$. 
There are two good reasons to believe that no such orbifolds exist. 
On the one hand, Bari and Hunsicker~\cite{ShamsHunsicker17} proved that there are no isospectral and non-isometric lens orbifolds of dimension $\leq4$. 
On the other hand, Vásquez~\cite[Prop.~2.4]{Vasquez18} proved that there are no almost conjugate, non-conjugate subgroups of the double cover $\Spin(4)\simeq \SU(2)\times\SU(2)$ of $\SO(4)$. 
\end{remark}

\subsection{Isospectral spherical space forms of largest volume}
We now restrict our attention to manifolds covered by round spheres. 
This case is much more complicated than the previous one. 
In particular, we need several preliminaries.

Question~\ref{question:highestvolume-manifold} is not applicable in even dimensions because in these dimensions it is known that no examples of isospectral, non-isometric spherical space forms exist. 

	In fact, the only spherical space forms of dimension $2n$ are $S^{2n}$ and $P^{2n}(\R)$, and they are not isospectral because there are no round metrics on them with the same volume and total scalar curvature. 

Furthermore, Ikeda~\cite{Ikeda80_3-dimI} proved that any two isospectral $3$-dimensional spherical space forms are necessarily isometric. We will therefore restrict our attention on spherical space forms of odd dimension $\geq5$. 

The following spectral invariants, proven by Ikeda~\cite[Cor.~2.4, 2.8]{Ikeda80_3-dimI}, will be very useful. 

\begin{proposition}[Ikeda]\label{prop:Ikeda-sameorder}
Let $S^{2n-1}/\Gamma_1$ and $S^{2n-1}/\Gamma_2$ be isospectral spherical space forms. 
Then, $|\Gamma_1|=|\Gamma_2|$ and $\Gamma_1,\Gamma_2$ have the same set of orders of their elements.
In particular, $\Gamma_1$ is cyclic if and only if $\Gamma_2$ is cyclic.
\end{proposition}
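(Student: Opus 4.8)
The plan is to read off both invariants directly from the spectral generating function $F_\Gamma$ of \eqref{eq:F_Gamma-expression}, exploiting the \emph{freeness} of the action in a decisive way. First I would recover $|\Gamma|$ from the pole at $z=1$. Since $\Gamma$ acts freely, $\det(\op{I}-\gamma)\neq0$ for every $\gamma\neq\op{I}$, so in \eqref{eq:F_Gamma-expression} only the identity term is singular at $z=1$; it equals $\tfrac{1+z}{|\Gamma|(1-z)^{2n-1}}$, while every other term carries the vanishing factor $1-z^2$ and stays finite there. Hence
\[
\lim_{z\to1}(1-z)^{2n-1}F_\Gamma(z)=\frac{2}{|\Gamma|},
\]
which is manifestly a spectral invariant and yields $|\Gamma_1|=|\Gamma_2|$.

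The heart of the argument is a rigidity lemma forced by the free action: \emph{if $\gamma\in\Gamma$ has order $m$, then every eigenvalue of $\gamma$ is a primitive $m$-th root of unity.} Writing the eigenvalues as $m$-th roots of unity $\lambda_j$ (so $\lambda_j^m=1$) and using that each power $\gamma^k$ with $0<k<m$ lies in $\Gamma$ and is nontrivial, hence acts freely and so has no eigenvalue $1$ (i.e.\ $\lambda_j^k\neq1$), one concludes that each $\lambda_j$ has order exactly $m$. Note this is precisely where the manifold hypothesis enters, and why the statement can fail for orbifolds.

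With this in hand I would pass to $G_\Gamma(z):=\sum_{\gamma\in\Gamma}\det(\op{I}-\gamma z)^{-1}=\tfrac{|\Gamma|}{1-z^2}F_\Gamma(z)$, a spectral invariant by the first paragraph, and decompose it by element order as $G_\Gamma=\sum_{m\geq1}G_\Gamma^{(m)}$, where $G_\Gamma^{(m)}:=\sum_{\gamma:\,\op{ord}(\gamma)=m}\det(\op{I}-\gamma z)^{-1}$. Each summand $\det(\op{I}-\gamma z)^{-1}$ is a proper rational function (value $1$ at $z=0$, vanishing at $\infty$) whose poles sit at the reciprocals of the eigenvalues of $\gamma$; by the rigidity lemma these are primitive $m$-th roots of unity. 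Thus $G_\Gamma^{(m)}$ is proper with poles confined to the primitive $m$-th roots of unity, and since these loci are pairwise disjoint as $m$ varies, uniqueness of partial fractions shows that $G_\Gamma^{(m)}$ is exactly the sum of the principal parts of $G_\Gamma$ at the primitive $m$-th roots of unity. In particular each $G_\Gamma^{(m)}$ is a spectral invariant.

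Finally, evaluating at $z=0$ gives $G_\Gamma^{(m)}(0)=\#\{\gamma\in\Gamma:\op{ord}(\gamma)=m\}=:N_m$, so the whole sequence $(N_m)_m$ is a spectral invariant; this is in fact stronger than equality of the sets of element orders. The last assertion then follows at once: a finite group $\Gamma$ is cyclic if and only if it contains an element of order $|\Gamma|$, that is, $N_{|\Gamma|}>0$, which is spectrally determined by the two previous steps. I expect the main obstacle to be the clean separation used in the third paragraph: one might worry that in the single function $F_\Gamma$ the leading Laurent coefficients of different group elements cancel at a shared primitive root of unity. The rigidity lemma circumvents this entirely, since it makes the order-graded pieces $G_\Gamma^{(m)}$ have pairwise disjoint pole sets, so that no cancellation across orders can occur and each $N_m$ is recovered not from a delicate pole-order count but simply by evaluating the reconstructed piece at $z=0$.
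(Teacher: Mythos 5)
Your proof is correct. Note that the paper itself gives no argument for this proposition --- it simply cites Ikeda's Corollaries 2.4 and 2.8 --- so the only comparison available is with Ikeda's original generating-function method, which the paper recalls in \S\ref{sec:preliminaries} and which is exactly the framework you use. Your first step (the order-$(2n-1)$ pole of $F_\Gamma$ at $z=1$, isolated by freeness, gives $2/|\Gamma|$) is Ikeda's Cor.~2.4 verbatim. For the set of orders, Ikeda likewise relies on the rigidity fact that a fixed-point-free element of order $m$ has only primitive $m$-th roots of unity as eigenvalues, and reads off the existence of elements of order $m$ from the poles of $F_\Gamma$ at primitive $m$-th roots of unity. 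Your packaging is slightly cleaner and slightly stronger: by decomposing $G_\Gamma=\sum_m G_\Gamma^{(m)}$ into order-graded pieces with pairwise disjoint pole sets, recovering each piece as a sum of principal parts, and evaluating at $z=0$, you obtain that the entire sequence $N_m=\#\{\gamma\in\Gamma:\mathrm{ord}(\gamma)=m\}$ is a spectral invariant (not merely the set $\{m:N_m>0\}$), and you avoid having to argue that leading Laurent coefficients of different elements of the same order cannot all cancel --- a point that would otherwise require the observation that a nonzero proper rational function must have a pole, which your evaluation at $z=0$ supplies implicitly. All steps check out, including the deduction of cyclicity from $N_{|\Gamma|}>0$.
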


The first goal is to classify non-cyclic finite subgroups of low order acting freely on an odd-dimensional sphere. 
This will allow us to show that the isospectral pairs of spherical space forms of largest volume are realized by lens spaces, at least in low dimensions.

\begin{lemma}\label{lem:non-cyclic24}
If the fundamental group $\Gamma$ of a spherical space form $S^{2n-1}/\Gamma$ is non-cyclic and has order strictly less than $24$, then $\Gamma$ is isomorphic to one of the following groups:
\begin{itemize}
\item the quaternion group $Q_8:=\langle B,R\mid B^4=e,\, R^2=B^2,\, RBR^{-1}=B^3\rangle$ of order $8$,

\item the group $P_{12}:=\langle A,B\mid A^3=B^4=e,\, BAB^{-1}=A^2\rangle$ of order $12$,

\item the generalized quaternion group $Q_{16}:=\langle B,R\mid B^8=e,\, R^2=B^4,\, RBR^{-1}=B^7\rangle$ of order $16$,

\item the group $P_{20}:=\langle A,B\mid A^5=B^4=e,\, BAB^{-1}=A^4\rangle$ of order $20$. 
\end{itemize}
Moreover, for each group $H$ in the above list and $m\in\N$, there is up to isometry exactly one spherical space form with fundamental group isomorphic to $H$ and dimension $4m+3$.
\end{lemma}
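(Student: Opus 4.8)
The statement splits into two tasks: producing the list of four groups, and establishing existence and uniqueness of the associated space form in each dimension $4m+3$.

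\emph{The classification.} The plan is to use the necessary conditions that Wolf's classification \cite{Wolf-book} imposes on the fundamental group $\Gamma$ of a spherical space form, namely Vincent's $pq$-conditions: since $\Gamma$ admits a fixed-point-free orthogonal representation, every subgroup of order $pq$ with $p,q$ primes (not necessarily distinct) must be cyclic. Equivalently, every abelian subgroup of $\Gamma$ is cyclic, so that each Sylow $p$-subgroup is cyclic for odd $p$ and the Sylow $2$-subgroup is cyclic or generalized quaternion (these being the only $2$-groups with a unique involution), and moreover no subgroup of order $pq$ with $p\neq q$ is non-cyclic. First I would run through the non-cyclic groups of each order in $\{4,6,\dots,22\}$ and discard those violating one of these conditions: every group containing an elementary abelian $C_p\times C_p$ (such as $C_2\times C_2$, $C_4\times C_2$, $C_2^3$, $D_4$, $A_4$, $C_3\times C_3$, and the groups of order $18$), and every group containing a non-cyclic subgroup of order $pq$ with $p\neq q$ (the symmetric group $S_3$, the dihedral groups $D_5,D_7,D_{11}$, the non-abelian group of order $21$, and the Frobenius group of order $20$). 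What remains is exactly $Q_8$, $P_{12}=\mathrm{Dic}_3$, $Q_{16}$, and $P_{20}=\mathrm{Dic}_5$, and a direct check confirms that each of these has a unique involution, cyclic odd Sylow subgroups, and no non-cyclic subgroup of order $pq$; hence all four occur as fundamental groups by Wolf's classification.

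\emph{Existence and the admissible dimensions.} Recall from \S\ref{sec:preliminaries} that $S^{2n-1}/\Gamma_1$ and $S^{2n-1}/\Gamma_2$ are isometric precisely when $\Gamma_1$ and $\Gamma_2$ are conjugate in $\Ot(2n)$, and that a matrix acts freely exactly when $1$ is not one of its eigenvalues. Consequently, the isometry classes of space forms with fundamental group $H$ in dimension $2n-1$ are in bijection with the $\Aut(H)$-equivalence classes of faithful real orthogonal representations $\rho\colon H\to\Ot(2n)$ all of whose non-trivial elements act without the eigenvalue $1$; I will call these \emph{free} representations. The key structural remark is that all four groups are binary dihedral (dicyclic) groups and therefore embed in $\SU(2)\cong\Sp(1)\subset\SO(4)$; since $\Sp(1)$ acts freely on $S^3$ by left translation, the defining $2$-dimensional complex representation is faithful and free, of quaternionic type, so its realification is a free real-irreducible representation of real dimension $4$. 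Taking $m+1$ orthogonal copies produces a free representation of dimension $4(m+1)$ and hence a space form of dimension $4m+3$, which proves existence and, because every free real representation has dimension divisible by $4$, explains why these are the only dimensions in which $H$ acts.

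\emph{Uniqueness.} For uniqueness I would determine all free complex irreducibles of $H$. For $Q_8$ and $P_{12}$ there is a single one (the faithful degree-$2$ representation; here the relevant totient count is $\varphi(4)/2=\varphi(6)/2=1$); hence every free representation is isotypic, the free real representation of dimension $4(m+1)$ is unique up to equivalence, and there is exactly one space form in each admissible dimension. For $Q_{16}$ and $P_{20}$ there are two free complex irreducibles ($\varphi(8)/2=\varphi(10)/2=2$), interchanged by an outer automorphism of $H$, and here the uniqueness of the space form is the delicate point; I would obtain it from Wolf's explicit description of the free representations of these groups and the resulting count of isometry classes \cite{Wolf-book}. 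The main obstacle is exactly this count: one must keep track of the simultaneous action of $\Aut(H)$ on the set of free irreducibles and on the multiplicities with which they occur, and verify that in each dimension a single class survives — the bookkeeping supplied by Wolf's classification, to which I would defer for the two groups possessing more than one free irreducible.
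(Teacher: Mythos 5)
Your classification of the four candidate groups is correct but takes a genuinely different route from the paper: you eliminate the non-cyclic groups of order $<24$ via the Vincent/Burnside $pq$-conditions and then invoke the sufficiency of those conditions, whereas the paper works inside Wolf's parametrization by types, solving the arithmetic constraints (I.1)--(I.4) and (II.1)--(II.4) to isolate $P_{12},P_{20}$ (Type I) and $Q_8,Q_{16}$ (Type II) and bounding the orders of Types III--VI from below. Both routes ultimately rest on Wolf's classification; yours is shorter to state but requires a complete (if finite) enumeration of small groups, while the paper's is closer to being self-checking. Your existence argument --- all four groups are dicyclic, hence embed in $\Sp(1)\subset\SO(4)$ acting freely on $S^3$, and every free real representation has dimension divisible by $4$ --- is cleaner and more explicit than the paper's appeal to $\delta(H)=2$. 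Your uniqueness argument for $Q_8$ and $P_{12}$ is complete: one free irreducible forces every free representation to be isotypic.

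The gap is exactly where you locate it, and it cannot be closed by the deferral you propose. For $Q_{16}$ (and likewise $P_{20}$) the isometry classes of $(4m+3)$-dimensional space forms correspond to $\Aut(H)$-orbits of free representations $a_1\rho_1\oplus a_3\rho_3$ with $a_1+a_3=m+1$, where $\rho_1,\rho_3$ are the two faithful $2$-dimensional irreducibles; an automorphism can only permute $\{[\rho_1],[\rho_3]\}$, so the orbits are the unordered pairs $\{a_1,a_3\}$, of which there are $\lceil(m+2)/2\rceil$, not $1$, once $m\geq1$. Concretely, $\rho_1\oplus\rho_1$ and $\rho_1\oplus\rho_3$ are both fixed point free of real degree $8$, they have different characters, and no automorphism of $Q_{16}$ carries one to the other; hence $S^7/(\rho_1\oplus\rho_1)(Q_{16})$ and $S^7/(\rho_1\oplus\rho_3)(Q_{16})$ are non-isometric $7$-dimensional space forms with the same fundamental group. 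So the "count of isometry classes" you would import from Wolf does not return one class, and the uniqueness assertion fails as stated for $Q_{16}$ and $P_{20}$ in dimensions $4m+3$ with $m\geq1$. You should be aware that the paper's own proof has the same soft spot: it deduces isotypicity of every free representation from the fact that the free \emph{irreducibles} form a single $\Aut(H)$-orbit (citing Wolf's Prop.~5.3), which is precisely the inference whose missing multiplicity bookkeeping you correctly flagged. A corrected statement would either restrict the uniqueness claim to $Q_8$ and $P_{12}$, or restate it as uniqueness of the \emph{isotypic} space form, and the downstream application (distinguishing these space forms spectrally from lens spaces) would then need to handle the extra classes directly.
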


\begin{proof}
The proof is deeply based on the classification of spherical space forms obtained by Wolf in \cite{Wolf-book}. 
We will use Wolf's notation to facilitate the reading and refer to the article \cite{Wolf01} which has a summary of it. 

The strategy of the classification of spherical space forms is to classify abstract finite groups $H$ admitting fixed point free real representations (i.e.\ $\rho:H\to\SO(d)$ such that $S^{d-1}/\rho(H)$ is a spherical space form), called \emph{fixed point free groups}, and then to classify, for each fixed point free group $H$, all of its fixed point free real representations. 
We first show that all non-cyclic fixed point free groups of order less than $24$ are those listed in the statement. 

Fixed point free groups divide into six types: Type I,II,\dots, VI (see \cite[\S3]{Wolf01}). 
We will show that the number of non-cyclic fixed point free groups with at most $23$ elements is $2$ for Types I and II, and $0$ for the rest of types. 

The fixed point free groups of Type I are of the form (see \cite[Prop.~3.1]{Wolf01})
\begin{equation}
H_d(m,n,r):=\langle A,B\mid A^m=B^n=e,\; BAB^{-1}=A^r\rangle
\end{equation}
for some $m,n,r,d\in\N$ satisfying that 
\begin{equation*}
\begin{cases}
\textup{(I.1)} & \gcd(n(r-1),m)=1, \\
\textup{(I.2)} & r^n\equiv 1\pmod m,\; 1\leq r\leq m,\\ 
\textup{(I.3)} & \text{$d$ is the order of $r$ in $\Z_m^\times$}, \\
\textup{(I.4)} & \text{$n/d$ is divisible by every prime divisor of $d$.}
\end{cases}
\end{equation*}
Note $|H_{d}(m,n,r)|=mn$. 
One can easily see that 
\begin{equation}\label{eq:cyclic}
H_{d}(m,n,r)\text{ is cyclic} \iff m=1\iff d=1\iff r=1. 
\end{equation}
Furthermore, $m\neq 2$ since otherwise $r=1$ by (I.2) and \eqref{eq:cyclic} gives a contradiction.
Also, $n= 1$ gives $d=1$ by \textup{(I.4)}, so the group is trivial. 

\begin{claim}\label{claim:typeI}
The only non-cyclic fixed point free groups of Type I of order $<24$ are $H_{2}(3,4,2)\simeq P_{12}$ and $H_2(5,4,4)\simeq P_{20}$. 
\end{claim}

\begin{proof}
\renewcommand{\qedsymbol}{$\blacksquare$}
Suppose $H_{d}(m,n,r)$ is non-cyclic and $mn<24$.
We have $d\geq2$, $m\geq 3$ and, $n\geq4$ by \eqref{eq:cyclic}. 

It follows from \textup{(I.4)} that $d=2$ and $n=4$.
In fact, 
$d\geq 4$ implies $n\geq 2d=8$, thus $mn\geq 24$; 
$d=3$ leads to $n=9$ thus $mn\geq 27$;
$d=2$ forces $n= 2n'$ with $n'$ divisible by $2$, thus $24>mn\geq 6n'$, so $n'=2$. 

Now, \textup{(I.1)} implies $m$ is odd, thus $m=3$ or $m=5$. 
If $m=3$ (resp.\ $m=5$), then $r=2$ (resp.\ $r=4$) by \textup{(I.3)}, and the proof is complete since \textup{(I.1)} and \textup{(I.2)} hold. 
\end{proof}

A group of Type II is of the form (see \cite[Prop.~3.1]{Wolf01})
\begin{equation}
H_d(m,n,r,s,t):=
\left\langle A,B,R\mid  
\begin{array}{l}
	A^m=B^n=e,\; BAB^{-1}=A^r,\\ R^2=B^{n/2},\; RAR^{-1}=A^s, \; RBR^{-1}=B^t 
\end{array}
\right\rangle
\end{equation}
for some $m,n,r,d,s,t\in\N$ satisfying \textup{(I.1)--(I.4)} and also 
\begin{equation}
\begin{cases}
\textup{(II.1)} & s^2\equiv 1\pmod m,\; 1\leq s\leq m\\
\textup{(II.2)} & r^{t-1}\equiv 1\pmod m,\\
\textup{(II.3)} & \textup{$n=2^u n''$ with $u\geq2$ and $\gcd(n'',2)=1$},\\
\textup{(II.4)} & t\equiv -1\pmod {2^u},\; t^2\equiv 1\pmod n,\; 1\leq t\leq n
\end{cases}
\end{equation}
Note that $\langle A,B\rangle\simeq H_d(m,n,r)$ is a proper subgroup of $H_d(m,n,r,s,t)$ and $|H_d(m,n,r,s,t)|= 2mn$. 

\begin{claim}
The only non-cyclic fixed point free groups of Type II of order less than $24$ are $H_1(1,4,1,1,3)\simeq Q_{8}$ and $H_1(1,8,1,1,7)\simeq Q_{16}$. 
\end{claim}

\begin{proof}
\renewcommand{\qedsymbol}{$\blacksquare$}
It is clear that the subgroup $\langle A,B\rangle$ must be cyclic since otherwise $mn\geq 12$ by Claim~\ref{claim:typeI}.
Hence, $d=m=r=1$ by \eqref{eq:cyclic}, $s=1$ by \textup{(II.1)}. 
Note \textup{(II.2)} hold trivially. 

Since $|H_d(m,n,r,s,t)|=2n<24$, \textup{(II.3)} forces $n=4$ or $n=8$, which give $t=3$ and $t=7$ respectively, by \textup{(II.4)}. 
The first case give $H_1(1,4,1,1,3) = \langle B,R \mid  B^4=e,\; R^2=B^{2},\; RBR^{-1}=B^3\rangle$, which is isomorphic to the quaternion group $Q_8=\{\pm1,\pm\mi,\pm \mj,\pm \mk\}$ via $B\mapsto \mi$ and $R\mapsto \mj$.
The second case gives $H_1(1,8,1,1,7) = \langle B,R \mid  B^8=e,\; R^2=B^{4},\; RBR^{-1}=B^7\rangle$, which is precisely the usual presentation of the quaternion group $Q_{16}$. 
\end{proof}

Any group of Type III (resp.\ Type IV) has a proper subgroup isomorphic to $H_{d}(m,n,r)$ of coindex $8$ (resp.\ $16$) with $n$ odd and divisible by $3$ (see \cite[Prop.~3.1]{Wolf01}).
Thus, it has $8mn\geq 24$ (resp.\ $16mn\geq 48$) elements. 
Furthermore, the groups of Type V and VI have at least 120 elements since $\SL_2(\F_5)$ is a subgroup of them (\cite[Top of page 327]{Wolf01}). 
This concludes the first part of the proof.

The ways that a fixed point free group $H$ embeds into $\SO(2n)$ via a fixed point free real representation $\rho$ to give all the isometry classes of $(2n-1)$-dimensional spherical space forms of the form $S^{2n-1}/\rho(H)$ is quite complicated to be explained. 
See \cite[\S4--5]{Wolf01} for a nice summary, or \cite[\S7.2--3]{Wolf-book} for their classifications. 
However, our claim follows immediately from \cite[Prop.~5.3]{Wolf01}.
More precisely, for any of our four choices of $H$, any two irreducible complex fixed point free representations of $H$ are related by a composition with an automorphism of $H$ (i.e.\ $\pi_{k,l}\cong \pi_{1,1}\circ \mathtt{A}$ or $\alpha_{k,l}\cong\alpha_{1,1}\circ \mathtt{A}$ for some $\mathtt{A}\in\op{Aut}(H)$ for all $k,l$ admitted, when $H$ is of Type I or II respectively), and have degree $\delta(H)=2$.
Hence, any spherical space form with fundamental group isomorphic to $H$ is isometric to 
\begin{equation}
S^{4m-1}/\big(\underbrace{(\pi\oplus\bar\pi)\oplus \dots \oplus (\pi\oplus\bar\pi) }_{m\text{-times}}\big)(H)
\end{equation}
for some $m\in\N$, where $\pi$ denotes any irreducible complex fixed point free representations of $H$. 
\end{proof}

Lemma~\ref{lem:non-cyclic24} and Proposition~\ref{prop:Ikeda-sameorder} imply that any pair of odd-dimensional isospectral and non-isometric spherical space forms $S^{2n-1}/\Gamma_1,S^{2n-1}/\Gamma_2$ with $|\Gamma_i|<24$ are necessarily lens spaces. Consequently,  $\Gamma_1$ and $\Gamma_2$ are cyclic. 
Therefore, if in a fixed odd dimension $2n-1$ there are isospectral lens spaces with fundamental groups of order strictly less than $24$, then the isospectral pairs of $(2n-1)$-dimensional spherical space forms with largest volume will be lens spaces. 
For instance, this is the case for the isospectral pair 
\begin{equation}\label{minimalIkeda'spair}
\{L(11;1,2,3),\, L(11;1,2,4)\}
\end{equation}
of $5$-dimensional lens spaces found by Ikeda~\cite{Ikeda80_3-dimI}. 
See Table~\ref{table:lens} for many more examples found with the computer (see e.g.\ \cite{Lauret-computationalstudy}). 
The next goal is to construct higher dimensional isospectral pairs of lens spaces from such pairs in low dimension. 
To facilitate the next statement, we introduce some notation.

\begin{notation}
We think of the parameters $s=(s_1,\dots,s_n)$ of a $(2n-1)$-dimensional lens space $L(q;s)$ with fundamental group of order $q$ as a list $[s_1,\dots,s_n]$. 
As usual, the concatenation of lists is given by $[a_1,\dots,a_m] +[b_1,\dots,b_n] =[a_1,\dots,a_m,b_1,\dots,b_n]$ and $r\cdot [a_1,\dots,a_n] =[a_1,\dots,a_n]+\dots+[a_1,\dots,a_n]$ for $r\in\N$. 
For $q\in\N$, we set $q_0=\varphi(q)/2$, where $\varphi(q)=|\Z_q^\times|$ is Euler's totient function, defined to be the number of units in $\Z_q=\Z/q\Z$. 
Let $t(q)=[t_1,\dots,t_{q_0}]$, where $\{\pm t_1,\dots,\pm t_{q_0}\}$ is a representative set of $\Z_q^\times$, i.e.\ for any $m\in\Z$ prime to $q$ one has $m\equiv t_i\pmod q$ or $m\equiv -t_i\pmod q$ for some (unique) $i$. 
For instance, $t(7)=[1,2,3]$ and $t(12)=[1,5]$. 
\end{notation}

\begin{theorem}\label{thm:increasingdimension}
If the $(2n-1)$-dimensional lens spaces $L(q;s)$ and $L(q;s')$ are isospectral and non-isometric, then the $(2n-1+r\varphi(q))$-dimensional lens spaces 
\begin{equation}
L(q; s+r\cdot t(q))
\quad\text{ and }\quad 
L(q;s'+r\cdot t(q))
\end{equation}
are isospectral and non-isometric for every $r\in\N$. 
\end{theorem}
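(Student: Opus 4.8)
The plan is to reduce both assertions—isospectrality and non-isometry of the enlarged pair—to properties of the original pair, exploiting the two structural tools developed in the preliminaries: the congruence-lattice description of the spectrum from \eqref{eq:F_L-thetafunction} and Proposition~\ref{prop:lens-isometries} for isometry. The key observation driving the isospectrality half is that appending the block $r\cdot t(q)$ to the parameter vector contributes \emph{the same} multiplicative factor to each spectral generating function, independent of $s$. Concretely, writing $\widetilde L=L(q;s+r\cdot t(q))$ and using \eqref{eq:F_L-expression}, the sum over $h=0,\dots,q-1$ acquires, for each $h$, an extra factor $\prod_{j=1}^{q_0}(z-\xi_q^{ht_j})^{-1}(z-\xi_q^{-ht_j})^{-1}$ coming from the new coordinates $t(q)=[t_1,\dots,t_{q_0}]$. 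Since $\{\pm t_1,\dots,\pm t_{q_0}\}$ runs over all of $\Z_q^\times$, for any $h$ with $\gcd(h,q)=1$ this factor equals $\prod_{\zeta\in\mu_q,\ \zeta\ \text{primitive}}(z-\zeta)^{-r}$, i.e.\ a power of the $q$-th cyclotomic polynomial that depends only on $h$ and $q$, not on $s$.

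First I would make this precise at the level of the lattice formulation \eqref{eq:F_L-thetafunction}, which is cleaner. The congruence lattice of the enlarged space is
\begin{equation*}
\mathcal L\big(q;\,s+r\cdot t(q)\big)=\Big\{(a,b)\in\Z^{n}\times\Z^{r q_0}:\ \textstyle\sum_i a_i s_i+\sum_{\ell}b_\ell\,\tau_\ell\equiv 0\!\!\pmod q\Big\},
\end{equation*}
where $\tau=r\cdot t(q)$ lists the new parameters. Because the entries $\tau_\ell$ are all units mod $q$ and collectively hit every residue class in $\Z_q^\times$, the theta-like generating function $\sum_k N_{\mathcal L}(k)z^k$ factors as a product of the corresponding series for $L(q;s)$ with a universal series depending only on $q$ and $r$; equivalently $F_{\widetilde L}(z)=F_{L(q;s)}(z)\cdot\Phi_{q,r}(z)$ for a rational function $\Phi_{q,r}$ independent of $s$. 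Granting this, the hypothesis $F_{L(q;s)}=F_{L(q;s')}$ immediately gives $F_{\widetilde L}=F_{\widetilde L'}$, so the enlarged spaces are isospectral by \eqref{eq:Spec(S^d/Gamma)}.

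For non-isometry I would argue by contraposition using Proposition~\ref{prop:lens-isometries}(4). Suppose the enlarged spaces were isometric; then there exist $\sigma$, signs $\epsilon$, and a unit $t\in\Z_q^\times$ realizing $s_{\sigma(i)}\equiv\epsilon_i t\,s_i$ on \emph{all} $n+rq_0$ coordinates. The point is that the block $r\cdot t(q)$ is, by construction, invariant up to permutation and sign under multiplication by any unit $t$: since $\{\pm t_j\}$ is a full set of representatives of $\Z_q^\times$, the list $t\cdot t(q)$ is a permutation-with-signs of $t(q)$. Hence any admissible equivalence of the enlarged parameters can be taken to preserve the two blocks separately, and its restriction to the first $n$ coordinates exhibits an isometry $L(q;s)\cong L(q;s')$, contradicting the hypothesis. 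I would need to check that a block-mixing permutation cannot occur, which follows from a counting/invariance argument on how many coordinates lie in each $\pm$-orbit of $\Z_q^\times$.

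The main obstacle I anticipate is the last point: rigorously ruling out a permutation $\sigma$ that \emph{interleaves} original and appended coordinates. The factorization giving isospectrality is essentially formal, but the non-isometry direction requires showing that the distinguished block $r\cdot t(q)$ can always be separated off. The cleanest route is to observe that for the appended block every residue class of $\Z_q^\times$ appears with the same multiplicity $r$, so this block is canonically determined as the ``balanced'' part; after multiplying by $t$ and reordering, it must map to itself, forcing the restriction to the first $n$ coordinates to be an honest lens-space equivalence. Verifying this multiplicity-invariance—and confirming that it is preserved under the $\epsilon_i$ sign changes and the $\pmod q$ reduction—is the one step that demands genuine care rather than routine bookkeeping.
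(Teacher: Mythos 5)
Your strategy is the paper's strategy (work with the generating function \eqref{eq:F_L-expression}, observe that the appended block contributes an $s$-independent factor to each summand, and use Proposition~\ref{prop:lens-isometries} for non-isometry), and the non-isometry half is fine. In fact the obstacle you anticipate there is not an obstacle at all: condition (4) of Proposition~\ref{prop:lens-isometries} is an equality of multisets of $\pm$-classes mod $q$, so an isometry of the enlarged spaces gives, as multisets in $\Z_q/\{\pm1\}$, the identity $\{s'_i\}\sqcup\{r\cdot t(q)\}=\{ts_i\}\sqcup\{t\cdot r\cdot t(q)\}=\{ts_i\}\sqcup\{r\cdot t(q)\}$ (the last step because multiplication by the unit $t$ permutes $\Z_q^\times/\{\pm1\}$); cancelling the common sub-multiset $\{r\cdot t(q)\}$ yields $\{s'_i\}=\{ts_i\}$ and hence $L(q;s)\cong L(q;s')$, with no need to control which coordinates the permutation $\sigma$ interleaves.

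The isospectrality half, however, hinges on an identity that is false. The factorization is only term-by-term in $h$: the $h$-th summand of $F_{L(q;s+r\cdot t(q))}$ is the $h$-th summand of $F_{L(q;s)}$ times $\bigl(\prod_{j}(z-\xi_q^{ht_j})(z-\xi_q^{-ht_j})\bigr)^{-r}$, and this extra factor \emph{depends on $h$} — it is $(z-1)^{-2rq_0}$ for $h=0$ and $\Phi_q(z)^{-r}$ for $\gcd(h,q)=1$ — so it cannot be pulled out of the sum, and there is no universal rational function $\Phi_{q,r}$ with $F_{L(q;s+r\cdot t(q))}=F_{L(q;s)}\cdot\Phi_{q,r}$. (At the lattice level: $\theta_{\widetilde{\mathcal L}}=\sum_{c\in\Z_q}\theta_{s,c}\,\theta_{\tau,-c}$, where $\theta_{s,c}$ is the theta series of the coset $\{a:\sum a_is_i\equiv c\}$, and the factors $\theta_{\tau,-c}$ genuinely depend on $c$ — their constant term is $1$ for $c=0$ and $0$ otherwise — so the sum does not collapse to $\theta_{\mathcal L}\cdot(\text{universal})$; comparing degree-$3$ coefficients already forces the putative universal factor to depend on $N_{\mathcal L}(2)$ and hence on $s$.) The correct statement, which the paper obtains by separating the $h=0$ term, is the \emph{affine} relation
\begin{equation*}
F_{L(q;s+r\cdot t(q))}(z)=\frac{1-z^2}{q\,(z-1)^{2n+2rq_0}}+\frac{1}{\Phi_q(z)^{r}}\left(F_{L(q;s)}(z)-\frac{1-z^2}{q\,(z-1)^{2n}}\right),
\end{equation*}
which still exhibits $F_{L(q;s+r\cdot t(q))}$ as a function of $F_{L(q;s)}$, $q$, $n$, $r$ alone and therefore yields the conclusion; so your error is repairable, but the identity you propose to ``grant'' is not the one that holds. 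One further point of care, which you half-acknowledge by restricting to $\gcd(h,q)=1$: for $\gcd(h,q)=g>1$ the extra factor is $\Phi_{q/g}(z)^{-r\varphi(q)/\varphi(q/g)}$ rather than $\Phi_q(z)^{-r}$, so the terms with $h$ not coprime to $q$ require separate grouping when $q$ is composite (the paper's displayed computation passes over this as well); your write-up leaves those terms unaddressed.
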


\begin{proof}
We fix $r\in\N$ and write $L=L(q; s+r\cdot t(q))$ and $L'=L(q;s'+r\cdot t(q))$. 
It is a simple matter to check that $L$ and $L'$ are non-isometric using Proposition~\ref{prop:lens-isometries}.
We have to show that $F_L(z)=F_{L'}(z)$. 
Let us denote by $\Phi_q(z)$ the $q$th cyclotomic polynomial; 
notice $\Phi_q(z)=\prod_{j=1}^{q_0} (z-\xi_q^{t_i})(z-\xi_q^{-t_i})$. 
From \ref{eq:F_L-expression}, it follows that 
\begin{equation}\label{eq:F_L(q;s+r.t)}
\begin{aligned}
F_{L}(z)&
= \frac{1-z^2}{q} \sum_{h=0}^{q-1} \frac{1}{ \prod_{i=1}^n (z-\xi_q^{hs_i})(z-\xi_q^{-hs_i})} \left(\frac{1}{\prod_{j=1}^{q_0} (z-\xi_q^{ht_j}) (z-\xi_q^{-ht_j})}\right)^r 
\\ &
= \frac{1-z^2}{q} \frac{1}{(z-1)^{2n+2rq_0}}
+\frac{1}{\Phi_q(z)^{r}}  \frac{1-z^2}{q} \sum_{h=1}^{q-1} \frac{1}{ \prod_{i=1}^n (z-\xi_q^{hs_i})(z-\xi_q^{-hs_i})} 
\\ &
= \frac{1-z^2}{q} \frac{1}{(z-1)^{2n+2rq_0}}
+\frac{1}{\Phi_q(z)^{r}}  \left(F_{L(q;s)}(z)- \frac{1-z^2}{q} \frac{1}{(z-1)^{2n}} \right)
.
\end{aligned}
\end{equation}
Now, using that $F_{L(q;s)}(z)=F_{L(q;s')}(z)$ because $L(q;s)$ and $L(q;s')$ are isospectral by hypothesis, and returning over the steps in \eqref{eq:F_L(q;s+r.t)} for $s'$, we obtain $F_L(z)=F_{L'}(z)$ as required. 
\end{proof}

\begin{table}
\caption{Isospectral lens spaces for low values of $q$ and $n$.}
\label{table:lens}
$
\begin{array}{cccl}
q & q_0 & n & \text{Parameters $s$ of isospectral lens spaces} \\ \hline\hline
11 & 5 & 3 & [1, 2, 3], [1, 2, 4]
\\
&& 7 & [1, 1, 2, 2, 3, 3, 4], [1, 1, 2, 2, 3, 3, 5]
 \\
&& 11 & [1, 1, 1, 2, 2, 2, 3, 3, 3, 5, 5], [1, 1, 1, 2, 2, 2, 3, 3, 4, 4, 4]
\\ \hline
13 & 6 & 3 & [1, 2, 3], [1, 2, 4]
\\
&& 4 & [1, 2, 3, 4], [1, 2, 3, 5]
\\
&& 7 & [1, 1, 2, 2, 3, 3, 5], [1, 1, 2, 2, 3, 4, 4]
\\ 
&& 8 & 
	[1, 1, 2, 2, 3, 3, 4, 4], [1, 1, 2, 2, 3, 3, 5, 5]
\\ \hline
16 & 4 & 5 & [1, 1, 3, 3, 5], [1, 1, 3, 3, 7]
\\ \hline
17 & 8 & 3 & 
	[1, 2, 5], [1, 2, 6]]
\\
&& 4 & 
	[1, 2, 3, 5], [1, 2, 3, 6]
\\
&& 5 & 
	[1, 2, 3, 4, 5], [1, 2, 3, 4, 6]
\\
&& 6 & 
	[1, 2, 3, 4, 5, 6], [1, 2, 3, 4, 5, 7]
\\
&& 10 & 
	[1, 1, 2, 2, 3, 4, 5, 7, 7, 8], [1, 1, 2, 2, 3, 4, 6, 7, 7, 8]
\\ \hline
19 & 9 & 3 & 
	[1, 2, 7], [1, 3, 4]
\\
&& 4 & [1, 2, 6, 8], [1, 3, 4, 5]
\\
&&5&
	[1, 2, 3, 4, 5], [1, 2, 3, 4, 6]
\\
&&6&
	[1, 2, 3, 4, 5, 6], [1, 2, 3, 4, 5, 7]
\\
&& 7 & [1, 2, 3, 4, 5, 6, 7], [1, 2, 3, 4, 5, 6, 8]
\\
&& 11 & 
	[1, 1, 2, 2, 3, 3, 4, 6, 6, 7, 7], [1, 1, 2, 2, 3, 3, 4, 6, 6, 9, 9]
\\  \hline
20 & 4 & 5 & [1, 1, 3, 3, 7], [1, 1, 3, 3, 9]
\\ \hline
21 & 6 & 4 & [1, 2, 4, 5], [1, 2, 4, 8]
\\
&& 9 & [1, 1, 2, 2, 4, 4, 5, 5, 10], [1, 1, 2, 2, 4, 4, 5, 8, 8]
\\
&& 14 & [1, 1, 1, 2, 2, 2, 4, 4, 4, 5, 5, 5, 10, 10], [1, 1, 1, 2, 2, 2, 4, 4, 4, 5, 5, 8, 8, 8]
\\ \hline
22 & 5 & 3 & [1, 3, 5], [1, 3, 7]
\\
&&7 & [1, 1, 3, 3, 5, 5, 9], [1, 1, 3, 3, 5, 7, 7]
\\
&& 11 & [1, 1, 1, 3, 3, 3, 5, 5, 5, 9, 9], [1, 1, 1, 3, 3, 3, 5, 5, 9, 9, 9]
\\ \hline
23 & 11 & 4 & 
	[1, 2, 4, 5], [1, 2, 4, 8]
\\
&& 5 & 
	[1, 2, 3, 4, 11], [1, 2, 3, 5, 6]
\\
&& 6 & 
[1, 2, 3, 4, 5, 10], [1, 2, 3, 4, 6, 7]
\\
&& 7 &
[1, 2, 3, 4, 5, 6, 7], [1, 2, 3, 4, 5, 6, 8]
\\
&& 8 &
[1, 2, 3, 4, 5, 6, 7, 8], [1, 2, 3, 4, 5, 6, 7, 9]
\\
&& 9 & 
[1, 2, 3, 4, 5, 6, 7, 8, 9], [1, 2, 3, 4, 5, 6, 7, 8, 10] 
\\
&& 13 & [1, 1, 2, 3, 3, 4, 4, 5, 6, 7, 8, 9, 10], [1, 1, 2, 3, 3, 4, 4, 5, 6, 7, 8, 10, 11]
\\
&& 14 & [1, 1, 2, 2, 3, 5, 6, 7, 8, 8, 9, 9, 10, 11] [1, 1, 2, 2, 4, 5, 6, 7, 8, 8, 9, 9, 10, 11]
\end{array}
$
\end{table}

\begin{table}
\caption{Existence of isospectral pairs of lens spaces for low values of $q$ and $n$.}
\label{table:existencelens}
$
\begin{array}{cccccccccc}
n\ba q  & \underset{q_0=5}{11}  & \underset{q_0=6}{13} & \underset{q_0=4}{16}& \underset{q_0=8}{17}& \underset{q_0=9}{19} & \underset{q_0=4}{20} & \underset{q_0=6}{21} & \underset{q_0=5}{22}& \underset{q_0=11}{23}
\\[2mm] \hline \rule{0pt}{14pt}
 3&\otimes 	&\times	&-		&\times	&\times	&-		&-		&\times	&- \\
 4&-		&\otimes&-		&\times	&\times	&-		&\times	&-		&\times \\
 5&-		&-		&\otimes&\times	&\times	&\times	& -		&-		&\times \\
 6&-		&-		&-		&\otimes&\times	&-		&-		&-		&\times \\
 7&\otimes 	&\times	&-		&- 		&\times	&-		&-		&\times	&\times \\
 8&\otimes  &\times	&-		&- 		&- 		&-		&-		&\times	&\times \\
 9&- 		&\otimes&\times	&-		&-		&\times	&\times &-		&\times \\
10&- 		&\otimes&-		&\times	&-		&-		&\times	&-		&- \\
11&\otimes 	&-		&-		&\times	&\times	&-		&-		&\times	&- \\
12&\otimes 	&-		&-		&\times	&\times	&-		&-		&\times	&- \\
13&\otimes 	&\times	&\times	&\times	&\times	&\times	&-		&\times	&\times \\ 
14&- 		&\otimes&-		&\times	&\times	&-		&\times	&-		&\times \\ 
\end{array}
$

\medskip 

\textbf{References:} 
$q_0=\frac{\varphi(q)}{2}$, 
$\times $ means that there is an isospectral pair,
$\otimes$ means that there is an isospectral pair which is known to be of largest volume, 
$-$ means there are no isospectral pairs. 

There are no isospectral pairs for $n\leq 14$ and $q\leq 10$ or $q=12,14,15,18$.
\end{table}

This result is very useful in constructing isospectral and non-isometric pairs of lens spaces in arbitrary large dimension and with fundamental group of a fixed size. 
For instance, the pair in \eqref{minimalIkeda'spair} implies the existence of $(2n+3)$-dimensional isospectral and non-isometric lens spaces of volume $\vol(S^{2n+3})/11$ for every $n\in\N$. 

We call a lens space \emph{irreducible} if it cannot be constructed from a lower dimensional lens space by adding the coefficients $t(q)=[t_1,\dots,t_{q_0}]$. 
In other words, $L(q;s)$ is irreducible if it is not isometric to $L(q;s_0+t(q))$ for any $s_0$, or equivalent, there is $m\in\Z^\times$ such that $m\not\equiv \pm s_i\pmod q$ for all $i$. 

After a computational search, Table~\ref{table:lens} shows, for each $3\leq n\leq 14$ and $3\leq q\leq 23$, a pair of $(2n-1)$-dimensional isospectral and non-isometric irreducible lens spaces with fundamental group of order $q$, in case it exists. 
It is worth mentioning that, for each choice of $n,q$ in the table, there exist more isospectral pairs than those appearing there, and the pair may belong to a larger isospectral family. 
The convention is to show the minimal lens space with respect to the lexicographic order that is isospectral to some other non-isometric lens space. 

Combining Theorem~\ref{thm:increasingdimension} and Table~\ref{table:lens}, we obtain that there are pairs of  $(2n-1)$-dimensional isospectral and non-isometric lens spaces with volume strictly greater than $\vol(S^{2n-1})/24$, for many choices of $n$.
Such choices are precisely encoded by condition \eqref{eq:congruences} of Theorem~\ref{thm1:highestvolume-manifold}, whose proof follows.

Table~\ref{table:existencelens} summarizes the existence problem of a pair of $(2n-1)$-dimensional isospectral and non-isometric lens spaces with fundamental group of order $q$, for the same values of $n$ and $q$ considered in Table~\ref{table:lens}. 
In particular, it gives the largest volume of an isospectral pair of spherical space forms for every odd dimension $5\leq 2n-1\leq 27$, namely, $\frac{\vol(S^{2n-1})}{q}$ with $q=11,13,16,17, 11,11,13,13,11,11,11,13$ for $n=3,\dots,14$ respectively. 

We conclude the section with some open questions and remarks.

\begin{question}
Are isospectral and non-isometric spherical space forms of largest volume always lens spaces? 
\end{question}

\begin{remark}
To the best of the authors' knowledge, the smallest size of the fundamental groups of a pair of isospectral spherical space forms with non-cyclic fundamental group known so far is $275$ (see \cite[\S5]{Ikeda97}). 
In the notation of the proof of Lemma~\ref{lem:non-cyclic24}, this pair is realized by two irreducible fixed point free representations of the Type I group $H_5(11,25,3)$ that are non-equivalent by automorphisms of $H_5(11,25,3)$. 
The corresponding spherical space forms have dimension $9$. 
\end{remark}

\begin{remark}
In line with the previous remark, it is very feasible to improve Lemma~\ref{lem:non-cyclic24} by increasing the order to a number greater than $24$. 
However, this is not the main obstruction to prove a statement valid for all $n\geq3$. 
Indeed, the computational results in the search of isospectral lens spaces for $q\leq 64$ provided a much larger set of conditions to \eqref{eq:congruences} in such a way the only values omitted for $n\leq 30000$ are $5039, 6479, 22319, 23759, 28799$.

A very curious fact in the computational results evidences a negative answer for the question below. 
\end{remark}

\begin{question}
Are there isospectral and non-isometric $(2n-1)$-dimensional lens spaces with fundamental group of order $q$ satisfying $n\equiv -1\pmod{\frac{\varphi(q)}{2}}$?
\end{question}

\begin{remark}
Because of a computational search, we know the largest volume of an $(2n-1)$-dimensional pair of isospectral and non-isometric spherical space forms for every $n\leq 14$. The results of our computation are recorded in Table~\ref{table:existencelens}.

Table~\ref{table:existencelens} implies the existence of an $(21+5r)$-dimensional isospectral pair with volume $\frac{\vol(S^{2(11+5r)-1})}{11}$ for every $r\in\N$ (i.e.\ $n=11+5r$ and $q=11$), but it is not clear whether these examples have the largest volumes possible in their dimensions because our computational search was done only for $n\leq 14$. 

It might be possible to prove the non-existence of isospectral lens spaces with fundamental group of order $q\leq 10$ in an arbitrary odd dimension.  
In this case, we obtain the largest volume in infinitely many dimensions. 
More precisely, the largest volume of an $(2n-1)$-dimensional pair of isospectral and non-isometric spherical space forms would be $\frac{\vol(S^{2n-1})}{11}$ for all $n$ congruent to $1$, $2$ or $3$ modulo $5$ and $n\geq11$. 
\end{remark}


\section{Finite part spectrum} \label{sec:finitespectrum}

The goal of this section is to obtain several statements ensuring that, for a quotient of a CROSS (Compact Rank One Symmetric Space), a convenient finite part of the spectrum (not necessarily the first eigenvalues) is sufficient to obtain the full spectrum under some geometric conditions. 

Let $G$ be a compact Lie group and $K$ a closed subgroup of it. 
We endow the homogeneous space $X:=G/K$ with a normal metric, that is, a Riemannian metric induced by an $\Ad(G)$-invariant inner product $\innerdots$ on the Lie algebra $\fg$ of $G$ (e.g.\ minus Killing form of $\fg$ provided $G$ is semisimple). 

For $\Gamma$ a discrete (hence finite) subgroup of $G$, the right regular representation of $G$ on $L^2(\Gamma\ba G)$ decomposes as 
\begin{equation}
L^2(\Gamma\ba G)\simeq \bigoplus_{(\pi,V_\pi)\in\widehat G}n_{\Gamma}(\pi)\, V_\pi.
\end{equation}
Unlike the case of non-compact semisimple Lie groups, the multiplicity $n_{\Gamma}(\pi)$ of $\pi$ in $L^2(\Gamma\ba G)$ can be easily determined by $n_{\Gamma}(\pi)=\dim V_\pi^\Gamma$. 
The spectrum $\Spec(\Gamma\ba X)$ of the Laplace-Beltrami operator $\Delta$ on $\Gamma\ba X$ is determined by these coefficients since the multiplicity of a non-negative real number $\lambda$ in $\Spec(\Gamma\ba X)$ is given by
\begin{equation}\label{eq:mult_Gamma(lambda)}
\mult_{\Gamma}(\lambda) 
:=\sum_{(\pi,V_\pi)\in\widehat G\, \mid\,  \lambda(C,\pi)=\lambda} n_{\Gamma}(\pi)\, \dim V_\pi^K, 
\end{equation}  
where $\lambda(C,\pi)$ is the scalar for which the Casimir operator $C$ associated to $(\fg,\innerdots)$ acts on $V_\pi$.
The value of $\lambda(C,\pi)$ can be computed in terms of the highest weight of $\pi$ via Freudenthal's formula. 
Note that the sum in \eqref{eq:mult_Gamma(lambda)} is restricted to $\widehat G_K:=\{\pi\in\widehat G: V_\pi^K\neq0\}$, the \emph{spherical representations} of the pair $(G,K)$.

It follows from \eqref{eq:mult_Gamma(lambda)} that $\Spec(\Gamma\ba X)$ is included in $\Spec(X)$ in the sense that every eigenvalue $\lambda$ in $\Spec(\Gamma\ba X)$ is necessarily in $\Spec(X)$ and $\mult_{\Gamma}(\lambda)\leq \mult(\lambda)$, where $\mult(\lambda)$ stands for the multiplicity of $\lambda$ in $\Spec(X)$. 

From now on we assume that $X$ is a simply connected CROSS realized as in \eqref{eq:CROSSrealizations}, that is, $S^{n}=\SO(n+1)/\SO(n)$, $P^n(\C)=\SU(n+1)/\op{S}(\Ut(n)\times\Ut(1))$, $P^n(\Hy)=\Sp(n+1)/\Sp(n)\times\Sp(1)$, $P^2(\mathbb O)=\op{F}_4/\Spin(9)$.  
We endow $X$ with the symmetric metric such that the sectional curvature is constantly one for spheres and satisfies $1\leq \op{sec}\leq 4$ for the rest of the cases.  
It turns out that the spherical representations of $(G,K)$ are given by a single string of representations 
\begin{equation}\label{eq:hatG_k}
\widehat G_K=\{(\pi_k,V_k) :k\geq0\}
\end{equation}
with $\dim V_k^K=1$ and that
\begin{equation}\label{eq:lambda_k}
\lambda_k:=\lambda(C,\pi_k) =
\begin{cases}
k(k+n-1) &\quad\text{if }X=S^n,\\
k(k+n) &\quad\text{if }X=P^n(\C),\\
k(k+2n+1) &\quad\text{if }X=P^n(\Hy),\\
k(k+11) &\quad\text{if }X=P^2(\mathbb O),\\
\end{cases}
\end{equation}
for every $k\geq0$. 
For a proof, see for instance Lemmas~4.1, 5.1, 6.2, 7.2, and 8.1 in \cite{LM-repequiv2}, where $\tau$ is always the trivial representation of $K$, so its highest weight is $0$ (see also \cite{Lauret-spec0cyclic}).  
It follows that the spectrum of $\Gamma\ba X$ is given by the eigenvalues $\lambda_k$ having multiplicity $n_{\Gamma}(\pi_k)$, that is, 
\begin{equation}\label{eq:Spec(X/Gamma)}
\Spec(\Gamma\ba X)= \Big\{\!\!\Big\{
\underbrace{\lambda_k,\dots,\lambda_k}_{n_{\Gamma}(\pi_k)}
:k\geq0
\Big\}\!\!\Big\}. 
\end{equation}

Let $|\Phi^+|$ denote the number of positive root associated to the complexified Lie algebra $\fg_\C$. 
One has $|\Phi^+|= n^2, n(n-1), \frac{n(n+1)}{2}, (n+1)^2, 24$ for $X=S^{2n},S^{2n-1}, P^n(\C), P^n(\Hy), P^2(\mathbb O) $ respectively.

The next statement is \cite[Thm.~1.2]{LM-strongmultonethm} applied to our case of interest. 

\begin{proposition}\label{prop:strongmultone}
Let $G$ be a classical compact Lie group or the compact simple Lie group $\op{F}_4$, 
and let $\{\pi_k:k\geq0\}$ denote the irreducible representations of $G$ as in \eqref{eq:hatG_k}. 
Given a positive integer $q$, let $\mathcal A$ be any finite subset of $\N_0$ satisfying that  
\begin{equation}\label{eq:finitecondition}
|\mathcal A\cap (j+q\Z)|\geq |\Phi^+|+1\quad\text{ for all }0\leq j\leq q-1.
\end{equation}
Then, for any finite subgroup $\Gamma$ of $G$ with $q$ divisible by $|\Gamma|$, the finite set of multiplicities $n_{\Gamma}(\pi_k)$ for $k\in\mathcal A$ determine $n_{\Gamma}(\pi_k)$ for all $k\geq0$. 
In particular, if $\Gamma_1,\Gamma_2$ are finite subgroups of $G$ with $q$ divisible by $|\Gamma_i|$ for $i=1,2$ such that $n_{\Gamma_1}(\pi_k)=n_{\Gamma_2}(\pi_k)$ for all $k\in\mathcal A$, then $n_{\Gamma_1}(\pi_k)=n_{\Gamma_2}(\pi_k)$ for all $k\geq0$. 
\end{proposition}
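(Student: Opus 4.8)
The plan is to pass to characters and reduce the statement to an interpolation problem for quasi-polynomials. Writing $n_{\Gamma}(\pi_k)=\dim V_{\pi_k}^\Gamma=\frac{1}{|\Gamma|}\sum_{\gamma\in\Gamma}\chi_{\pi_k}(\gamma)$, where $\chi_{\pi_k}$ denotes the character of $\pi_k$, it suffices to understand, for each fixed $\gamma\in\Gamma$, the integer sequence $k\mapsto\chi_{\pi_k}(\gamma)$. Since $|\Gamma|$ divides $q$, every $\gamma$ has order dividing $q$, so its eigenvalues in any representation are $q$-th roots of unity. The key claim I would establish is this: for each $\gamma$ and each residue $0\le j\le q-1$, the restriction of $k\mapsto\chi_{\pi_k}(\gamma)$ to the progression $k\equiv j\pmod q$ agrees with a polynomial in $k$ of degree at most $|\Phi^+|$.

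To prove the key claim I would study the generating function $\mathcal F_\gamma(z)=\sum_{k\ge0}\chi_{\pi_k}(\gamma)\,z^k$. Because $\{\pi_k\}$ is the single string of spherical representations in \eqref{eq:hatG_k}, with highest weight $k\Lambda$ for a fixed fundamental weight $\Lambda$, Weyl's character formula expresses $\chi_{\pi_k}(\gamma)$ --- after conjugating $\gamma$ into the maximal torus, and, when $\gamma$ is singular, passing to the limiting (L'H\^opital) form of the formula --- as a finite $\Z$-combination of terms $\zeta^k\binom{k+m-1}{m-1}$ with $\zeta$ a $q$-th root of unity. Equivalently, $\mathcal F_\gamma(z)$ is rational and proper, with poles supported on the $q$-th roots of unity and each of order at most $|\Phi^+|+1$; the extreme case is $\gamma$ the identity, where $\chi_{\pi_k}(e)=\dim V_{\pi_k}$ is the Weyl dimension polynomial, of degree $\#\{\alpha\in\Phi^+:\langle\Lambda,\alpha\rangle\ne0\}\le|\Phi^+|$, and for singular $\gamma$ the bound holds with $\Phi^+$ replaced by the positive roots of the centralizer of $\gamma$. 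A partial-fraction expansion then gives the claim, since on $k\equiv j\pmod q$ each $\zeta^k$ equals the constant $\zeta^j$ and each $\binom{k+m-1}{m-1}$ with $m\le|\Phi^+|+1$ is a polynomial in $k$ of degree at most $|\Phi^+|$. Averaging over $\gamma\in\Gamma$ preserves degrees, so $k\mapsto n_{\Gamma}(\pi_k)$ is, on each progression modulo $q$, a polynomial of degree at most $|\Phi^+|$.

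It then remains to conclude by interpolation. A polynomial of degree at most $|\Phi^+|$ is determined by its values at any $|\Phi^+|+1$ distinct points, and condition \eqref{eq:finitecondition} guarantees, for every residue $j$, at least $|\Phi^+|+1$ indices $k\in\mathcal A$ with $k\equiv j\pmod q$. Hence the values $\{n_{\Gamma}(\pi_k):k\in\mathcal A\}$ pin down the interpolating polynomial on each class $j+q\Z$, and therefore $n_{\Gamma}(\pi_k)$ for every $k\equiv j\pmod q$; letting $j$ range over $0,\dots,q-1$ recovers all $n_{\Gamma}(\pi_k)$. The final assertion is then immediate: if $n_{\Gamma_1}(\pi_k)=n_{\Gamma_2}(\pi_k)$ for all $k\in\mathcal A$, with $|\Gamma_1|$ and $|\Gamma_2|$ both dividing $q$, the two sequences carry identical interpolation data on each progression and so coincide for all $k\ge0$.

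I expect the main obstacle to be the uniform pole-order bound $|\Phi^+|+1$ over all $\gamma\in\Gamma$ --- that is, controlling $\chi_{\pi_k}(\gamma)$ for singular $\gamma$, where the naive Weyl formula is indeterminate and the effective degree is governed by the root subsystem of the centralizer rather than by $\Phi^+$ itself. Making this precise, and verifying that the centralizer contribution never exceeds $|\Phi^+|$ for each group and each spherical string in \eqref{eq:hatG_k}, is exactly the content of \cite[Thm.~1.2]{LM-strongmultonethm}, which I would invoke to close the argument rather than reprove case by case.
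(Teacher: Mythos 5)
Your argument is correct and is essentially the route the paper takes: the paper does not prove Proposition~\ref{prop:strongmultone} from scratch but cites \cite[Thm.~1.2]{LM-strongmultonethm} and sketches exactly your two steps, namely that $F_\Gamma(z)=\sum_k n_\Gamma(\pi_k)z^k$ equals $p_\Gamma(z)/(1-z^q)^{|\Phi^+|+1}$ with $\deg p_\Gamma<q(|\Phi^+|+1)$ --- which is precisely your statement that $k\mapsto n_\Gamma(\pi_k)$ is a quasi-polynomial of period $q$ and degree at most $|\Phi^+|$ on each residue class --- followed by recovering the coefficients from the values on $\mathcal A$, i.e.\ your interpolation step. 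Your extra unpacking of step one via Weyl's character formula (with the singular-element/centralizer caveat you correctly flag and defer to \cite{LM-strongmultonethm}) is a faithful account of what that reference does, so there is no substantive divergence.
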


The first step in the proof (see \cite[Prop.~2.2]{LM-strongmultonethm}) is to show that there is a polynomial $p_\Gamma(z)$ of degree less than $q(|\Phi^+|+1)$ such that
\begin{equation}
F_{\Gamma}(z):= \sum_{k\geq0} n_{\Gamma}(\pi_k)\,z^k 
=\frac{p_{\Gamma}(z)}{(1-z^q)^{|\Phi^+|+1}}. 
\end{equation}
The second one (see \cite[Prop.~3.1]{LM-strongmultonethm}) is to expand the right hand side of the above identity and, from the identities corresponding to the $k$-th term for $k\in\mathcal A$, obtain the values of all coefficients of $p_{\Gamma}(z)$ in terms of $n_{\Gamma}(\pi_k)$ for $k\in\mathcal A$. 

Here is an immediate consequence. 

\begin{corollary}
Let $X$ be a simply connected compact rank one symmetric space realized as in \eqref{eq:CROSSrealizations}
Given a positive integer $q$, let $\mathcal A$ be as in Proposition~\ref{prop:strongmultone}. 
Then, for any finite subgroup $\Gamma$ of $G$ with $q$ divisible by $|\Gamma|$, the finite set of multiplicities $\mult_{\Gamma}(\pi_k)$ of $\lambda_k$ in $\Spec(\Gamma\ba X)$ for each $k\in\mathcal A$ determine the spectrum $\Spec(\Gamma\ba X)$ of the orbifold $\Gamma\ba X$.
In particular, if $\Gamma_1,\Gamma_2$ are finite subgroups of $G$ with $q$ divisible by $|\Gamma_i|$ for $i=1,2$ such that $\mult_{\Gamma_1}(\pi_k) = \mult_{\Gamma_2}(\pi_k)$ for all $k\in\mathcal A$, then the orbifolds $\Gamma_1\ba X$ and $\Gamma_2\ba X$ are isospectral. 
\end{corollary}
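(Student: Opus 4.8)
The plan is to derive this Corollary as a direct translation of Proposition~\ref{prop:strongmultone} into the language of the Laplace spectrum, using the explicit description of $\Spec(\Gamma\ba X)$ recorded in \eqref{eq:Spec(X/Gamma)} and the multiplicity formula \eqref{eq:mult_Gamma(lambda)}. The key observation that makes this immediate is that for a CROSS realized as in \eqref{eq:CROSSrealizations}, the spherical representations form a single string $\widehat G_K=\{\pi_k:k\geq0\}$ with $\dim V_k^K=1$, and the eigenvalues $\lambda_k$ given in \eqref{eq:lambda_k} are pairwise distinct. Consequently the multiplicity of $\lambda_k$ in $\Spec(\Gamma\ba X)$ is exactly $\mult_\Gamma(\lambda_k)=n_\Gamma(\pi_k)\dim V_k^K=n_\Gamma(\pi_k)$; that is, the Laplace multiplicities and the representation-theoretic multiplicities $n_\Gamma(\pi_k)$ carry precisely the same information.

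First I would make explicit that, since the map $k\mapsto\lambda_k$ is injective (each of the four formulas in \eqref{eq:lambda_k} is strictly increasing in $k$ for $k\geq0$), knowing the full spectrum $\Spec(\Gamma\ba X)$ is equivalent to knowing the sequence $(n_\Gamma(\pi_k))_{k\geq0}$, and knowing the multiplicity of $\lambda_k$ for a single $k$ is equivalent to knowing $n_\Gamma(\pi_k)$. I would phrase the statement's notation $\mult_\Gamma(\pi_k)$ as shorthand for the multiplicity of the eigenvalue $\lambda_k$, which by the above equals $n_\Gamma(\pi_k)$. With this dictionary in place, the hypothesis that the finite set $\{\mult_\Gamma(\pi_k):k\in\mathcal A\}$ is known becomes exactly the hypothesis that $\{n_\Gamma(\pi_k):k\in\mathcal A\}$ is known.

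Next I would invoke Proposition~\ref{prop:strongmultone} verbatim: for $\Gamma$ a finite subgroup of $G$ with $|\Gamma|$ dividing $q$ and $\mathcal A$ satisfying the covering condition \eqref{eq:finitecondition}, the values $n_\Gamma(\pi_k)$ for $k\in\mathcal A$ determine $n_\Gamma(\pi_k)$ for all $k\geq0$. Translating back through the dictionary, the finitely many multiplicities $\mult_\Gamma(\pi_k)$ for $k\in\mathcal A$ determine all of them, hence determine $\Spec(\Gamma\ba X)$ by \eqref{eq:Spec(X/Gamma)}. For the second assertion, if $\Gamma_1,\Gamma_2$ both have order dividing $q$ and $\mult_{\Gamma_1}(\pi_k)=\mult_{\Gamma_2}(\pi_k)$ for all $k\in\mathcal A$, then $n_{\Gamma_1}(\pi_k)=n_{\Gamma_2}(\pi_k)$ for $k\in\mathcal A$, so the second part of Proposition~\ref{prop:strongmultone} yields $n_{\Gamma_1}(\pi_k)=n_{\Gamma_2}(\pi_k)$ for all $k\geq0$, and \eqref{eq:Spec(X/Gamma)} gives $\Spec(\Gamma_1\ba X)=\Spec(\Gamma_2\ba X)$.

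There is essentially no hard step here: the whole content is packaged in Proposition~\ref{prop:strongmultone}, and the Corollary is a reformulation. The only point requiring a sentence of care is the identification of the eigenvalue multiplicity with $n_\Gamma(\pi_k)$, which rests on two facts specific to the CROSS setting that are already established in the excerpt, namely $\dim V_k^K=1$ and the distinctness of the $\lambda_k$. I would state these two facts as the justification and then let the proof consist of a single sentence applying Proposition~\ref{prop:strongmultone}.
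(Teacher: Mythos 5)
Your proposal is correct and matches the paper's intent exactly: the paper states this corollary as an ``immediate consequence'' of Proposition~\ref{prop:strongmultone}, and the implicit argument is precisely your dictionary $\mult_\Gamma(\lambda_k)=n_\Gamma(\pi_k)$, justified by $\dim V_k^K=1$ and the pairwise distinctness of the $\lambda_k$ in \eqref{eq:lambda_k}, followed by a verbatim application of the proposition. Nothing is missing.
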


We are now in position to prove the main theorem of this section. 

\begin{proof}[Proof of Theorem~\ref{thm1:finitespectrum}]
We set $q=\lfloor \frac{1}{\varepsilon}\rfloor!$ and 
\begin{align}
N&=1+\sum_{k=0}^{q(|\Phi^+|+1)} \dim V_k.  
\end{align}

Let $\Gamma_1,\Gamma_2$ be finite subgroups of $G$ such that $q_i:=|\Gamma_i|\leq \frac{1}{\varepsilon}$ for $i=1,2$, thus $q_i$ divides $q$ and $\frac{\vol(\Gamma_i\ba X)}{\vol(X)}>\varepsilon$.
The converse is obvious, so we assume that the first $N$ eigenvalues of $\Gamma_1\ba X$ and $\Gamma_2\ba X$ coincide. 

We write $\Spec(\Gamma_i\ba X)$ as $0=\lambda_0(\Gamma_i\ba X)< \lambda_1(\Gamma_i\ba X)\leq\lambda_2(\Gamma_i\ba X)\leq \dots\leq \lambda_j(\Gamma_i\ba X)\leq \dots$ counted with multiplicities. 
By \eqref{eq:Spec(X/Gamma)}, it follows that 
\begin{equation}\label{eq:reasoning}
\begin{aligned}
\lambda_0&=\lambda_0(\Gamma_i\ba X),\\
\lambda_1&=\lambda_j(\Gamma_i\ba X)
	\quad \text{ for }1\leq j\leq n_{\Gamma_i}(\pi_1),\\
\lambda_2&=\lambda_j(\Gamma_i\ba X)
	\quad \text{ for }1\leq j-n_{\Gamma_i}(\pi_1)\leq n_{\Gamma_i}(\pi_2),\\
&\;\;\vdots \\
\lambda_k&=\lambda_j(\Gamma_i\ba X)
	\quad \text{ for }1\leq j-{\textstyle \sum\limits_{l=1}^{k-1}} n_{\Gamma_i}(\pi_l)\leq n_{\Gamma_i}(\pi_k).
\end{aligned}
\end{equation}
Recall that $\lambda_k$ is explicitly given in \eqref{eq:lambda_k} for any $k\geq0$. 
Now, since 
\begin{equation}
\lambda_j(\Gamma_1\ba X)=\lambda_j(\Gamma_2\ba X)
\quad\forall\, j=0,\dots, N-1
=\sum\limits_{k=0}^{q(|\Phi^+|+1)} n_{\Gamma_i}(\pi_k),
\end{equation}
we obtain that $n_{\Gamma_1}(\pi_k)=n_{\Gamma_2}(\pi_k)$ for all $k=0,\dots, q(|\Phi^+|+1)$.
Clearly, $\mathcal A:=\{k\in\N_0: k\leq q(|\Phi^+|+1)\}$ satisfies \eqref{eq:finitecondition}. 
Since $q_1$ and $q_2$ divide $q$, Proposition~\ref{prop:strongmultone} implies that $n_{\Gamma_1}(\pi_k)= n_{\Gamma_2}(\pi_k)$ for all $k\geq0$, therefore $\Spec(\Gamma_1\ba X) = \Spec(\Gamma_2\ba X)$. 
\end{proof}

The next example shows that the condition $\frac{\vol(\Gamma_i\ba X)}{\vol(X)}>\varepsilon$ cannot be omitted in Theorem~\ref{thm1:finitespectrum}. 

\begin{example}\label{ex:omittingvolume}
Let $N$ be an arbitrary positive integer. 
We will show that there are $3$-dimensional lens orbifolds whose first $N$ eigenvalues coincide but which are not isospectral. 

We set $X=S^3$, thus $G=\SO(4)$, $n=2$.
Let $q$ be the smallest integer divisible by $4$ such that 
\begin{equation}
N\leq 
\frac{q(q+4)}{16}=
\sum_{k=0}^{\frac q2-1} \big(\lfloor\tfrac k2\rfloor+1\big)
. 
\end{equation}  
For any divisor $d$ of $q$, let $L_d=L(q;0,d)$ and $\Gamma_d=\Gamma_{q;(0,d)}$, so $L_d=S^3/\Gamma_d$. 
It is evident from Proposition~\ref{prop:lens-isometries} that these lens orbifolds are pairwise non-isometric.

One can easily check that the associated congruence lattice $\mathcal L_d$ of $L_d$ (see \eqref{eq:congruencelattice}) is given by 
\begin{equation}
\mathcal L_d=\{(a,b)\in\Z^2: \tfrac{q}{d}\mid b\} =\Z\times\tfrac{q}{d}\Z. 
\end{equation}
This implies that they are mutually non-isospectral by \eqref{eq:F_L-thetafunction}. 

We claim that the first $N$ eigenvalues of the Laplacians on $L_1$ and $L_2$ coincide. 
Since $\mathcal L_1=\Z\times q\Z$ and $\mathcal L_2=\Z\times \tfrac{q}{2}\Z$, it is clear that $N_{\mathcal L_1}(k)=N_{\mathcal L_2}(k)=1$ for all $k<\tfrac q2$. 
Now, \eqref{eq:dimH_k^Gamma-onenorm} immediately implies that
\begin{equation}
n_{\Gamma_i}(\pi_k)=\dim V_{\pi_k}^{\Gamma_i}
=\lfloor\tfrac k2\rfloor+1
\qquad\forall \,k<\tfrac q2, \text{ for $i=1,2$.}
\end{equation} 
The same reasoning in \eqref{eq:reasoning} tells us that $\lambda_j(L_1)=\lambda_j(L_2)$ for all $j$ satisfying
\begin{equation}
0\leq j\leq 
\sum_{k=0}^{\frac q2-1} n_{\Gamma_i}(\pi_k)
=\sum_{k=0}^{\frac q2-1} \big(\lfloor\tfrac k2\rfloor+1\big)
\end{equation}  
and the assertion follows. 

\end{example}

\begin{remark}
Although we have considered only the Laplace-Beltrami operator acting on functions on $\Gamma\ba X$, the discussion can be extended to the standard Laplacian acting on smooth sections of homogeneous vector bundles of CROSSes. 
Among them, we have the Hodge-Laplace operator acting on smooth $p$-forms, and the Lichnerowicz Laplacian acting on (symmetric) $k$-tensors.
\end{remark}

\begin{remark}
Can Theorem~\ref{thm1:finitespectrum} be extended to some (or all) compact symmetric spaces of rank at least two? 
The main difficulty for such a symmetric space $G/K$ is that $\widehat G_K$ cannot be written as a finite union of strings, in the sense of \cite[Def.~2.1]{LM-strongmultonethm} (see also \cite[Rmks.~5.3 and 5.4]{LM-strongmultonethm}). 
\end{remark}

\section{Isospectral towers of lens spaces} \label{sec:towers}

In this section we will construct isospectral towers of lens spaces in every odd dimension $\geq 5$. Our construction will make extensive use of some terminology defined by Doyle and DeFord~\cite{DeFordDoyle18}. 
Let $r> 2, t\geq 1$ be positive integers.

\begin{definition} We say that $a\in\Z^n$ is 
\begin{itemize}
\item {\it univalent mod $r$} if its entries are distinct mod $r$,
\item {\it reversible mod $r$} if there exists $c\in\Z_r$ such that $a+c$ and $-a$ are equal as multisets mod $r$,
\item {\it good mod $r$} if it is univalent or reversible mod $r$,
\item {\it hereditarily good mod $r$} if it is good mod $d$ for all divisors $d$ of $r$, and 
\item {\it useful mod $r$} if it is hereditarily good and irreversible mod $r$.
\end{itemize}
\end{definition}

\begin{remark}
Every $a\in \Z^n$ is reversible (and hence good) mod $1,2$. So to check whether or not $a$ is hereditarily good we need only consider divisors $d$ of $r$ for which $d>2$. In particular, when $r$ is prime $a\in\Z^n$ is hereditarily good mod $r$ if and only if it is good mod $r$.
\end{remark}

\begin{example}
Let $a=(1,3,6)$. Then $a$ is 
\begin{itemize}
\item univalent mod $r$ when $r\not\in \{1,2,3,5\}$,
\item reversible mod $r$ when $r\in \{1,2,4,7,8\}$,
\item good mod $r$ when $r\neq 3,5$,
\item hereditarily good mod $r$ when $r$ is not divisible by $3$ or $5$, and
\item useful mod $r$ for any $r\geq 11$ not divisible by $3$ or $5$. 
\end{itemize}
\end{example}

Given $r,t,a$ as above, we define the lens space $\LMR(r,t,a)$ by \[\LMR(r,t,a) = L(r^2t; rta_1+1, \dots, rta_n+1).\]
The main theorem of \cite{DeFordDoyle18} is:

\begin{theorem}\label{thm:Doyle}
If $a\in\Z^n$ is hereditarily good mod $r$, then for any $t$ the lens spaces $\LMR(r,t,a)$ and $\LMR(r,t,-a)$ are isospectral.
\end{theorem}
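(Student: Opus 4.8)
The plan is to translate isospectrality into an identity between the one-norm theta functions of the two congruence lattices, to dispose of the ``reversible'' part by an explicit reindexing of the generating function, and to build the general case by an induction over the divisors of $r$ — which is exactly the structure that hereditary goodness encodes.

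First I would pass to lattices. By \eqref{eq:F_L-thetafunction}, two lens spaces of the same dimension are isospectral if and only if their congruence lattices have the same counting function $N_{\mathcal L}(k)$ for every $k$. Writing $q=r^2t$, $s_j=rta_j+1$, $s'_j=-rta_j+1$, and setting $B=\sum_j b_j$, $P=\sum_j a_jb_j$, the defining congruence $rtP+B\equiv 0\pmod{r^2t}$ splits into the common condition $rt\mid B$ together with $\tfrac{B}{rt}\equiv -P\pmod r$ for $\LMR(r,t,a)$ and $\tfrac{B}{rt}\equiv +P\pmod r$ for $\LMR(r,t,-a)$. The crucial observation is that $rt\equiv 0\pmod r$, so any additive shift of $a$ changes $P$ only by a multiple of $rt$ and hence disappears modulo $r$; this is precisely what makes the ``reversible up to a constant $c$'' hypothesis usable. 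Thus it suffices to produce a $\norma{\cdot}$-preserving bijection of $\{b:rt\mid B\}$ that fixes $B$ modulo $r\cdot rt$ and reverses the sign of $P$ modulo $r$.

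Second, I would reduce to a block-by-block statement using \eqref{eq:F_L-expression}. Grouping the sum over $h=0,\dots,q-1$ by $d=\gcd(h,r)$, one has $\xi_q^{hrta_j}=\xi_r^{ha_j}$, which has order $r/d$, so inside each block the dependence on $a$ is only through its reduction modulo $m:=r/d$; the terms with $r\mid h$ (the case $m=1$) are literally identical for $L$ and $L'$, giving the base case. On a block where $\xi_r^h$ is primitive of order $m$ and $a$ is reversible mod $m$ via a permutation $\sigma$ and shift $c$, the reindexing $h\mapsto(1-crt)h$ — a unit modulo $q$, since $1-crt\equiv 1$ modulo both $r$ and $t$ — together with $\sigma$ sends the $L$-term to the matching $L'$-term, so that block cancels. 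Iterating over all $d\mid r$ is legitimate exactly because $a$ is good modulo every divisor of $r$.

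Third, and this is where I expect the genuine work to lie, I would treat the blocks on which $a$ is univalent but not reversible modulo $m$. No global term-by-term matching realised by a single unit multiplication can exist there, since by Proposition~\ref{prop:lens-isometries} such a matching would make $\LMR(r,t,a)$ and $\LMR(r,t,-a)$ isometric, contrary to the purpose of the construction. Instead I would argue that univalence forces the two sign-classes $P\equiv\pm P_0\pmod m$ to be equinumerous inside each relevant one-norm shell, so that the corresponding blocks of the generating function agree after summation even though individual terms do not. Making this balancing precise, and verifying that the block-wise matchings assemble into the single identity $F_{\LMR(r,t,a)}=F_{\LMR(r,t,-a)}$, is the main obstacle; hereditary goodness is exactly the hypothesis guaranteeing that at each level $m\mid r$ one of the two mechanisms — the explicit reversible reindexing or the univalent balancing — is available.
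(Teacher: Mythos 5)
First, a point of reference: the paper does not prove this statement at all --- it is imported verbatim as ``the main theorem of \cite{DeFordDoyle18}'' --- so your attempt has to be judged on its own terms rather than against an in-paper argument. Your setup is sound and matches the natural (and the DeFord--Doyle) strategy: the translation of isospectrality into $F_{\LMR(r,t,a)}=F_{\LMR(r,t,-a)}$, the computation showing that the congruence $\sum_j b_j(rta_j+1)\equiv 0\pmod{r^2t}$ forces $rt\mid B$ and then reads $B/(rt)\equiv \mp P\pmod r$, the observation that the terms of \eqref{eq:F_L-expression} with $r\mid h$ coincide for the two spaces, and the treatment of a block on which $a$ is reversible mod $m=r/\gcd(h,r)$ by the unit substitution $h\mapsto(1\pm crt)h$ combined with the permutation $\sigma$ are all correct (one can check directly that $\xi_{r^2t}^{h(1+crt)}\xi_r^{h(1+crt)a_j}=\xi_{r^2t}^{h}\xi_r^{hc}\xi_r^{ha_j}$, which is exactly what the matching requires).

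The gap is the one you flag yourself: the blocks on which $a$ is univalent but irreversible mod $m$ are the entire content of the theorem, and for them you offer only the unproven assertion that ``the two sign-classes $P\equiv\pm P_0\pmod m$ are equinumerous inside each one-norm shell.'' This cannot be waved through. Note that the obvious symmetry $b\mapsto -b$ is useless here (it preserves each congruence lattice, as any lattice symmetry must), and since $a$ irreversible mod $r$ means precisely that $\LMR(r,t,a)$ and $\LMR(r,t,-a)$ are \emph{not} isometric, no coordinate permutation or unit rescaling of $\Z^n$ can realize the desired norm-preserving bijection between $\mathcal L(r^2t;rta+1)$ and $\mathcal L(r^2t;-rta+1)$; whatever balancing argument you have in mind must be genuinely non-geometric (in \cite{DeFordDoyle18} it rests on a careful analysis of the sum over the $h$ in a fixed block, using univalence of the residues $ha_j$ mod $m$). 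You also silently switch between the lattice formulation \eqref{eq:F_L-thetafunction} in your first step and the root-of-unity sum \eqref{eq:F_L-expression} in your second and third, without completing the argument in either language or explaining how the shell-by-shell equinumerosity claim would be extracted from the block decomposition. Until the univalent case is actually proved, the argument establishes only the (trivial, by Proposition~\ref{prop:lens-isometries}) reversible case.
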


\begin{remark}
The lens spaces $\LMR(r,t,a)$ and $\LMR(r,t,-a)$ in Theorem~\ref{thm:Doyle} are actually \emph{$p$-isospectral for all $p$}.
This means that the Hodge-Laplace operator acting on $p$-forms on each of them has the same spectrum for all $p=0,\dots,2n-1$. 
\end{remark}

Proposition~\ref{prop:lens-isometries} tells us that $a$ is reversible mod $r$ precisely when the lens spaces $\LMR(r,t,a)$ and $\LMR(r,t,-a)$ are isometric and hence trivially isospectral. The theorem is thus only interesting in the case that $a$ is hereditarily good and irreversible mod $r$. This is what prompted DeFord and Doyle to call such $a$ {\it useful}: when $a$ is useful mod $r$ the lens spaces $\LMR(r,t,a)$ and $\LMR(r,t,-a)$ will be isospectral but not isometric.

\begin{definition} We say that $a\in\Z^n$ is self-reversing mod $r$ if $a$ and $-a$ are equal as multisets mod $r$.
\end{definition}

\begin{lemma}\label{lemma:sumtozero}
Let $n\geq 3$ and $r>2$ be coprime positive integers and $a\in \Z^n$. If $a$ is not self-reversing mod $r$ and the sum of its entries is divisible by $r$, then $a$ is irreversible mod $r$.
\end{lemma}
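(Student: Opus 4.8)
The plan is to argue the contrapositive: assuming that $a$ is reversible mod $r$ and that $\sum_{i} a_i \equiv 0 \pmod r$, I would show that $a$ is forced to be self-reversing mod $r$, contradicting the hypothesis. The entire argument rests on the elementary fact that two equal multisets in $\Z_r$ have the same sum of elements.

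First I would unwind the definition of reversibility. If $a$ is reversible mod $r$, there is some $c \in \Z_r$ with the multiset identity $\{a_i + c\} = \{-a_i\}$ in $\Z_r$. Since equal multisets have equal sums, comparing the sums of both sides yields
\[
\sum_i a_i + nc \equiv -\sum_i a_i \pmod r .
\]
Next I would feed in the hypothesis $\sum_i a_i \equiv 0 \pmod r$, which collapses this congruence to $nc \equiv 0 \pmod r$. This is exactly where the coprimality of $n$ and $r$ is used: since $\gcd(n,r)=1$, the class of $n$ is invertible in $\Z_r$, and therefore $c \equiv 0 \pmod r$. But then the multiset identity reads $\{a_i\} = \{-a_i\}$ in $\Z_r$, i.e.\ $a$ is self-reversing mod $r$, contradicting our standing assumption. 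This establishes the contrapositive and hence the lemma.

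As for the main obstacle, there is no deep one: the argument is short and linear. The single step that carries all the weight is the cancellation of $n$ in $nc \equiv 0 \pmod r$, which is precisely why the coprimality hypothesis $\gcd(n,r)=1$ cannot be dropped (otherwise a nonzero shift $c$ could survive and reversibility need not fail). I would only double-check that the ambient standing assumptions $n \geq 3$ and $r > 2$ are not secretly needed: the core computation consumes only $\gcd(n,r)=1$ and $\sum_i a_i \equiv 0 \pmod r$, so these bounds appear to be inherited from the lens-space context rather than essential to the proof.
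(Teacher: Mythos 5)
Your proof is correct and coincides with the paper's own argument: both sum the multiset identity $\{a_i+c\}=\{-a_i\}$ over all entries to get $2S+nc\equiv 0\pmod r$, use $S\equiv 0$ and the invertibility of $n$ modulo $r$ to force $c\equiv 0$, and conclude that $a$ would be self-reversing. Your closing observation that only $\gcd(n,r)=1$ is really consumed (not the bounds $n\geq 3$, $r>2$) is accurate.
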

\begin{proof}
If $a$ is reversible then there exists $c\in\Z_r$ such that $a+c$ and $-a$ are equal as multisets mod $r$. In this case for every $i\in \{1,\dots,m\}$ there exists $j\in\{1,\dots, m\}$ such that \[a_i+c\equiv -a_j\pmod{r}.\] If we sum over all $i$ then we get \[2S+nc\equiv 0 \pmod{r},\] where $S=\sum_{i=1}^n a_i$. By hypothesis $S\equiv 0\pmod{r}$, hence $nc\equiv 0 \pmod{r}$. Since $n$ is coprime to $r$, it is invertible modulo $r$ and consequently we find that $c\equiv 0 \pmod{r}$. But this means that $a$ is self-reversing mod $r$, a contradiction.
\end{proof}

Ikeda's families of isospectral lens spaces show that there are isospectral lens spaces of arbitrarily high dimension. Although it is well known to experts that there are in fact isospectral lens spaces in every odd dimension at least $5$, this result has not, to our knowledge, ever appeared in the literature. Below we provide a proof.

\begin{proposition}\label{prop:lenseverydimension}
In every odd dimension at least $5$, there are pairs of non-isometric lens spaces that are isospectral (and $p$-isospectral for all $p$).
\end{proposition}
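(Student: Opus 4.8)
The plan is to read off the required pairs directly from the DeFord--Doyle machinery, namely Theorem~\ref{thm:Doyle} together with Lemma~\ref{lemma:sumtozero}. Since $\LMR(r,t,a)$ with $a\in\Z^n$ is a quotient of $S^{2n-1}$, every odd dimension $\geq 5$ has the form $2n-1$ with $n\geq 3$; so it suffices to produce, for each fixed $n\geq 3$, a modulus $r$ and a vector $a\in\Z^n$ that is \emph{useful} mod $r$. Once such an $a$ is in hand, Theorem~\ref{thm:Doyle} (and the remark following it on $p$-isospectrality) makes $\LMR(r,t,a)$ and $\LMR(r,t,-a)$ isospectral and $p$-isospectral for all $p$, while irreversibility together with Proposition~\ref{prop:lens-isometries} forces them to be non-isometric. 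Taking $t=1$ then delivers the pair in dimension $2n-1$, proving the proposition.

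To build a useful vector I would take $r$ to be any prime with $r>n^2+n$ and set
\[
a=(1,2,\dots,n-1,m),\qquad m\equiv-\tbinom{n}{2}\pmod r,
\]
so that the entries sum to $0$ modulo $r$. Because $r>n^2+n$ one has $\tfrac{n(n-1)}{2}+j<r$ for every $1\le j\le n-1$, so $m$ is distinct from each of $1,\dots,n-1$ and hence $a$ is univalent, and therefore good, mod $r$. Since $r$ is prime, good and hereditarily good coincide (by the remark after Theorem~\ref{thm:Doyle}), so $a$ is hereditarily good mod $r$, which is exactly the hypothesis of Theorem~\ref{thm:Doyle}.

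It remains to establish irreversibility, and here I would apply Lemma~\ref{lemma:sumtozero}: as $r>n$ is prime we have $\gcd(r,n)=1$ and $r>2$, and the entries already sum to $0$ mod $r$, so the only thing left to verify is that $a$ is not self-reversing. For this I would argue on a single entry: in the multiset $\{-1,\dots,-(n-1),-m\}$ the value $1$ can occur only as $-m$, since $1\equiv-k$ would force $k\equiv r-1>n-1$. Thus $a$ is self-reversing only if $m\equiv-1$, i.e.\ $\tbinom{n}{2}\equiv 1$, i.e.\ $(n-2)(n+1)\equiv 0\pmod r$; but $0<(n-2)(n+1)=n^2-n-2<r$, so this cannot happen. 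Hence $a$ is not self-reversing, Lemma~\ref{lemma:sumtozero} yields irreversibility, and $a$ is useful mod $r$, completing the argument.

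The substantive point — and the only step requiring any care — is the non-self-reversing check, since that is precisely what guarantees non-isometry; the isospectrality is then immediate from Theorem~\ref{thm:Doyle}. The uniform choice $r>n^2+n$ is deliberately crude: it simultaneously secures univalence and excludes the coincidence $\tbinom{n}{2}\equiv 1$, and a sharper analysis would permit much smaller $r$, but for an existence statement this bound is more than enough.
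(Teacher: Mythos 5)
Your proposal is correct and follows essentially the same route as the paper: the same vector $a=(1,2,\dots,n-1,\,r-\tbinom{n}{2})$ with $r$ a sufficiently large prime, the same use of Lemma~\ref{lemma:sumtozero} to get irreversibility from the zero-sum and non-self-reversing conditions, and the same appeal to Theorem~\ref{thm:Doyle} for isospectrality. You are in fact slightly more explicit than the paper about the univalence and hereditarily-good checks, but the argument is the same.
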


\begin{proof}
We fix $n\geq3$. 
Let $r>n^2$ be a prime number and observe that \[a=\left(1,2,\dots, n-1, r-\frac{n(n-1)}{2}\right)\] is not self-reversing mod $r$ (since $a_1=1$ and no entry of $a$ is equal to $r-1$) and  satisfies that the sum of its entries is $0$ mod $r$ since this sum will in fact be equal to $r$. It follows from Theorem~\ref{thm:Doyle} that the lens spaces $\LMR(r,t,a)$ and $\LMR(r,t,-a)$ will be isospectral but not isometric for all $t$.
\end{proof}

It is interesting to note that when $\gcd(n,r)=1$, all tuples $a\in\Z^n$ which are useful mod $r$ yield lens spaces isometric to those produced by tuples whose sum of entries is congruent to $0$ mod $r$.

\begin{lemma}
Let $n\geq 3$ and $r>2$ be coprime positive integers and $t>1$ be any positive integer. If $a\in \Z^n$ is useful mod $r$ then there exists $b\in\Z^n$ such that:
\begin{enumerate}
\item $b$ is useful mod $r$,
\item the sum of the entries of $b$ is $0$ mod $r$, and
\item the lens spaces $\LMR(r,t,a)$ and $\LMR(r,t,b)$ are isometric.
\end{enumerate}
\end{lemma}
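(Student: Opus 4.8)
The plan is to exploit the unit $u=1+rtm$ of $(\Z/r^2t\,\Z)^\times$, which turns out to realize a coordinatewise translation of the parameter $a$. First I would record the arithmetic identity. Writing $q=r^2t$ and $s_i=rta_i+1$, for any integer $m$ the number $u=1+rtm$ is coprime to $q$: since $u\equiv 1\pmod{rt}$ we get $\gcd(u,r)=\gcd(u,t)=1$, hence $\gcd(u,r^2t)=1$. Moreover
\begin{equation*}
u\,s_i=(1+rtm)(rta_i+1)\equiv rt(a_i+m)+1\pmod{q},
\end{equation*}
because the cross term $r^2t^2m a_i$ is divisible by $q$. By Proposition~\ref{prop:lens-isometries} (taking the identity permutation, all signs $+1$, and the unit $u$ as the multiplier), scaling every coordinate of the parameter vector by a common unit mod $q$ produces an isometric lens space. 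Therefore $\LMR(r,t,a)$ is isometric to $\LMR(r,t,a+m(1,\dots,1))$ for every $m\in\Z$, so the only operation I need is the shift $a\mapsto a+m(1,\dots,1)$.

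Next I would check that usefulness mod $r$ is invariant under such a shift. Univalence mod $d$ is clearly unchanged, since adding a constant to all entries preserves their pairwise distinctness. For reversibility mod $d$, if $a+c$ and $-a$ agree as multisets mod $d$, then $a+m(1,\dots,1)$ is reversible with reversing constant $c-2m$ (and the converse is identical); hence reversibility mod $d$, and with it goodness, hereditary goodness, and irreversibility mod $r$, are all preserved. Consequently $a+m(1,\dots,1)$ is useful mod $r$ whenever $a$ is.

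Finally, I would choose the shift so as to annihilate the sum mod $r$. The sum of the entries of $b:=a+m(1,\dots,1)$ equals $\sum_{i=1}^{n}a_i+nm$, so I need
\begin{equation*}
nm\equiv -\sum_{i=1}^{n}a_i\pmod r.
\end{equation*}
Since $\gcd(n,r)=1$, the residue $n$ is invertible mod $r$ and this congruence has a solution $m$. For this $m$, the vector $b$ satisfies (2) by construction, satisfies (1) by the shift-invariance of usefulness, and satisfies (3) by the isometry recorded above.

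The argument is essentially computational once the right move is isolated; the single decisive step is recognizing that the unit $1+rtm$ acts on the DeFord--Doyle parameter exactly as the translation $a\mapsto a+m(1,\dots,1)$, after which everything reduces to the trivial shift-invariance of the combinatorial conditions and one solvable congruence. I do not foresee a genuine obstacle, but the point requiring care is verifying that $u=1+rtm$ is a unit modulo $r^2t$ and not merely modulo $rt$, as this is precisely what legitimizes the appeal to Proposition~\ref{prop:lens-isometries}.
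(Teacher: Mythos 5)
Your proof is correct and follows essentially the same route as the paper: shift $a$ by a constant $c$ with $c \equiv -n^{-1}S \pmod r$, which exists because $\gcd(n,r)=1$, and observe that the shift preserves usefulness and yields an isometric lens space. The only differences are that you supply the explicit unit $u=1+rtc$ justifying the isometry (which the paper calls trivial) and you prove irreversibility of $b$ by direct shift-invariance of reversibility rather than by invoking Lemma~\ref{lemma:sumtozero}; both are fine.
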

\begin{proof}
Let $c\in\Z_r$. The sum of the entries of $a+c=(a_1+c,\dots, a_n+c)$ is $S+nc$ where $S$ is the sum of the entries of $a$. Because $n$ is coprime to $r$, it is invertible modulo $r$. Let $m\in \Z$ be such that $mn\equiv 0 \pmod{r}$. A straightforward argument now shows that if $c\equiv -mS\pmod{r}$ then the sum of the entries of $a+c$ will be $0$ mod $r$. Defining $b=a+c$ we see that (2) and (3) are trivially satisfied. To prove (1), note that Lemma \ref{lemma:sumtozero}  shows that $b$ is irreversible, while $b$ is hereditarily good mod $r$ since $a$ is hereditarily good mod $r$ and $b=a+c$.
\end{proof}

\begin{theorem}
Let $n\geq 3$. There exist infinitely many pairs of descending isospectral towers of lens spaces in dimension $2n-1$.
\end{theorem}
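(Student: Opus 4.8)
The plan is to feed the isospectral, non-isometric pairs $\LMR(r,t,a)$ and $\LMR(r,t,-a)$ of Theorem~\ref{thm:Doyle} into a divisibility chain of the parameter $t$, chosen so that the underlying cyclic groups nest. Fix $n\geq 3$ and a prime $r>n^2$, and let $a=\big(1,2,\dots,n-1,\,r-\tfrac{n(n-1)}{2}\big)\in\Z^n$ be the tuple from the proof of Proposition~\ref{prop:lenseverydimension}; it is univalent mod $r$ (hence, $r$ being prime, hereditarily good mod $r$) and, being not self-reversing with entries summing to $0$ mod $r$, it is irreversible mod $r$ by Lemma~\ref{lemma:sumtozero}. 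Thus $a$ is useful mod $r$. Set $t_j=(r+1)^j$ for $j\in\N_0$ and define
\[
M_j=\LMR(r,t_j,a),\qquad N_j=\LMR(r,t_j,-a).
\]

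First I would record that each $M_j,N_j$ is a genuine $(2n-1)$-dimensional lens space: writing $M_j=S^{2n-1}/\Gamma_{q_j;s^{(j)}}$ with $q_j=r^2t_j$ and $s^{(j)}_i=rt_ja_i+1$, one has $s^{(j)}_i\equiv1\pmod{rt_j}$ and $s^{(j)}_i\equiv1\pmod r$, so $\gcd(s^{(j)}_i,q_j)=1$ and the action is free. Since $a$ is hereditarily good mod $r$, Theorem~\ref{thm:Doyle} gives that $M_j$ and $N_j$ are isospectral (indeed $p$-isospectral for all $p$) for every $j$; since $a$ is irreversible mod $r$, Proposition~\ref{prop:lens-isometries} shows they are never isometric. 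Hence $(M_j,N_j)$ is an isospectral, non-isometric pair for each $j$.

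The key step is to produce the covering maps. For $j<j'$ write $u=t_{j'}/t_j=(r+1)^{j'-j}$, so $q_{j'}=u\,q_j$. The unique order-$q_j$ subgroup of $\Gamma_{q_{j'};s^{(j')}}=\langle\gamma\rangle$ is generated by $\gamma^{u}$, whose $i$th rotation block is $R\big(2\pi u\,s^{(j')}_i/q_{j'}\big)=R\big(2\pi s^{(j')}_i/q_j\big)$; hence this subgroup equals $\Gamma_{q_j;\,s^{(j')}\bmod q_j}$. A direct reduction modulo $q_j=r^2t_j$ gives $s^{(j')}_i\bmod q_j=rt_j\,(u a_i\bmod r)+1$, and since $u=(r+1)^{j'-j}\equiv1\pmod r$ we have $u a_i\equiv a_i\pmod r$, so $s^{(j')}_i\equiv s^{(j)}_i\pmod{q_j}$. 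Therefore the order-$q_j$ subgroup of $\Gamma_{q_{j'};s^{(j')}}$ is exactly $\Gamma_{q_j;s^{(j)}}=\pi_1(M_j)$, and the natural projection $M_j=S^{2n-1}/\Gamma_{q_j;s^{(j)}}\to S^{2n-1}/\Gamma_{q_{j'};s^{(j')}}=M_{j'}$ is a covering of degree $u$. The identical computation with $-a$ yields coverings $N_j\to N_{j'}$. Consequently $\{M_j\}_{j\in\N_0}$ and $\{N_j\}_{j\in\N_0}$, indexed by $\N_0$ with its usual order (so that $j<j'$ gives $M_j\to M_{j'}$), are descending towers, and by the previous paragraph they form a pair of isospectral towers.

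Finally, letting $r$ range over the infinitely many primes $>n^2$ produces infinitely many such pairs: the base spaces $M_0=\LMR(r,1,a)$ have volume $\vol(S^{2n-1})/r^2$, which are pairwise distinct, so the resulting tower pairs are genuinely different. The step I expect to require the most care is the choice of the chain $t_j=(r+1)^j$: it is exactly the congruence $t_{j'}/t_j\equiv1\pmod r$ that forces the nested cyclic subgroup to carry precisely the DeFord--Doyle parameters $s^{(j)}$, so that each level of the tower is honestly one of the isospectral lens spaces and the covers respect the pairing $M_j\leftrightarrow N_j$; with an arbitrary divisibility chain the residues $u a_i\bmod r$ would shift and the nested quotients would no longer be the $\LMR(r,t_j,\pm a)$ we need.
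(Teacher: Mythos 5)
Your proof is correct and follows essentially the same route as the paper: the DeFord--Doyle pairs $\LMR(r,t,a)$, $\LMR(r,t,-a)$ with $a=(1,2,\dots,n-1,r-\tfrac{n(n-1)}{2})$ and a chain $t_j=tk^j$ with $k\equiv 1\pmod r$ (you take $t=1$, $k=r+1$), driven by the same congruence $rt_{j'}a_i+1\equiv rt_ja_i+1\pmod{r^2t_j}$. The only differences are cosmetic but welcome: you exhibit the coverings directly as nested cyclic subgroups rather than via the paper's inductive ``isometric to a cover'' construction, and you get infinitely many tower pairs by varying $r$ instead of $t$.
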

\begin{proof}
We begin with the example from the proof of Proposition~\ref{prop:lenseverydimension}:
\[a=\left(1,2,\dots, n-1, r-\frac{n(n-1)}{2}\right),\] where $r$ is a fixed prime greater than $n^2$. As was mentioned earlier, for any $t$ the lens spaces $\LMR(r,t,a)$ and $\LMR(r,t,-a)$ are isospectral but not isometric. Let $k$ be any positive integer greater than $1$ satisfying $k\equiv 1 \pmod {r}$ and define $t_j=tk^j$. Thus $\LMR(r,t_j,a)$ and $\LMR(r,t_j,-a)$ are isospectral and not isometric for all $j\geq 0$.

\begin{nnclaim}
For any $i>j\geq 0$ the lens spaces $L(r^2t_j; rt_ia_1+1, \dots, rt_ia_n+1)$ and $\LMR(r,t_j,a)$ are isometric.
\end{nnclaim}
\begin{proof}
This follows from Proposition~\ref{prop:lens-isometries} since $rt_ia_s+1\equiv rt_ja_s+1\pmod{r^2t_j}$ for $s=1,\dots,n$, which in turn follows from the fact that $k\equiv 1 \pmod{r}$, as we now show. That $k\equiv 1 \pmod{r}$ and $i>j$ implies $k^{i-j}-1$ is divisible by $r$. Thus there is an integer $m$ such that $k^{i-j}-1=rm$. Then 
\begin{align*}
(rt_i a_s + 1) - (rt_ja_s+1) 
&= rtk^j a_s (k^{i-j}-1) 
= r^2t_j a_s m.  
\end{align*} 
In particular $r^2t_j$ divides $(rt_i a_s + 1) - (rt_ja_s+1)$ and $rt_ia_s+1\equiv rt_ja_s+1\pmod{r^2t_j}$\ as claimed.
\end{proof}

We now define our towers inductively. Let $M_0=\LMR(r,t_0,a)$ and $N_0=\LMR(r,t_0,-a)$. We have already seen that $M_0$ and $N_0$ are isospectral and not isometric. To construct $M_1$ and $N_1$, we note that the claim shows that $L(r^2t_0; rt_1a_1+1, \dots, rt_1a_n+1)$ and $\LMR(r,t_0,a)$ are isometric. Since the former is a finite degree cover of $\LMR(r,t_1,a)$, there is some lens space $M_1$ isometric to $\LMR(r,t_1,a)$ having $\LMR(r,t_0,a)$ as a finite degree cover. Similarly, we obtain a lens space $N_1$ isometric to $\LMR(r,t_1,-a)$ having $\LMR(r,t_1,-a)$ as a finite degree cover. Since isometric manifolds are trivially isospectral, the isospectrality of $M_1$ and $N_1$ follows from the isospectrality of $\LMR(r,t_1,a)$ and $\LMR(r,t_1,-a)$. 

Suppose now that we have constructed $M_{i-1}$ and $N_{i-1}$, and note that by construction these lens spaces will be isometric to $\LMR(r,t_{i-1},a)$ and $\LMR(r,t_{i-1},-a)$. By the claim, the lens space $L(r^2t_{i-1}; rt_ia_1+1, \dots, rt_ia_n+1)$ is isometric to $\LMR(r,t_i,a)$ and hence to $M_{i-1}$ as well. Therefore there is a lens space $M_i$ isometric to $\LMR(r,t_i,a)$ having $M_{i-1}$ as a finite degree cover. We similarly obtain a lens space $N_i$ isometric to $\LMR(r,t_i,-a)$ having $N_{i-1}$ as a finite degree cover. As above, $M_i$ and $N_i$ are isospectral but not isometric. Continuing in this manner yields the desired pair of descending isospectral towers $\{M_i\}$ and $\{N_i\}$.

It remains only to show that there are infinitely many such pairs of descending towers. This follows immediately from the fact that the above construction holds for any positive integer $t$. We can therefore obtain infinitely many towers by allowing $t$ to range over the set of prime numbers not equal to $r$ and not dividing $k$. 
\end{proof}


\section{Isospectrality between quotients of symmetric spaces} \label{sec:McReynolds}

In this section we discuss Question~\ref{prob:symspaces-manifolds}, which asks for the determination of the compact simply connected irreducible symmetric spaces that cover isospectral and non-isometric manifolds.  
Henceforth we will omit the fact that the isospectral covered manifolds are non-isometric, for the sake of conciseness. 

Unlike the non-compact type setting, the structure of locally symmetric spaces of compact type is quite rigid. 
In particular, the classification of manifolds covered by spheres, the so called spherical space forms, was a long process finished by Wolf in \cite{Wolf-book}. 
Wolf also classified in \cite[Ch.~9]{Wolf-book} the manifolds covered by compact symmetric spaces $G/K$ satisfying $\op{rank}(G)-\op{rank}(K)\leq 1$.  
We will use his results throughout the section. 

We now start discussing partial answers of Question~\ref{prob:symspaces-manifolds}, starting from the simplest case. 
There are many compact irreducible symmetric spaces that cannot cover isospectral pairs because they do not cover any manifold at all. 
For instance, this is the case for 
even dimensional real projective spaces $P^{2n}(\R)= \frac{\SO(2n+1)}{\Ot(2n)}$, 
quaternionic Grassmannian spaces $\frac{\Sp(m+n)}{\Sp(m)\times\Sp(n)}$ for $m>n\geq1$, 
complex Grassmannian spaces $\frac{\SU(m+n)}{\op{S}(\Ut(m)\times\Ut(n))}$ for $m>n\geq1$ with $mn$ even,
$\frac{\textup{E}_6}{\SU(6)\cdot \SU(2)}$,
and $\frac{\textup{E}_6}{\SO(10)\cdot \SO(2)}$
Similarly, the real Grassmannian spaces $\frac{\SO(m+n)}{\SO(m)\times\SO(n)}$ with $m+n$ odd, cover properly exactly one manifold with fundamental group of order $2$, hence these spaces do not cover isospectral manifolds.
The best known instance are the even dimensional spheres $S^{2n}=\frac{\SO(2n+1)}{\SO(2n)}$ which only cover properly $P^{2n}(\R)$.

Even dimensional Grassmannian spaces cover only finitely many manifolds. 
On the opposite side, odd dimensional Grassmannian spaces (i.e.\ $\frac{\SO(m+n)}{\SO(m)\times\SO(n)}$ with $mn$ odd) cover infinitely many manifolds. 
Ikeda is the main contributor to Question~\ref{prob:symspaces-manifolds} for these spaces. 
We summarize his results \cite[Thm.~II]{Ikeda80_3-dimI}, \cite[Thm.~4]{Ikeda83}, and \cite[Thm.~7]{Ikeda97}, in the following statement.

\begin{theorem}[Ikeda] \label{thm:Ikeda-existence}
Odd-dimensional spheres of dimension $\geq5$ and real Grassmannian spaces $\frac{\SO(m+n)}{\SO(m)\times\SO(n)}$ with $n\geq m>1$ satisfying $mn\equiv 1\pmod 2$ and $m+n\in\{2k:k=5\text{ or }k\geq7\}$, cover infinitely many isospectral and non-isometric pairs of manifolds. 
Furthermore, the $3$-dimensional sphere does not cover any pair of isospectral and non-isometric manifolds. 
\end{theorem}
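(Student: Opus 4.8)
The statement is a summary of Ikeda's work, so I would prove it by assembling the machinery developed earlier in the paper together with three separate case analyses. The claim splits into (a) infinitely many isospectral non-isometric covers for odd spheres of dimension $\geq 5$, (b) the same for the real Grassmannians in the stated range, and (c) the rigidity of $S^3$. Parts (a) and (c) are the most self-contained; part (b) rests on the heaviest representation-theoretic input.

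\textbf{Odd spheres of dimension $\geq 5$.} Every manifold covered by $S^{2n-1}$ is a spherical space form $S^{2n-1}/\Gamma$, and lens spaces are among these, so it suffices to produce infinitely many isospectral non-isometric pairs of $(2n-1)$-dimensional lens spaces for each $n\geq 3$. I would reuse the construction in the proof of Proposition~\ref{prop:lenseverydimension}: fix a prime $r>n^2$ and set $a=(1,2,\dots,n-1,\,r-\tfrac{n(n-1)}{2})\in\Z^n$, which is univalent (hence hereditarily good, since $r$ is prime) and, being not self-reversing with entry-sum divisible by $r$, irreversible mod $r$ by Lemma~\ref{lemma:sumtozero}. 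By Theorem~\ref{thm:Doyle} the pair $\LMR(r,t,a)$, $\LMR(r,t,-a)$ is then isospectral (indeed $p$-isospectral for all $p$) and non-isometric for every $t$. These lens spaces all have dimension $2n-1$ but fundamental group of order $r^2t$, so letting $t$ range over the primes different from $r$ yields infinitely many pairwise distinct such pairs. This disposes of the sphere case.

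\textbf{Real Grassmannians.} Writing $X=\SO(m+n)/(\SO(m)\times\SO(n))$ with $m,n$ odd, I would follow the template of Section~\ref{sec:finitespectrum}: the spectrum of a quotient $\Gamma\backslash X$ is encoded by the multiplicities $n_\Gamma(\pi)=\dim V_\pi^\Gamma$ over the spherical representations of $(G,K)$, so two such quotients are isospectral exactly when these multiplicities agree for all spherical $\pi$. The manifolds properly covered by $X$ are quotients by the finite group coming from the deck/center structure classified by Wolf in \cite[Ch.~9]{Wolf-book} (using $\op{rank}G-\op{rank}K\leq 1$). Inside that finite group I would exhibit, via Gassmann-type almost conjugate data, non-conjugate subgroups whose fixed-space dimensions coincide on every spherical representation, and then introduce a scaling parameter, in the spirit of Theorem~\ref{thm:increasingdimension}, to generate an infinite family. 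The arithmetic hypothesis $m+n\in\{2k:k=5\text{ or }k\geq 7\}$ is precisely the condition under which the required non-conjugate configurations exist; checking that this numerical restriction is both sufficient and (outside it) obstructive is the main obstacle, and is where Ikeda's detailed computation of the spherical generating function is unavoidable.

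\textbf{Rigidity of $S^3$.} Here I would show that any two isospectral space forms $S^3/\Gamma_1$, $S^3/\Gamma_2$ are isometric. By Proposition~\ref{prop:Ikeda-sameorder} one has $|\Gamma_1|=|\Gamma_2|$, the two groups share the same multiset of element orders, and $\Gamma_1$ is cyclic iff $\Gamma_2$ is. I would then split along Wolf's classification of the finite subgroups of $\SO(4)$ acting freely on $S^3$. In the cyclic case the space forms are the $3$-dimensional lens spaces $L(q;1,s)$, and I would argue that the generating function \eqref{eq:F_Gamma-expression}, equivalently the congruence-lattice data of Proposition~\ref{prop:lens-isometries} with $n=2$, determines $s$ up to the equivalences $s\mapsto \pm s^{\pm 1}\pmod q$ — the low-dimensional rigidity that fails in higher dimensions but holds here. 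In the non-cyclic cases the coincidence of orders and of group structure forces $\Gamma_1,\Gamma_2$ into the same Wolf family, after which the same generating-function analysis yields conjugacy and hence isometry. The delicate point, and the genuine content of this part, is the dimension-three lens-space computation showing that the spectrum recovers the set $\{s,-s,s^{-1},-s^{-1}\}\bmod q$; this is exactly where dimension $3$ is special.
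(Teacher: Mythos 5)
First, a point of calibration: the paper does not prove this theorem at all — it is explicitly a summary of Ikeda's results, cited as \cite[Thm.~II]{Ikeda80_3-dimI}, \cite[Thm.~4]{Ikeda83}, and \cite[Thm.~7]{Ikeda97}, with the remark after the statement explaining how the pieces fit: Ikeda's infinitely many pairs of non-cyclic almost conjugate, non-conjugate subgroups of $\SO(2d)$ acting freely (for $d=5$ or $d\geq7$) handle all cases via Theorem~\ref{thm:Sunadamethod}, except $S^5$, $S^7$, $S^{11}$, which are covered by \cite{DeFordDoyle18}. Your part (a) is correct and complete, and is in fact a cleaner, more uniform route for the spheres than the paper's: by running the Proposition~\ref{prop:lenseverydimension} construction with $t$ ranging over primes $\neq r$ you get infinitely many non-isometric isospectral pairs in every odd dimension $\geq 5$ in one stroke, with no case split. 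This is a genuine (if modest) improvement over the paper's presentation for that case.

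Parts (b) and (c), however, contain real gaps rather than proofs. For the Grassmannians, everything hinges on exhibiting infinitely many pairs of almost conjugate, non-conjugate finite subgroups of $\SO(m+n)$ acting freely on $\frac{\SO(m+n)}{\SO(m)\times\SO(n)}$; you defer exactly this to unspecified ``Gassmann-type data,'' and this is the entire content of Ikeda's theorem. Worse, your proposed mechanism for generating an infinite family --- ``a scaling parameter in the spirit of Theorem~\ref{thm:increasingdimension}'' --- does not apply here: that theorem is a lens-space construction that \emph{increases the dimension} by padding the rotation parameters, whereas what you need is infinitely many distinct pairs covered by a \emph{fixed} Grassmannian, which requires infinitely many group pairs of growing order (this is what Ikeda actually constructs). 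Also note that freeness of the action on the Grassmannian is a different and more delicate condition than freeness on the sphere, and is where the parity hypothesis $mn\equiv 1\pmod 2$ enters; your sketch does not engage with it. For $S^3$, your plan is reasonable in outline, but the step you yourself flag as ``the delicate point'' --- that the spectrum of $L(q;1,s)$ recovers $\{\pm s^{\pm1}\}\bmod q$, plus the analogous rigidity for the non-cyclic Wolf families in $\SO(4)$ --- is precisely Ikeda's theorem in \cite{Ikeda80_3-dimI}; deferring it means part (c) is a citation, not a proof. In short: part (a) stands on its own, parts (b) and (c) reduce, as the paper does, to citing Ikeda, and the one new mechanism you propose for (b) would not work.
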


Wolf~\cite{Wolf01} extended Ikeda's isospectral constructions by adopting a much more general perspective.

\begin{remark}
Ikeda proved the existence of infinitely many pairs of (non-cyclic) almost conjugate but not conjugate subgroups of $\SO(2d)$ for $d=5$ or $d\geq7$ acting freely on $S^{2d-1}=\frac{\SO(2d)}{\SO(2d-1)}$ and $\frac{\SO(2d)}{\SO(m)\times\SO(2d-m)}$ for any $m$ odd. 
This proves Theorem~\ref{thm:Ikeda-existence} for all cases excepting $S^{5}$, $S^{7}$, and $S^{11}$, where he proved the existence of \emph{finitely many} isospectral lens spaces covered by each of them. 
The existence of infinitely many pairs of isospectral lens spaces of dimension $5$, $7$, and $11$ follows from \cite{DeFordDoyle18} (see also Section \ref{sec:towers}).
\end{remark}

\begin{remark}
Even dimensional Grassmannian spaces (i.e.\ $\frac{\SO(m+n)}{\SO(m)\times\SO(n)}$ with $mn$ even and all complex and quaternionic Grassmannian spaces) cover finitely many manifolds; see \cite[\S9.3]{Wolf-book} for the classification. 
Spectrally distinguishing them may a feasible but tedious achievement. 
A similar situation should occur with $\frac{\SO(2n)}{\Ut(n)}$, $\frac{\Sp(n)}{\Ut(n)}$, $\frac{\op{E}_7}{(\SU(8)/\{\pm I\})}$, $\frac{\op{E}_7}{\op{E}_6\cdot \op{T}^1}$ (see \cite[\S{}9.4]{Wolf-book}). 

A more challenging problem is to decide whether the symmetric space $\SU(3)/\SO(3)$ covers isospectral manifolds.
Its corresponding locally symmetric spaces were classified by Wolf~\cite[Lem.~9.6.3]{Wolf-book} and turn out to be infinitely many manifolds with cyclic fundamental groups. 
\end{remark}

The existence of isospectral covered manifolds is more feasible for compact irreducible symmetric spaces of \emph{group type}. 
These are of the form $\frac{K\times K}{\diag(K)}$, where $K$ is a compact simple Lie group and $\diag(K)=\{(k,k)\in K\times K: k\in K\}$.
We will abbreviate $\frac{K\times K}{K}=\frac{K\times K}{\diag(K)}$.
Alternatively, $\frac{K\times K}{K}$ is isometric to $K$ endowed with a bi-invariant metric. 

The following consequence of Sunada's method provides many examples.

\begin{proposition}\label{prop:grouptype}
Let $K$ be a compact connected simple Lie group and let $g_{0}$ denote any bi-invariant metric on $K$ (which is unique up to a positive multiple). 
If $\Gamma_1$ and $\Gamma_2$ are almost conjugate but non-conjugate finite subgroups of $K$, then $(K/\Gamma_1,g_1)$ and $(K/\Gamma,g_2)$ are isospectral and non-isometric, where $g_i$ denotes the Riemannian metric on $K/\Gamma_1$ induced by $g_0$ (i.e.\ $(K,g_0)\to (K/\Gamma_i,g_i)$ is a Riemannian cover). 
\end{proposition}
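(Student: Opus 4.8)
The plan is to realize each $(K/\Gamma_i,g_i)$ as a quotient of the group manifold $(K,g_0)$ by a finite group of isometries and to deduce isospectrality from the Sunada-type criterion in Theorem~\ref{thm:Sunadamethod}, exactly as for spherical orbifolds. Since $g_0$ is bi-invariant, the right translations $R_\gamma\colon x\mapsto x\gamma^{-1}$ are isometries and $\gamma\mapsto R_\gamma$ is a faithful homomorphism $K\hookrightarrow\Iso(K,g_0)$; I write $R(K)$ for its image and $L_\gamma\colon x\mapsto\gamma x$ for the corresponding left translations. Under this embedding the orbit space of $R(\Gamma_i)$ is precisely the coset space $K/\Gamma_i$ with the metric $g_i$, so everything reduces to studying the finite subgroups $R(\Gamma_1),R(\Gamma_2)$ of $\Iso(K,g_0)$.

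For the isospectral conclusion I would transport the almost conjugacy from $K$ up to $\Iso(K,g_0)$. A one-line computation gives $R_c R_\gamma R_c^{-1}=R_{c\gamma c^{-1}}$, so whenever $\gamma$ and $\phi(\gamma)$ are conjugate in $K$ the translations $R_\gamma$ and $R_{\phi(\gamma)}$ are conjugate already inside $R(K)\subset\Iso(K,g_0)$. Hence the bijection $\phi\colon\Gamma_1\to\Gamma_2$ witnessing almost conjugacy in the sense of Definition~\ref{def:almostconjugate} induces a bijection $R(\Gamma_1)\to R(\Gamma_2)$ realizing these as almost conjugate finite subgroups of $\Iso(K,g_0)$. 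Theorem~\ref{thm:Sunadamethod} then immediately yields that $K/\Gamma_1$ and $K/\Gamma_2$ are strongly isospectral; this part is routine.

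The real work is the non-isometry statement. As in the spherical case, $K/\Gamma_1$ and $K/\Gamma_2$ are isometric exactly when $R(\Gamma_1)$ and $R(\Gamma_2)$ are conjugate in the \emph{full} group $\Iso(K,g_0)$, so the main obstacle is that this group is strictly larger than $R(K)$: its identity component is $(K\times K)/Z(K)$ acting by $x\mapsto axb^{-1}$, while the remaining components are generated by $\Aut(K)$ and the inversion $\iota\colon x\mapsto x^{-1}$. I would argue in three stages. Conjugation within the identity component carries $R(\Gamma)$ to $R(b\Gamma b^{-1})$, so at that level conjugacy of $R(\Gamma_1),R(\Gamma_2)$ is exactly conjugacy of $\Gamma_1,\Gamma_2$ in $K$. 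Since $\iota R_\gamma\iota=L_\gamma$, any isometry involving $\iota$ sends $R(\Gamma_1)$ to a \emph{left}-translation subgroup, which for noncentral $\Gamma_i$ can never coincide with $R(\Gamma_2)$. Finally, conjugation by $\theta\in\Aut(K)$ sends $R(\Gamma)$ to $R(\theta(\Gamma))$. The upshot, and the subtle point to watch, is that $R(\Gamma_1)$ and $R(\Gamma_2)$ are conjugate in $\Iso(K,g_0)$ if and only if $\Gamma_1$ and $\Gamma_2$ are conjugate under the full automorphism group $\Aut(K)$, not merely under inner automorphisms. Thus to conclude non-isometry I would either read the hypothesis ``non-conjugate in $K$'' as non-conjugate under $\Aut(K)$, or, for the concrete almost conjugate pairs used in the applications, verify directly that no outer automorphism of $K$ identifies $\Gamma_1$ with $\Gamma_2$; this automorphism bookkeeping is where I expect the only genuine difficulty to lie.
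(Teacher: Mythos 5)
Your argument is correct and, on the isospectrality side, is exactly what the paper does: the paper's entire proof is a citation of Wolf's Proposition~2.10, which is the Sunada-type computation you reproduce (the right-translation groups $R(\Gamma_1)$, $R(\Gamma_2)$ are almost conjugate already inside $R(K)\subset\Iso(K,g_0)$, so Theorem~\ref{thm:Sunadamethod} gives strong isospectrality, and freeness of the action makes the quotients manifolds). Where you go beyond the paper is the non-isometry half, which the paper leaves entirely implicit; your description of $\Iso(K,g_0)$ and the computations $F_{a,b}R_\gamma F_{a,b}^{-1}=R_{b\gamma b^{-1}}$, $\theta R_\gamma \theta^{-1}=R_{\theta(\gamma)}$, $\iota R_\gamma \iota=L_\gamma$ are accurate and do reduce conjugacy of $R(\Gamma_1),R(\Gamma_2)$ in $\Iso(K,g_0)$ to conjugacy of $\Gamma_1,\Gamma_2$ under the full $\Aut(K)$. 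Two points to tighten. First, the implication ``isometric quotients $\Rightarrow$ deck groups conjugate in $\Iso(K,g_0)$'' needs the isometry to lift to $K$; for non-simply-connected $K$ run the argument on the universal cover with the preimages of the $\Gamma_i$, which costs nothing. Second, the caveat you flag is genuine and is not addressed in the paper: with ``non-conjugate'' read literally as non-conjugate in $K$, the statement does not exclude $\theta(\Gamma_1)=b\Gamma_2 b^{-1}$ for an outer automorphism $\theta$, which would make the quotients isometric. The proposition is only invoked for the explicit pairs of \eqref{eq:almostconjugate-in-SO(6)} in Theorem~\ref{thm1:symectricspaces}, where one can check directly that no automorphism of the ambient group identifies them, so the application stands; but your reading of the hypothesis as ``not related by any automorphism of $K$'' is the correct way to make the proposition itself airtight.
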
 

For a proof, see \cite[Prop.~2.10]{Wolf01} and take into account that $\Gamma_i$ acts freely on $K$.

\begin{proof}[Proof of Theorem~\ref{thm1:symectricspaces}]
By Proposition~\ref{prop:grouptype}, it is sufficient to show the existence of almost conjugate and non-conjugate subgroups in any simply connected compact simple Lie group $K$ non-isomorphic to $\SU(2)$, $\SU(3)$, $\Sp(2)$ or $\textrm{G}_2$. 

The pair $\Gamma_1,\Gamma_2\subset \SO(6)$ 
from \eqref{eq:almostconjugate-in-SO(6)} gives almost conjugate and non-conjugate subgroups of the universal cover $\Spin(6)\simeq \SU(4)$ of $\SO(6)$.
For any $n\geq7$, the image of $\Gamma_1,\Gamma_2$ under the canonical embedding $\Spin(6)\hookrightarrow \Spin(n)$ are still almost conjugate and non-conjugate in $\Spin(n)$.  
The same occurs for the well-known embeddings  
\begin{equation}
\begin{aligned}
\SU(4)&\hookrightarrow \SU(n)
	\quad\text{for all }n\geq5,\\
\SU(4)&\hookrightarrow \Ut(4)\hookrightarrow\Sp(n)
	\quad\text{for all }n\geq4,\\
\SO(6)&\hookrightarrow\SO(10)\hookrightarrow \op{E}_6\hookrightarrow \op{E}_7\hookrightarrow \op{E}_8,\\
\Spin(6)&\hookrightarrow\Spin(9)\hookrightarrow \op{F}_4,
\end{aligned}
\end{equation}
and the proof is complete.
\end{proof}

The symmetric space associated to $\SU(2)\simeq\Spin(3)\simeq\Sp(1)$ is isometric to $S^3$, hence it does not cover isospectral manifolds by Theorem~\ref{thm:Ikeda-existence}. 
It is worth to mention as a similar result that Vásquez~\cite{Vasquez18} proved that two almost conjugate subgroups in $\Spin(4)\simeq\SU(2)\times\SU(2)$ are necessarily conjugate.

\begin{question}
Are there almost conjugate and non-conjugate subgroups in $\SU(3)$ (resp.\ $\Sp(2)\simeq \Spin(5)$, $\Sp(3)$ and $\op{G}_2$)? 
\end{question}

\begin{question}
Which compact symmetric spaces of group type (not necessarily simply connected) cover isospectral manifolds. 
\end{question}

\begin{problem}
Study Question~\ref{prob:symspaces-manifolds} in the context of orbifolds covered by compact symmetric spaces. 
\end{problem}

\section{Open problems and questions}\label{sec:openquestions}

In this section we discuss further problems and questions, in addition to those included in the previous sections.

%
%
%
%
%
%
%
%

\subsection{Constructing large families of spherical space forms}

Call a set of Riemannian manifolds an {\it isospectral set} if the manifolds in the set are pairwise isospectral and non-isometric. In \cite{BGG} Brooks, Gornet, and Gustafson used Sunada's method in order to construct isospectral sets of hyperbolic surfaces of arbitrarily large cardinality. In particular, they constructed an infinite sequence of natural numbers $g_i$ such that for each $i$ there is an isospectral set of genus $g_i$ hyperbolic surfaces of cardinality at least $g_i^{c \log(g_i)}$, for some constant $c$ which does not depend on $i$. We note that these isospectral sets are the largest known for hyperbolic surfaces. 

The Brooks-Gornet-Gustafson construction was subsequently generalized by McReynolds~\cite{McReynolds14} to higher dimensional real hyperbolic spaces, to the complex hyperbolic $2$-space, and to the symmetric spaces of arbitrary non-compact simple Lie groups. Later work of Belolipetsky and the second author \cite{BL} extended the Brooks-Gornet-Gustafson construction to simple Lie groups of real rank at least $2$.

In light of the aforementioned work it is natural to ask for the maximal size of an isospectral set of spherical manifolds or orbifolds. Every finite group admits an orthogonal representation, hence the results mentioned above (which all employ Sunada's method) can be modified so as to produce isospectral sets of spherical orbifolds with arbitrarily large cardinalities. It should be noted however, that the dimension of the manifolds in these isospectral sets will go to infinity as the cardinality does.

\begin{problem}\label{problem:problemconstructinghugefamilies}
Fix a dimension $d$. What is the maximal cardinality of an isospectral set of spherical space forms 
of volume $x$ and dimension $d$?
\end{problem}

This problem seems particularly tractable in the setting of lens spaces. It seems reasonable, for example, to expect that one could modify Ikeda's proof \cite{Ikeda80_isosp-lens} of the existence of pairs of isospectral non-isometric lens spaces in order to obtain isospectral sets of larger cardinality.

\subsection{Upper bounds on the cardinality of an isospectral set}
In this section we discuss a problem which serves as a natural complement to Problem \ref{problem:problemconstructinghugefamilies}. Given a hyperbolic surface $S$, how many hyperbolic surfaces are there that are isospectral to $S$ but not isometric to it? In other words, what is the maximal cardinality of an isospectral set containing $S$? The Brooks-Gornet-Gustafson construction shows that in general, if $S$ has genus $g$, then there may be as many as $g^{c \log(g)}$ other hyperbolic  surfaces isospectral to $S$ but not isometric to it. The first upper bound for this quantity is due to Buser~\cite{Buser92}, who showed that if $g$ denotes the genus of $S$, then there are at most $e^{720 g^2}$ hyperbolic surfaces that are isospectral to $S$ but not isometric to it. This result was later improved to $e^{cg\log(g)}$ (for some universal constant $c$) by Parlier~\cite{Parlier}.

\begin{problem}
Let $M$ be a spherical orbifold of volume $V$. What is an upper bound for the cardinality of an isospectral set of spherical orbifolds which contains $M$?
\end{problem}

\subsection{Wolpert's genericity results}
Before Vign{\'e}ras~\cite{Vigneras80} constructed the first examples of isospectral Riemann surfaces, Wolpert~\cite{Wolpert77-Riemannsurfaces, Wolpert79-Riemannsurfaces} proved that a generic Riemann surface of genus $g\geq2$ is spectrally unique within the moduli space $\mathcal M_g$ of isometry classes of Riemann surfaces of genus $g$; that is, a generic Riemann surface is not isospectral to any non-isometric Riemann surface of the same genus. 

\begin{theorem}[Wolpert~\cite{Wolpert79-Riemannsurfaces}]
For each $g\in\N$, there is a dense subset $\mathcal M_g^\bullet$ of $\mathcal M_g$ satisfying that for any $S\in\mathcal M_g^\bullet$ one has that $\Spec(S')\neq\Spec(S)$ for all $S'\neq S$ in $\mathcal M_g$. 
\end{theorem}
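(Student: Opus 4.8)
The plan is to move the whole problem to Teichmüller space and to exhibit the set of surfaces admitting an isospectral, non-isometric partner as a countable union of proper real-analytic subvarieties; its complement is then dense (in fact residual) by the Baire category theorem. I would start with two standard reductions. First, by Huber's theorem (equivalently, the Selberg trace formula) two compact hyperbolic surfaces are Laplace-isospectral if and only if they have the same length spectrum, so I may replace $\Spec$ throughout by the multiset of lengths of closed geodesics. Second, I would work on the Teichmüller space $\mathcal T_g$, a connected real-analytic manifold of dimension $6g-6$ on which the mapping class group $\mathrm{Mod}_g$ acts properly discontinuously by real-analytic diffeomorphisms, with $\mathcal M_g=\mathcal T_g/\mathrm{Mod}_g$. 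The key structural input is that for each isotopy class $\gamma$ of closed curve on $\Sigma_g$ the geodesic length function $\ell_\gamma$ is real-analytic on $\mathcal T_g$, and that a suitable finite family $\gamma_1,\dots,\gamma_m$ yields a proper real-analytic embedding $L=(\ell_{\gamma_1},\dots,\ell_{\gamma_m})\colon\mathcal T_g\hookrightarrow\R^m$.

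Next I would encode isospectrality analytically. For $\op{Re}(s)$ large the series $Z_X(s)=\sum_\gamma e^{-s\,\ell_\gamma(X)}$ converges (by the prime geodesic theorem) and, for fixed $s$, is a real-analytic function of $X\in\mathcal T_g$; since distinct lengths give linearly independent exponentials, $X$ and $Y$ are isospectral exactly when $Z_X(s)=Z_Y(s)$ for all such $s$. Hence the isospectral locus
\[
\mathcal I=\{(X,Y)\in\mathcal T_g\times\mathcal T_g : X\ \text{and}\ Y\ \text{are isospectral}\}
\]
is a real-analytic subvariety of $\mathcal T_g\times\mathcal T_g$. It contains the trivial components $\mathcal I_{\mathrm{triv}}=\bigcup_{\phi\in\mathrm{Mod}_g}\mathrm{graph}(\phi)$, since $\phi\cdot X$ is isometric, hence isospectral, to $X$; these contribute nothing to the bad set because they give $[X]=[Y]$ in $\mathcal M_g$. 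I would then decompose $\mathcal I$ into its (countably many) irreducible components and let $\mathcal I_{\mathrm{bad}}$ be the union of those not contained in $\mathcal I_{\mathrm{triv}}$, so that the bad set in moduli space is the image of $\pr_1(\mathcal I_{\mathrm{bad}})$ under $\mathcal T_g\to\mathcal M_g$.

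The analytic skeleton is a dimension count. By the finiteness of isospectral sets of hyperbolic surfaces (McKean, and Wolpert), every fiber of $\pr_1\colon\mathcal I\to\mathcal T_g$ is discrete (zero-dimensional), being a finite union of $\mathrm{Mod}_g$-orbits. Consequently each irreducible component $W$ is either non-dominant — in which case $\pr_1(W)$ lies in a proper real-analytic subvariety of $\mathcal T_g$ — or dominant, in which case $\pr_1|_W$ is generically a local real-analytic diffeomorphism and $W$ is locally the graph of a real-analytic map $X\mapsto Y(X)$. For the non-dominant components the desired conclusion is automatic, so the entire difficulty is concentrated in the dominant components of $\mathcal I_{\mathrm{bad}}$; this is the step I expect to be the main obstacle.

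On a dominant bad component there is a multivalued real-analytic map $X\mapsto Y(X)$ preserving the full length spectrum, and I must rule this out generically. The idea is to read off, for $X$ in a small open set on which all relevant topological data are locally constant, the isotopy types of the successive shortest geodesics of $X$ and of $Y(X)$; since $L$ can be built from such systolic length functions, matching of the ordered length spectrum yields identities $\ell_{\gamma_i}(X)=\ell_{\delta_i}(Y(X))$ relating two length-coordinate systems. The crux is a combinatorial rigidity statement: one must show that for generic $X$ the configuration $(\delta_i)$ of short geodesics on $Y$ is carried to $(\gamma_i)$ by a single mapping class $\phi$ — equivalently, that the intersection/pants pattern of the short geodesics is itself recoverable from the spectrally available data $\ell_\gamma,\ \ell_{\gamma_i\gamma_j},\dots$. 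Granting this, injectivity of $L$ forces $Y(X)=\phi\cdot X$, so $W\subseteq\mathcal I_{\mathrm{triv}}$, a contradiction; hence $\mathcal I_{\mathrm{bad}}$ has no dominant components and $\pr_1(\mathcal I_{\mathrm{bad}})$ is a countable union of proper (closed, nowhere dense) real-analytic subvarieties. Taking $\mathcal M_g^\bullet$ to be the complement of its image in $\mathcal M_g$ — a residual, hence dense, set — completes the proof.
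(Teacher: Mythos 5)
The paper does not actually prove this statement: it is quoted from Wolpert's 1979 Annals paper \cite{Wolpert79-Riemannsurfaces} and used only as motivation for the genericity conjecture about lens spaces, so there is no in-paper proof to compare against. Judged on its own terms, your outline reproduces the correct global skeleton of Wolpert's argument: pass to the length spectrum via Huber/Selberg, realize the isospectral correspondence as a real-analytic subvariety $\mathcal I\subset\mathcal T_g\times\mathcal T_g$, use finiteness of isospectral sets to see that $\pr_1$ has discrete fibers, and conclude by a Baire category argument once every ``bad'' component is shown to be non-dominant.

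However, the proof as written has a genuine gap, and you have located it yourself: the entire content of the theorem is concentrated in the step you introduce with ``Granting this.'' On a dominant component you have, locally, a real-analytic map $X\mapsto Y(X)$ such that the multisets $\{\ell_\gamma(X)\}$ and $\{\ell_\delta(Y(X))\}$ coincide; what you must show is that this purely numerical bijection between closed geodesics is induced by a mapping class, i.e., that unmarked iso-length-spectrality on an open set upgrades to marked iso-length-spectrality. Your proposed mechanism --- read off the isotopy classes of the successive shortest geodesics and assert that their intersection/pants pattern is ``recoverable from the spectrally available data'' --- is exactly what is not automatic: equality of ordered length spectra gives a length-preserving bijection of geodesics but says nothing a priori about whether it matches simple curves with simple curves, preserves intersection numbers, or respects any topological structure, and Vign\'eras' examples show that such a bijection need not come from a homeomorphism in general. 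Any argument must therefore genuinely exploit genericity and analyticity, and Wolpert's actual proof spends most of its length on precisely this point (analytic continuation of the correspondence combined with a delicate analysis in regions of Teichm\"uller space where the short geodesics are controlled). Without an honest proof of your ``combinatorial rigidity statement,'' the proposal is a plausible strategy rather than a proof. The remaining issues --- global irreducible decomposition of real-analytic sets, and the fact that $\pr_1$-images are in general only subanalytic rather than analytic --- are technical and repairable.
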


In the moduli space $\mathcal T_n$ of $n$-dimensional flat tori, he proved the analogous result. 

\begin{theorem}[Wolpert~\cite{Wolpert78-tori}]
For each $n\in\N$, there is a dense subset $\mathcal T_n^\bullet$ of $\mathcal T_n$ satisfying that for any $T\in\mathcal T_n^\bullet$ one has that $\Spec(T')\neq\Spec(T)$ for all $T'\neq T$ in $\mathcal T_n$. 
\end{theorem}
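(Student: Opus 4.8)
The plan is to realize $\mathcal T_n$ as the real-analytic orbifold $\GL_n(\Z)\backslash \mathcal P_n$, where $\mathcal P_n$ is the cone of positive-definite symmetric $n\times n$ matrices (Gram matrices of lattices $\Lambda\subset\R^n$), and to exploit the classical fact that $\Spec(\R^n/\Lambda)=\{4\pi^2\|w\|^2:w\in\Lambda^*\}$ counted with multiplicities, where $\Lambda^*$ is the dual lattice. Thus two flat tori are isospectral if and only if their dual lattices have the same multiset of squared lengths, equivalently the same theta function $\theta_{\Lambda^*}(t)=\sum_{w\in\Lambda^*}e^{-t\|w\|^2}$. Since $\Lambda\mapsto\Lambda^*$ is a real-analytic involution of $\mathcal T_n$, it suffices to prove the genericity statement for length spectra of lattices, for which the relevant invariants $A\mapsto m^{\mathsf T}Am$ ($m\in\Z^n$) are real-analytic functions on $\mathcal P_n$.

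First I would establish local rigidity: along any continuous path $A_s$ of Gram matrices with constant length multiset, each individual value $f_m(s)=m^{\mathsf T}A_s m$ is continuous and takes values in the fixed discrete spectrum, hence is constant; since the values $\{m^{\mathsf T}Xm:m\in\Z^n\}$ determine the symmetric matrix $X$, we conclude $A_s\equiv A_0$. Therefore the level sets of the spectrum map $\mathcal L\colon\mathcal T_n\to\{\text{spectra}\}$ are discrete. Next I would prove that each isospectral set $\mathcal L^{-1}(\mathcal L(T))$ is \emph{finite}: an isospectral partner's dual lattice has the same covolume (Weyl's law) and the same systole (the least positive eigenvalue), so by Mahler's compactness criterion all partners range over a fixed compact region of $\mathcal T_n$; a discrete subset of a compact set is finite. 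In particular $\mathcal L$ is a finite-to-one real-analytic map with discrete fibers.

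The remaining and decisive step is to upgrade finiteness to \emph{generic injectivity} of $\mathcal L$, i.e.\ to produce a dense subset $\mathcal T_n^\bullet$ on which the fiber is a single point. I would organize the isospectral correspondence $R=\{(T,T'):\Spec(T)=\Spec(T')\}\subset\mathcal T_n\times\mathcal T_n$, a real-analytic set whose two projections are finite-to-one, and decompose its off-diagonal part $R\smallsetminus\Delta$ into the countably many strata indexed by the combinatorial ``matching type'' pairing the short vectors of $\Lambda^*$ with those of $(\Lambda')^*$. For a lattice with all short squared-lengths distinct (apart from the forced $w\leftrightarrow -w$) and with no accidental additive coincidences, a reduced basis $w_1,\dots,w_n$ can be selected from the shortest dual vectors using only the list of eigenvalues, and the inner products $\langle w_i,w_j\rangle$ are then recovered from the uniquely identifiable eigenvalues $\|w_i\pm w_j\|^2$; this reconstructs the Gram matrix of $\Lambda^*$ up to $\Ot(n)$ and hence recovers $T$. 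I would define $\mathcal T_n^\bullet$ by these genericity conditions and show that its complement is a countable union of proper real-analytic subvarieties, each the zero set of a nontrivial analytic difference of length functions, hence meager; density of $\mathcal T_n^\bullet$ follows.

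The hard part is exactly this passage from finiteness to generic injectivity, namely ruling out open families of non-isometric isospectral tori, since a finite real-analytic map can be genuinely multi-sheeted over an open set. Concretely, the obstacle is to prove that the reconstruction above is \emph{forced} off a nowhere-dense set: one must verify that the accidental-coincidence loci, where the identification of the $\|w_i\pm w_j\|^2$ among the eigenvalues becomes ambiguous or where a non-isometric matching type can be realized, are proper analytic subvarieties rather than full-dimensional. This is the genuinely nontrivial analytic input supplied by Wolpert; the rigidity and finiteness steps above are comparatively formal.
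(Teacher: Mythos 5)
The paper does not actually prove this statement: it is quoted, with attribution, from Wolpert's 1978 paper \cite{Wolpert78-tori} as background for the lens-space conjecture, so there is no in-paper argument to compare yours against. Judged on its own terms, the first two stages of your proposal are sound and standard: the reduction to length spectra of dual lattices via the theta function, the local rigidity argument (continuity of $s\mapsto m^{\mathsf T}A_s m$ into a discrete set plus polarization), and the finiteness of each isospectral set via Weyl's law, the systole, and Mahler compactness.

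There is, however, a genuine gap at the decisive step, and you acknowledge as much. The passage from finiteness to generic injectivity is only sketched, and the sketch does not work as written. Your reconstruction requires identifying, inside the \emph{unordered} multiset of eigenvalues, which entries equal $\|w_i\|^2$ and which equal $\|w_i\pm w_j\|^2$ for a reduced basis $w_1,\dots,w_n$ of the dual lattice. Genericity (all values $m^{\mathsf T}Am$ distinct for distinct $\pm m$) guarantees that each listed value is attained by a unique vector pair, but it does not let you read off \emph{which} vector from the value; that labelling is exactly the information the spectrum forgets, so the claim that the Gram matrix ``is then recovered'' is unsupported. (There is the further wrinkle that for $n\geq 5$ the successive minima need not form a basis.) Likewise, the assertion that the loci where a non-isometric matching type can be realized are \emph{proper} real-analytic subvarieties --- which is the entire content of Wolpert's theorem --- is stated but not established: one must show, for each length-preserving bijection of $\Z^n$ not induced by $\GL_n(\Z)$, that its realizability imposes a nontrivial analytic condition on the form (Wolpert does this by an analytic-continuation argument along one-parameter deformations), and properness of these projected strata does not follow formally from finiteness of the fibers. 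Since you explicitly defer ``the genuinely nontrivial analytic input'' to Wolpert, what you have is a correct reduction and an outline, not a proof.
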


It is natural to ask if a similar situation occurs for lens spaces. 

\begin{problem}
Is a ``generic'' lens space spectrally unique?
\end{problem}

Note that there is not a direct analogy to the case of Riemann surfaces and flat tori due to the absence of a natural topology on the space of lens spaces. In what follows we formulate a possible extension of Wolpert's results and provide numerical evidence for its validity.

For positive integers $n,q$, let us denote by $\lens(n,q)$ the isometry classes of $(2n-1)$-dimensional lens spaces with fundamental group of order $q$.
We set 
\begin{equation}
\unique(n,q) =\{L\in \lens(n,q): \Spec(L)\neq \Spec(L')\quad\forall \,L'\neq L\text{ in }\lens(n,q)\}
.
\end{equation}
In words, $\unique(n,q)$ is the subset of $\lens(n,q)$ given by lens spaces that are spectrally unique within $\lens(n,q)$.  

The space of all (isometry classes) of $(2n-1)$-dimensional lens spaces is $\bigcup_{q\geq1}\lens(n,q)$. 
Now, for a positive number $x$, it is natural to ask whether the density
\begin{equation}
\mathcal U_n(x): =\frac{\sum\limits_{q\leq x}\#\unique(n,q)}{\sum\limits_{q\leq x}\#\lens(n,q)}
\end{equation}
of the set of spectrally unique lens spaces with fundamental groups of order $\leq x$ into the set of lens spaces with fundamental groups of order $\leq x$ is close to $1$. 

\begin{conjecture}
One has $\displaystyle \lim_{x\to\infty} \mathcal U_n(x)=1$ for all $n\in\N$. 
\end{conjecture}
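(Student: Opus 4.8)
The plan is to recast the conjecture as a counting problem for congruence lattices and then to show that the \emph{exceptional} lens spaces---those admitting an isospectral twin---form a set of density zero. First I would record that, by \eqref{eq:F_L-thetafunction}, two lens spaces $L=L(q;s)$ and $L'=L(q;s')$ in $\lens(n,q)$ are isospectral if and only if their congruence lattices carry the same one-norm theta series, i.e.
\begin{equation}
N_{\mathcal L(q;s)}(k)=N_{\mathcal L(q;s')}(k)\qquad\text{for all }k\ge0.
\end{equation}
Thus $L$ lies in $\unique(n,q)$ precisely when no non-isometric class in $\lens(n,q)$ shares its one-norm theta series, and the conjecture asserts that the proportion of such classes is asymptotically full.

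Next I would estimate the denominator. By Proposition~\ref{prop:lens-isometries}, the isometry classes in $\lens(n,q)$ correspond to multisets of $n$ units modulo the signed-permutation action together with the diagonal scaling by $(\Z/q\Z)^\times$, so that $\#\lens(n,q)\asymp_n \varphi(q)^{n-1}$, and hence
\begin{equation}
T(x):=\sum_{q\le x}\#\lens(n,q)\asymp_n x^{n},
\end{equation}
since $\sum_{q\le x}\varphi(q)^{n-1}\asymp_n x^{n}$. To prove the conjecture it then suffices to bound the numerator of $1-\mathcal U_n(x)$, namely $B(x):=\sum_{q\le x}\big(\#\lens(n,q)-\#\unique(n,q)\big)$, by $o(x^{n})$. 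Since each coincidence class of size $m\ge2$ contributes $m$ to $B(x)$ and $\binom m2\ge m/2$ pairs to the pair-count, we have $B(x)\le 2P(x)$; thus it is enough to show that the number $P(x)$ of isospectral, non-isometric pairs with $q\le x$ satisfies $P(x)=o(x^{n})$.

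For the pair count I would extract a complete system of spectral invariants from \eqref{eq:F_L-expression}: the principal part of $F_L$ at each root of unity $\xi_q^{m}$ is spectrally determined and records, for every scaling $h$, the multiplicities $\#\{i:\,hs_i\equiv\pm m\pmod q\}$; equivalently, the full theta series encodes all additive relations $\sum_i a_i s_i\equiv0\pmod q$ among the parameters. I would then aim for a \emph{generic rigidity} statement: outside a locus cut out by finitely many nontrivial congruence conditions (coincidences $hs_i\equiv\pm hs_j$, or relations $\sum_i a_is_i\equiv0$ of bounded $\norma{a}$ beyond the forced ones), these invariants determine the multiset $\{\pm s_i\}$ up to scaling, hence $L$ up to isometry. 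Bounding $P(x)$ would then reduce to counting tuples $s$ satisfying at least one such relation; each relation of bounded height is a nontrivial congruence thinning the count by a factor tending to $0$, and summing over $q\le x$ should produce $o(x^{n})$.

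The hard part is precisely this rigidity step. There is no known clean criterion for when two congruence lattices share a one-norm theta series, and genuine infinite families of coincidences exist---Ikeda's examples and the DeFord--Doyle construction of Theorem~\ref{thm:Doyle}---so one cannot hope to exclude twins outright, but only to show that their parameters are arithmetically constrained and hence thin. Turning this heuristic into a theorem seems to require an effective ``the one-norm theta series determines the lattice for generic $s$'' estimate, plausibly via a large-sieve or additive-energy bound applied uniformly over $q\le x$; producing such a bound, and in particular controlling the higher-height relations that remain invisible to any fixed finite portion of the spectrum, is the main obstacle.
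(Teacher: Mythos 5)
This statement is a \emph{conjecture} in the paper: the authors do not prove it, they only supply numerical evidence (Table~\ref{table}) and explicitly pose the key missing ingredient as an open problem (Problem~\ref{prob:conditionlensdisparate}, asking for conditions on $n,q$ under which $\unique(n,q)=\lens(n,q)$). So there is no proof in the paper to compare yours against, and your proposal is not a proof either --- as you yourself acknowledge in the final paragraph.

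Your peripheral reductions are sound: by \eqref{eq:F_L-thetafunction} isospectrality within $\lens(n,q)$ is indeed equivalent to equality of the one-norm counting functions $N_{\mathcal L}(k)$ of the congruence lattices; the estimate $\sum_{q\le x}\#\lens(n,q)\asymp_n x^n$ is correct in order of magnitude; and the reduction to showing that the number of isospectral non-isometric pairs is $o(x^n)$ is a legitimate simplification. But the entire mathematical content of the conjecture is concentrated in the ``generic rigidity'' step, and there you have only a heuristic. The specific gap is this: you propose to cover the exceptional locus by ``finitely many nontrivial congruence conditions'' of bounded height, but there is no argument that a coincidence of the \emph{full} theta series forces a low-height coincidence of parameters. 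The one-norm theta series is an infinite amount of data, and the known infinite families of coincidences (Ikeda's examples, the DeFord--Doyle construction of Theorem~\ref{thm:Doyle}, and the dimension-raising construction of Theorem~\ref{thm:increasingdimension}) show that twins are not cut out by any single obvious congruence; conversely, Table~\ref{table:existencelens} shows the pattern of which $(n,q)$ admit twins is erratic and no clean criterion is known. Until one proves an effective statement of the form ``outside a set of tuples $s$ of density $o(1)$ uniformly in $q\le x$, the function $k\mapsto N_{\mathcal L(q;s)}(k)$ determines $s$ up to the isometry action of Proposition~\ref{prop:lens-isometries},'' the argument does not close. That statement is precisely what is open, so your write-up should be presented as a reduction of the conjecture to a quantitative rigidity problem, not as a proof.
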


\begin{table}
\caption{Density of spectrally unique $(2n-1)$-dimensional lens spaces for $n=3$, $4$, $5$, $6$, $7$.}\label{table}
$
\begin{array}{ccccccc}
n&x & \sum\limits_{q\leq x}\#\unique(n,q)& \sum\limits_{q\leq x}\#\lens(n,q)&\mathcal U_n(x) \\ \hline \hline \rule{0pt}{14pt}
3
&50 & 40 & 990 & 0.95960 \\
&100 & 64 & 6680 & 0.99042 \\
&150 & 83 & 21881 & 0.99621 \\
&200 & 119 & 51580 & 0.99769 \\
&250 & 131 & 97546 & 0.99866 \\
&300 & 183 & 167856 & 0.99891 \\
\hline \rule{0pt}{14pt}
4
&30 & 47 & 693 & 0.93218 \\
&60 & 138 & 7966 & 0.98268 \\
&90 & 202 & 36699 & 0.99450 \\
&120 & 228 & 107094 & 0.99787 \\
&150 & 268 & 253189 & 0.99894 \\
\hline\rule{0pt}{14pt}
5
&10 & 0 & 28 & 1.0000 \\
&20 & 23 & 397 & 0.94207 \\
&30 & 74 & 1806 & 0.95903 \\
&40 & 127 & 5456 & 0.97672 \\
&50 & 197 & 17332 & 0.98863 \\
&60 & 255 & 37137 & 0.99313 \\
&70 & 345 & 71449 & 0.99517 \\
\hline \rule{0pt}{14pt}
6
&10 & 0 & 37 & 1.0000 \\
&20 & 14 & 801 & 0.98252 \\
&30 & 118 & 4640 & 0.97457 \\
&40 & 199 & 16497 & 0.98794 \\
&50 & 297 & 66751 & 0.99555 \\
&60 & 432 & 163935 & 0.99736 \\
\hline\rule{0pt}{14pt}
7
&10 & 0 & 41 & 1.0000 \\
&20 & 9 & 1501 & 0.99400 \\
&30 & 174 & 11188 & 0.98445 \\
&40 & 358 & 46750 & 0.99234 \\
&50 & 466 & 239345 & 0.99805 \\
\end{array}
$
\end{table}

Table~\ref{table} provides, for small values of $x$, numerical calculations of $\mathcal U_n(x)$ for $3\leq n\leq 7$.

A first step towards proving the above conjecture could be to show, for a fixed dimension $2n-1$, infinitely many values of $q\in\N$ such that no isospectrality can occur within $\lens(n,q)$, that is, $\unique(n,q)=\lens(n,q)$.

\begin{problem}\label{prob:conditionlensdisparate}
Provide conditions on $n$ and $q$ such that $\unique(n,q)=\lens(n,q)$; that is, for which no (non-trivial) isospectrality is possible among $(2n-1)$-dimensional lens spaces with volume $\frac{\vol(S^{2n-1})}{q}$. 
\end{problem}

\subsection{The length spectra of spherical space forms}

The length spectrum of a hyperbolic manifold, or more generally of a closed Riemannian manifold, is the set of lengths of closed geodesics on the manifold. The length spectrum has been an extremely fruitful area of research and is closely related to the Laplace eigenvalue spectrum. For example, the Selberg Trace Formula implies that two hyperbolic surfaces are (Laplace) isospectral if and only if they have the same length spectrum (see e.g.\ \cite[Chapter 7]{Buser92}). The work of Duistermaat and Guillemin and Duistermaat, Kolk, and Varadarajan shows that in the setting of compact locally symmetric manifolds of nonpositive curvature, the Laplace spectrum determines the length spectrum. (See also \cite[Theorem 10.1]{PR}.) 

\begin{question}
What is the relationship between the Laplace spectrum and the length spectrum of a spherical space form?
\end{question}

Some of the most interesting work concerning the length spectra of locally symmetric spaces of nonpositive curvature has been done in the arithmetic case and concerns the notion of {\it commensurability}. Recall that two Riemannian manifolds are said to be commensurable if they have a common finite degree covering space.  It is a result of Reid~\cite{Reid} that arithmetic hyperbolic surfaces with the same length spectra are necessarily commensurable. This result was extended to arithmetic hyperbolic $3$-manifolds by Chinburg, Hamilton, Long, and Reid~\cite{CHLR}, and to a very broad class of arithmetic locally symmetric spaces by Prasad and Rapinchuk~\cite{PR}. More recently, the second author, together with McReynolds, Pollack and Thompson, has proven (see \cite{LMPT}) that two incommensurable arithmetic hyperbolic manifolds of dimension $2$ or $3$ must have length spectra that disagree for some geodesic length bounded by an explicit function of the manifolds' volumes.

Although all spherical space forms are trivially commensurable (they are all covered with finite degree by the sphere $S^d$), one might instead consider the notion of {\it quotient commensurable} manifolds. Two Riemannian manifolds are said to be quotient commensurable if they share a common, finite degree quotient manifold. If $S^d/\Gamma_1$ and $S^d/\Gamma_2$ are spherical space forms, then they are quotient commensurable if and only if $\Gamma_1$ and $\Gamma_2$ both have finite index inside the group $G=\langle \Gamma_1, \Gamma_2\rangle$. 

\begin{question}
If two spherical space forms have the same length spectrum, must they be quotient commensurable?
\end{question}

\subsection{Isospectral but not strongly isospectral hyperbolic manifolds}

We end the article with a question that goes in the reverse direction in the sense that it concerns a result in the compact setting which is open in the non-compact one. 

We mentioned after Theorem~\ref{thm:Sunadamethod} that Sunada's method always produces \emph{strongly isospectral} manifolds. 
In particular, in addition to the Laplace-Beltrami operator (acting on $0$-forms), the corresponding Hodge-Laplace operators acting on $p$-forms are isospectral, for each $p$. 

It has been known for a while that Ikeda's examples of lens spaces cannot be constructed via Sunada's method (see \cite{Chen}). 
Miatello, Rossetti and the first author proved  that strongly isospectral lens spaces are necessarily isometric (see \cite[Prop.~7.2]{LMR-onenorm}), so isospectral and non-isometric lens spaces, including Ikeda's examples, are not strongly isospectral.

\begin{question}
Are there isospectral hyperbolic manifolds that are not strongly isospectral? 
\end{question} 

Furthermore, there are lens spaces $p$-isospectral for all $p$, and not strongly isospectral (see \cite{LMR-onenorm} and \cite{DeFordDoyle18}). 

\begin{question}
Are there hyperbolic manifolds $p$-isospectral for all $p$ that are not strongly isospectral? 
\end{question} 

Doyle and Rossetti have conjectured that the answer is negative (see \cite[\S9]{DR}).

\bibliographystyle{plain}

\end{document}